\newcommand{\todoc}[2][]{\todo[color=red!20!white,#1]{Cs: #2}}
\newcommand{\hide}[1]{}
\newtheorem{theorem}{Theorem}
\newtheorem{lemma}[theorem]{Lemma}
\newtheorem{proposition}[theorem]{Proposition}
\numberwithin{theorem}{section}
\newtheorem{corollary}[theorem]{Corollary}
\theoremstyle{remark}
\theoremstyle{definition}
\def\ddefloop#1{\ifx\ddefloop#1\else\ddef{#1}\expandafter\ddefloop\fi}
\def\ddef#1{\expandafter\def\csname b#1\endcsname{\ensuremath{\mathbf{#1}}}}
\def\ddef#1{\expandafter\def\csname bb#1\endcsname{\ensuremath{\mathbb{#1}}}}
\def\ddef#1{\expandafter\def\csname c#1\endcsname{\ensuremath{\mathcal{#1}}}}
\def\ddef#1{\expandafter\def\csname v#1\endcsname{\ensuremath{\boldsymbol{#1}}}}
\def\ddef#1{\expandafter\def\csname v#1\endcsname{\ensuremath{\boldsymbol{\csname #1\endcsname}}}}
\newcommand\veps{\ensuremath{\varepsilon}}
\newcommand\eps{\ensuremath{\epsilon}}
\renewcommand\t{{\ensuremath{\scriptscriptstyle{\top}}}}
\DeclareMathOperator{\Diag}{Diag}
\DeclareMathOperator{\var}{var}
\DeclareMathOperator{\Var}{Var}
\newcommand\wt{\ensuremath{\widetilde}}
\newcommand\wh{\ensuremath{\widehat}}
\renewcommand\v{\ensuremath{\boldsymbol}}
\newcommand\parens[1]{(#1)}
\newcommand\norm[1]{\|#1\|}
\newcommand\braces[1]{\{#1\}}
\newcommand\brackets[1]{[#1]}
\newcommand\ceil[1]{\lceil#1\rceil}
\newcommand\abs[1]{|#1|}
\newcommand\dotp[1]{\langle #1 \rangle}
\newcommand\Parens[1]{\left(#1\right)}
\newcommand\Norm[1]{\left\|#1\right\|}
\newcommand\Braces[1]{\left\{#1\right\}}
\newcommand\Brackets[1]{\left[#1\right]}
\newcommand\Ceil[1]{\left\lceil#1\right\rceil}
\newcommand\Floor[1]{\left\lfloor#1\right\rfloor}
\newcommand\Abs[1]{\left|#1\right|}
\newcommand\Ind[1]{\ensuremath{\mathds{1}\left\{#1\right\}}}
\newcommand\pimin{\ensuremath{\pi_{\star}}}
\newcommand\gap{\ensuremath{\gamma_{\star}}}
\newcommand\slem{\ensuremath{\lambda_{\star}}}
\newcommand\hatpimin{\ensuremath{\hat\pi_{\star}}}
\newcommand\hatgap{\ensuremath{\hat\gamma_{\star}}}
\newcommand\Geom{\ensuremath{\operatorname{Geom}}}
\newcommand\Sym{\ensuremath{\operatorname{Sym}}}
\newcommand\errm{\ensuremath{\v{\cE}_{\vM}}}
\newcommand\errp{\ensuremath{\v{\cE}_{\vpi}}}
\newcommand\errP{\ensuremath{\v{\cE}_{\vP}}}
\newcommand\errpil{\ensuremath{\v{\cE}_{\vpi,1}}}
\newcommand\errpir{\ensuremath{\v{\cE}_{\vpi,2}}}
\newcommand\tv{\ensuremath{\operatorname{tv}}}
\newcommand\tmix{\ensuremath{t_{\operatorname{mix}}}}
\newcommand\trelax{\ensuremath{t_{\operatorname{relax}}}}
\newcommand{\iset}[1]{[#1]}
\newcommand{\opencloseint}[1]{\ensuremath{(#1]}}
\newcommand{\Dvpi}{\Diag(\vpi)}
\newcommand{\Dvpit}{\Diag(\vpi^{(t)})}
\newcommand{\EE}[1]{\bbE\left(#1\right)}
\newcommand\giAh{\ensuremath{\widehat{\boldsymbol{A}}^{\raisebox{-2.9pt}{$\scriptstyle\#$}}}}
\newcommand\giA{\ensuremath{\boldsymbol{A}^{\raisebox{-0pt}{$\scriptstyle\#$}}}}
\newcommand{\defeq}{:=}
\newcommand\emptail{\ensuremath{\tau_{n,\delta}}}
\newcommand{\st}{\ensuremath{\mathrm{s.t.}}}
\newcommand{\ep}{\varepsilon}
\newcommand{\tilgap}{\tilde{\gamma}_\star}
\newcommand{\bargap}{\bar{\gamma}_\star}
\renewcommand\paragraph[1]{\smallskip\noindent\emph{#1}}
\title[Mixing Time Estimation From A Single Sample Path]{%
  Mixing Time Estimation in Reversible Markov Chains from a Single
  Sample Path%
}
\author[D. Hsu]{Daniel Hsu}
\address{Computer Science Department, Columbia University, New York, NY 10027}
\email{djhsu@cs.columbia.edu}
\author[A. Kontorovich]{Aryeh Kontorovich}
\address{Ben-Gurion University}
\email{karyeh@cs.bgu.ac.il}
\author[D.A. Levin]{David A.\ Levin}
\address{Department of Mathematics, University of Oregon,
Eugene, OR 97403-1220}
\email{dlevin@uoregon.edu}
\author[Y. Peres]{Yuval Peres}
\address{Microsoft Research}
\email{peres@microsoft.com}
\author[Cs. Szepesv\'ari]{Csaba Szepesv\'ari}
\address{University of Alberta}
\email{csaba.szepesvari@gmail.com}
\begin{document}

\begin{abstract} 
The spectral gap $\gap$ of a finite, ergodic, and reversible Markov
chain is an important parameter measuring the asymptotic rate of
convergence.   In applications, the transition matrix $\vP$ may be 
unknown, yet one sample of the chain up to a fixed time $n$ may be
observed.  We consider here the problem of estimating $\gap$ from
this data. Let $\pi$ be the stationary distribution of $\vP$, and
 $\pi_\star = \min_x \pi(x)$. We show that 
if $n = \tilde{O}\bigl(\frac{1}{\gap \pi_\star}\bigr)$, then
$\gamma$ can be estimated to within multiplicative constants with
high probability.   When $\pi$ is uniform on $d$ states, 
this matches (up to logarithmic correction) 
a lower bound of $\tilde{\Omega}\bigl(\frac{d}{\gap}\bigr)$
steps required for precise estimation of $\gap$. 
Moreover, we provide the first procedure for computing a fully
data-dependent interval, from a single finite-length trajectory of the
chain, that traps the mixing time 
$t_{\text{mix}}$ of the chain at a prescribed confidence level.  The
interval does not require the knowledge of any parameters of
the chain.  This stands in contrast to previous approaches, which
either only provide point estimates, or require a reset mechanism, or
additional prior knowledge.
The interval is constructed around the relaxation time
$t_{\text{relax}} = 1/\gap$, which is strongly related to the
mixing time, and the width of the interval converges to zero roughly
at a $1/\sqrt{n}$ rate, where $n$ is the length of the sample path.

\end{abstract}

  \maketitle

  \section{Introduction}\label{sec:intro}
This work tackles the challenge of constructing
confidence intervals for
the
mixing time of reversible Markov chains based on a single sample path.
Let $(X_t)_{t=1,2,\dotsc}$ be an irreducible, aperiodic
time-homogeneous Markov chain on a finite state space $[d] :=
\{1,2,\dotsc,d\}$ with transition matrix $\vP$.
Under this assumption, the chain converges to its unique stationary
distribution $\vpi =
(\pi_i)_{i=1}^d$ regardless of the initial state distribution $\vq$: 
\[
  \lim_{t\to\infty} {\Pr}_{\vq}\Parens{X_t = i}
  = \lim_{t\to\infty} (\vq \vP^t)_i = \pi_i
  \quad \text{for each $i \in [d]$} .
\]
The \emph{mixing time} $\tmix$ of the Markov chain is the
number of time steps required
for the chain to
be within a fixed threshold of
its stationary
distribution:
\begin{align}
\label{eq:mixtimedef}
  \tmix
  :=
  \min\Braces{
    t \in \bbN :
    \sup_{\vq}
    \max_{A\subset [d]}
    \Abs{
      \textstyle\Pr_{\vq}\Parens{ X_t \in A } - \vpi(A)
    }
    \leq 1/4
  }\,.
\end{align}
Here, $\vpi(A) = \sum_{i\in A} \pi_i$ is the probability assigned to
set $A$ by $\vpi$, and the supremum is over all possible initial
distributions $\vq$.
The problem studied in this work is the construction of a non-trivial
confidence interval $C_n = C_n(X_1,X_2,\dotsc,X_n,\delta) \subset
[0,\infty]$, based only on the observed sample path
$(X_1,X_2,\dotsc,X_n)$ and $\delta \in (0,1)$, that
succeeds with probability $1-\delta$
in trapping
the value of the
mixing time $\tmix$.

This problem is motivated by the numerous scientific applications and
machine learning tasks in which
the quantity of interest is
the
mean $\vpi(f) = \sum_i \pi_i f(i)$
for some function $f$ of the states of
a Markov chain.
This is the setting of the celebrated Markov Chain Monte Carlo (MCMC)
paradigm \parencite{liu2001monte}, but the problem also arises in
performance prediction involving time-correlated data, as is common in
reinforcement learning \parencite{sutton98}.
Observable, or \emph{a posteriori} bounds on mixing times are useful in the design and
diagnostics of these methods; they yield effective approaches to
assessing the estimation quality, even when \emph{a priori} knowledge
of the mixing time or correlation structure is unavailable.

\subsection{Main results.}
Consider a reversible ergodic Markov chain on $d$ states with absolute
spectral gap $\gap$ and stationary distribution minorized by $\pimin$. 
As is well-known (see, for example, \textcite[Theorems~12.3 and~12.4]{LePeWi08}),
\begin{equation}
  \label{eq:mixing-time-bound}
  \Parens{\trelax - 1} \ln2
  \ \leq \ \tmix
  \ \leq \ \trelax \ln \mfrac4{\pimin}
\end{equation}
where $\trelax := 1/\gap$ is the \emph{relaxation time}.
Hence, it suffices to estimate $\gap$ and $\pimin$. 
Our main results are summarized as follows.
\begin{enumerate}
  \item
    In \cref{sec:rates-lower}, we show that in some problems 
    $n = \Omega\parens{ (d\log d)/\gap + 1/\pimin}$ observations are necessary for any procedure
    to guarantee constant multiplicative accuracy in estimating $\gap$
    (\cref{thm:lb-pimin,thm:lb-gap}).
    Essentially, in some problems \emph{every} state may need to be visited about
    $\log(d)/\gap$ times, on average, before an accurate estimate of
    the mixing time can be provided, regardless of the actual estimation
    procedure used.

  \item
    In \cref{sec:rates-upper}, we give a point estimator $\hatgap$ for
    $\gap$, based an \emph{a single sample path},
    and prove in \cref{thm:bootstrap} that 
    $|\frac{\hatgap}{\gap}-1| < \ep$ with high probability
    if the path is of length
    $\tilde{O}\parens{1/(\pimin\gap \ep^2)}$.  (The
    $\tilde{O}(\cdot)$ notation suppresses logarithmic factors.)
    We also provide and analyze a point estimator for $\pimin$.
    This establishes the feasibility of \emph{estimating} the mixing
    time in this setting, and the dependence on $\pimin$ and $\gap$ in the path length matches our lower bound (up to logarithmic factors) in the case where $1/\pimin = \Omega(d)$. \todoc{I think this should be $\Omega$.}
    We note, however, that these results give only {\em a priori} confidence intervals that depend on the unknown quantities $\pimin$ and $\gap$. As such, the results do not lead to a universal (chain-independent) stopping rule for stopping the chain when the relative error is below the prescribed accuracy.

  \item
    In \cref{sec:empirical}, we propose a procedure for {\em a posteriori} constructing confidence
    intervals for $\pimin$ and $\gap$ that depend only on the observed sample
    path and not on any unknown parameters.
    We prove that the intervals shrink at a $\tilde{O}(1/\sqrt{n})$ rate
    (\cref{thm:empirical,thm:combined}). These confidence intervals trivially lead to a universal stopping rule
    to stop the chain when a prescribed relative error is achieved.

\end{enumerate}

\subsection{Related work.}
There is a vast statistical literature on estimation in Markov chains.
For instance, it is known that under the assumptions on
$(X_t)_t$ from above, the law of large numbers guarantees that
the sample mean $\vpi_n(f) \defeq \frac1n \sum_{t=1}^n f(X_t)$
converges almost surely to $\vpi(f)$ \parencite{meyn1993markov}, while
the 
central limit theorem tells us that as $n\to \infty$, the distribution
of the deviation $\sqrt{n}( \vpi_n(f)-\vpi(f))$ will be normal with
mean zero and asymptotic variance $\lim_{n\to\infty}
n\Var\Parens{\vpi_n(f)}$ \parencite{kipnis1986central}.

Although these asymptotic results help us understand the limiting
behavior of the sample mean over a Markov chain, they say little about
the finite-time non-asymptotic behavior, which is often needed for the
prudent evaluation of a method or even its algorithmic design
\parencites{%
,DBLP:conf/valuetools/KontoyiannisLM06%
,flegal2011implementing%
,Gyori-paulin15%
}.
To address this need, numerous works have developed Chernoff-type
bounds on $\Pr\parens{ |\vpi_n(f)-\vpi(f)| > \eps }$, thus providing
valuable tools for non-asymptotic probabilistic
analysis \parencites{gillman1998chernoff,leon2004optimal,DBLP:conf/valuetools/KontoyiannisLM06,kontorovich2014uniform,Paulin15}.
These probability bounds are larger than the corresponding bounds for
independent and identically distributed (iid) data due to the temporal
dependence; intuitively, for the Markov chain to yield a fresh draw
$X_{t'}$ that behaves as if it was independent of $X_t$, one must wait
$\Theta(\tmix)$ time steps.
Note that the bounds generally depend on distribution-specific
properties of the Markov chain (e.g., $\vP$, $\tmix$, $\gap$), which are often
unknown \emph{a priori} in practice.
Consequently, much effort has been put towards estimating these
unknown quantities, especially in the context of MCMC diagnostics, in
order to provide data-dependent assessments of estimation
accuracy \parencites[e.g.,][]{%
  GaSmi00:eigval,%
  jones2001,%
  flegal2011implementing,%
  1209.0703,%
  Gyori-paulin15%
}.
However, these approaches generally only provide asymptotic
guarantees, and hence fall short of our goal of empirical bounds that
are valid with any finite-length sample path. In particular, they also fail
to provide universal stopping rules that allow the estimation of (for example) 
the mixing time with a fixed relative accuracy.

Learning with dependent data is another main motivation to our work.
Many results from statistical learning and empirical process theory
have been extended to sufficiently fast mixing, dependent
data~\parencites[e.g.,][]{Yu94,MR1921877,gamarnik03,MoRo08,DBLP:conf/nips/SteinwartC09,Steinwart2009175},
providing learnability assurances (e.g., generalization error bounds).
These results are often given in terms of mixing coefficients, which
can be consistently estimated in some cases \parencite{McDoShaSche11}.
However, the convergence rates of the estimates
from \textcite{McDoShaSche11}, which are needed to derive confidence
bounds, are given in terms of unknown mixing coefficients.
When the data comes from a Markov chain, these mixing coefficients can
often be bounded in terms of mixing times, and hence our main results
provide a way to make them fully empirical, at least in the limited setting we study.

It is possible to eliminate many of the difficulties presented above
when allowed more flexible access
to the Markov chain.
For example, given a sampling oracle that generates
independent transitions from any given state (akin to a ``reset''
device), the mixing time becomes an efficiently testable property in
the sense studied by \textcite{BaFoRuSmiWhi00,BaFoRuSmiWhi13,DBLP:conf/nips/BhattacharyaV15}.
Note that in this setting, \textcite{DBLP:conf/nips/BhattacharyaV15} asked if one could approximate $\tmix$ (up to logarithmic factors) with a number of queries that is linear in both $d$ and $\tmix$; our work answers the question affirmatively (up to logarithmic corrections) in the case when the stationary distribution is near uniform.
Finally, when one only has a circuit-based description of
the transition probabilities of a Markov chain over an
exponentially-large state space, there are complexity-theoretic
barriers for many MCMC diagnostic problems \parencite{BhaBoMo11}.

This paper is based on the conference paper of \textcite{HKS}, combined
with the results in the unpublished manuscript of \textcite{LP:EstGap}.

  \section{Preliminaries}\label{sec:prelim}
  \subsection{Notations}
\label{sec:notation}

We denote the set of positive integers by $\bbN$, and
the set of the first $d$ positive integers $\{1,2,\dotsc,d\}$ by $\iset{d}$.
The non-negative part of a real number $x$ is $[x]_+ := \max\{0,x\}$,
and $\ceil{x}_+ := \max\{0,\ceil{x}\}$.
We use $\ln(\cdot)$ for natural logarithm, and $\log(\cdot)$ for
logarithm with an arbitrary constant base $>1$.
Boldface symbols are used for vectors and matrices (e.g., $\vv$,
$\vM$), and their entries are referenced by subindexing (e.g., $v_i$,
$M_{i,j}$).
For a vector $\vv$, $\norm{\vv}$ denotes its Euclidean norm; for a
matrix $\vM$, $\norm{\vM}$ denotes its spectral norm.
We use $\Diag(\vv)$ to denote the diagonal matrix whose $(i,i)$-th
entry is $v_i$.
The probability simplex is denoted by $\Delta^{d-1} = \{ \vp
\in [0,1]^d : \sum_{i=1}^d p_i = 1 \}$, and we regard vectors in
$\Delta^{d-1}$ as row vectors.

\subsection{Setting}
\label{sec:setting}

Let $\vP \in (\Delta^{d-1})^d \subset [0,1]^{d \times d}$ be a $d
\times d$ row-stochastic matrix for an ergodic (i.e., irreducible and
aperiodic) Markov chain.
This implies there is a unique stationary distribution $\vpi \in
\Delta^{d-1}$ with $\pi_i > 0$ for all $i \in
[d]$ \parencite[Corollary 1.17]{LePeWi08}.
We also assume that $\vP$ is \emph{reversible} (with respect to
$\vpi$):
\begin{align}
\label{eq:reversibility}
  \pi_i P_{i,j} = \pi_j P_{j,i} ,
  \quad i,j \in [d] .
\end{align}
The minimum stationary probability is denoted by $\pimin := \min_{i
\in [d]} \pi_i$.

Define the matrices
\begin{align*}
\vM := \Diag(\vpi) \vP \quad \text{and} \quad
\vL := \Diag(\vpi)^{-1/2} \vM \Diag(\vpi)^{-1/2}\,.
\end{align*}
The $(i,j)$th entry of the matrix $M_{i,j}$ contains the \emph{doublet
probabilities} associated with $\vP$: $M_{i,j} = \pi_i P_{i,j}$ is the
probability of seeing state $i$ followed by state $j$ when the chain
is started from its stationary distribution.
The matrix $\vM$ is symmetric on account of the reversibility of
$\vP$, and hence it follows that $\vL$ is also symmetric.
(We will strongly exploit the symmetry in our results.)
Further, $\vL = \Diag(\vpi)^{1/2} \vP \Diag(\vpi)^{-1/2}$, hence $\vL$
and $\vP$ are similar and thus their eigenvalue systems are identical.
Ergodicity and reversibility imply that the eigenvalues of $\vL$ are
contained in the interval $\opencloseint{-1,1}$, and that $1$ is an
eigenvalue of $\vL$ with multiplicity $1$ \parencite[Lemmas 12.1 and
12.2]{LePeWi08}.
Denote and order the eigenvalues of $\vL$ as
\[
  1 = \lambda_1 > \lambda_2 \geq \dotsb \geq \lambda_d > -1 .
\]
Let $\slem := \max\{ \lambda_2,\, |\lambda_d| \}$, and define the
(absolute) spectral gap to be $\gap := 1-\slem$, which is strictly
positive on account of ergodicity.

Let $(X_t)_{t\in\bbN}$ be a Markov chain whose transition
probabilities are governed by $\vP$.
For each $t \in \bbN$, let $\vpi^{(t)} \in \Delta^{d-1}$ denote the
marginal distribution of $X_t$, so
\[
  \vpi^{(t+1)} = \vpi^{(t)} \vP ,
  \quad t \in \bbN .
\]
Note that the initial distribution $\vpi^{(1)}$ is arbitrary,
and need not be the stationary distribution $\vpi$.

The goal is to estimate $\pimin$ and $\gap$ from the length $n$ sample path
$(X_t)_{t \in [n]}$, and also to construct confidence intervals that $\pimin$
and $\gap$ with high probability; in particular, the construction of the
intervals should be fully empirical and not depend on any unobservable
quantities, including $\pimin$ and $\gap$ themselves.
As mentioned in the introduction, it is well-known that the \emph{mixing time}
of the Markov chain $\tmix$ (defined in \cref{eq:mixtimedef}) is bounded in
terms of $\pimin$ and $\gap$, as shown in \cref{eq:mixing-time-bound}.
Moreover, convergence rates for empirical processes on Markov chain sequences
are also often given in terms of mixing coefficients that can ultimately be
bounded in terms of $\pimin$ and $\gap$ (as we will show in the proof of our
first result).
Therefore, valid confidence intervals for $\pimin$ and $\gap$ can be used to
make these rates fully observable.

  \section{Point estimation}\label{sec:rates}
In this section, we present lower and upper bounds on achievable rates
for estimating the spectral gap as a function of the length of the
sample path $n$.

\subsection{Lower bounds}
\label{sec:rates-lower}
The purpose of this section is to show lower bounds on the number of observations
necessary to achieve a fixed multiplicative (or even just additive) accuracy in estimating the spectral gap $\gap$.
By \cref{eq:mixing-time-bound}, the multiplicative accuracy lower bound for $\gap$
gives the same lower bound for estimating the mixing time.
Our first result holds even for two state Markov chains and shows that a sequence length of $\Omega(1/\pimin)$
is necessary to achieve even a constant \emph{additive} accuracy in estimating $\gap$.
\begin{theorem}
  \label{thm:lb-pimin}
  Pick any $\bar\pi \in (0,1/4)$.
  Consider any estimator $\hatgap$ that takes as input a random sample
  path of length $n \leq 1/(4\bar\pi)$ from a Markov chain starting
  from any desired initial state distribution.
  There exists a two-state ergodic and reversible Markov chain
  distribution with spectral gap $\gap \geq 1/2$ and minimum
  stationary probability $\pimin \geq \bar\pi$ such that
  \[
    \Pr\Brackets{ |\hatgap - \gap| \geq 1/8 } \geq 3/8 .
  \]
\end{theorem}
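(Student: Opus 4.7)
My plan is a Le Cam two-point argument: I will exhibit two reversible ergodic two-state Markov chains, both satisfying the hypotheses of the theorem, whose induced distributions on length-$n$ sample paths started at state~$1$ coincide on a common event of probability at least $3/4$. Because the two chains will have very different spectral gaps, this forces any estimator to err on at least one of them with the claimed probability.

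Parameterize a two-state chain by $P_{1,2}=a$ and $P_{2,1}=b$; then the stationary distribution is $(b,a)/(a+b)$, the nontrivial eigenvalue of $\vP$ is $1-a-b$, and the absolute spectral gap is $1-|1-a-b|$. Take Chain~1 with $(a,b)=(\bar\pi,\,1/2-\bar\pi)$, yielding spectral gap $\gamma_1=1/2$ and minimum stationary probability $2\bar\pi$; take Chain~2 with $(a,b)=(\bar\pi,\,1-\bar\pi)$, yielding spectral gap $\gamma_2=1$ and minimum stationary probability $\bar\pi$. Since $\bar\pi\in(0,1/4)$, every transition probability lies in $(0,1)$, so both chains are ergodic; both satisfy $\gap\geq 1/2$ and $\pimin\geq\bar\pi$; and reversibility is automatic in two states.

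The key structural feature is that both chains share the self-loop probability $P_{1,1}=1-\bar\pi$ at state~$1$. Starting from state~$1$, the event $\cE=\{X_1=X_2=\cdots=X_n=1\}$ therefore has the same probability under either chain, namely $(1-\bar\pi)^{n-1}$; by Bernoulli's inequality and $n\leq 1/(4\bar\pi)$,
\[
(1-\bar\pi)^{n-1}\;\geq\; 1-(n-1)\bar\pi\;\geq\; 3/4.
\]
Let $V=\hatgap(1,1,\ldots,1)$ be the (possibly randomised) estimator output on the all-ones sequence; $V$ depends only on the estimator's internal randomness and is therefore independent of the chain. One of the events $\{V\geq 3/4\}$, $\{V<3/4\}$ has probability at least $1/2$; on the first, $|V-\gamma_1|\geq 1/4$, and on the second, $|V-\gamma_2|>1/4$. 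By independence, in the first case
\[
\Pr_1\bigl[|\hatgap-\gamma_1|\geq 1/4\bigr]\;\geq\; \Pr_1(\cE)\cdot\Pr(V\geq 3/4)\;\geq\; \tfrac{3}{4}\cdot\tfrac{1}{2}\;=\;\tfrac{3}{8},
\]
and the second case yields the analogous bound for Chain~2. Since $1/4>1/8$, this is stronger than the required claim.

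The real obstacle is the design of the two chains: they must simultaneously have well-separated gaps, respect the constraints $\gap\geq 1/2$ and $\pimin\geq\bar\pi$, and agree on the law of the ``stuck at state~$1$'' trajectory. Pinning $P_{1,2}=\bar\pi$ in both chains arranges the last condition; once that is fixed, Chain~2 is pushed to $P_{2,1}=1-\bar\pi$ to maximise its gap, and Chain~1 is placed on the boundary $\gap=1/2$ permitted by the hypothesis. Everything else is a one-line Bernoulli estimate and the standard independence split between chain and estimator randomness.
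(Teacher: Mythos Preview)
Your approach is essentially the paper's: a two-point construction with two two-state chains sharing $P_{1,1}=1-\bar\pi$, so that the all-ones path has the same probability under both and the chains are indistinguishable on that event. Your Chain~2 is exactly the paper's $\vP^{(1)}$; your Chain~1 differs from the paper's $\vP^{(2)}$ only in that you place the gap at $1/2$ rather than $1/2+\bar\pi$, which makes no material difference.

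There is one genuine omission. The theorem lets the estimator pick the initial state distribution, but your argument silently fixes $X_1=1$. If the estimator chooses to start at state~$2$ (or puts most of its initial mass there), your event $\cE$ has small probability and the bound fails. The paper handles this by first looking at the estimator's chosen initial distribution $\vq$: if $q_1\ge 1/2$, use the chains as written; if $q_1<1/2$, swap the labels of the two states in the construction so that the heavier initial state plays the role of state~$1$. Spectral gaps, reversibility, and the $\pimin\ge\bar\pi$ constraint are all invariant under this relabeling, and both chains still agree on transitions out of the (new) state~$1$. With this fix the initial state is the designated one with probability at least $1/2$, and the paper combines this with the $(1-\bar\pi)^{n-1}\ge 3/4$ estimate to get the indistinguishability event with probability at least $3/8$. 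You should add this relabeling step; once you do, note that your factor-of-$1/2$ split for the estimator's internal randomness and the paper's factor-of-$1/2$ for the random initial state are playing the same bookkeeping role in reaching the final $3/8$.
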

Next, considering $d$ state chains, we show that 
a sequence of length $\Omega(d\log(d)/\gap)$ is required
to estimate $\gap$ up to a constant multiplicative accuracy.
Essentially, the sequence may have to visit all $d$ states at least
$\log(d)/\gap$ times each, on average.
This holds \emph{even} if $\pimin$ is within a factor of two of the
\emph{largest} possible value of $1/d$ that it can take, i.e., when
$\vpi$ is nearly uniform.
\begin{theorem}
  \label{thm:lb-gap}
  There is an absolute constant $c>0$ such that the following holds.
  Pick any positive integer $d \geq 3$ and any $\bargap \in
  (0,1/2)$.
  Consider any estimator $\hatgap$ that takes as input a random sample
  path of length $n < c d\log(d) / \bargap$ from a $d$-state
  reversible Markov chain starting from any desired initial state
  distribution.
  There is an ergodic and reversible Markov chain distribution
  with spectral gap $\gap \in [\bargap,2\bargap]$ and minimum
  stationary probability $\pimin \geq 1/(2d)$ such that
  \[
    \Pr\Brackets{ |\hatgap - \gap| \geq \bargap/2} \geq 1/4 .
  \]
\end{theorem}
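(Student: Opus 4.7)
The plan is to prove a minimax lower bound via an information-theoretic reduction to a multi-hypothesis testing problem. The extra $\log d$ factor beyond the naive $\Omega(d/\bargap)$ bound arises from the $d$-fold symmetry of the hardest chain family: the spectral gap is governed by local statistics at a single ``anomalous'' state, which the estimator must implicitly search over all $d$ candidates.

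\textbf{Hard family.} Construct $d+1$ reversible Markov chains on $[d]$, all with uniform stationary distribution (so $\pimin = 1/d \geq 1/(2d)$). Take the fast-mixing base chain $P_0(x,y) = 2\bargap/d$ for $x \neq y$, $P_0(x,x) = 1 - 2\bargap(d-1)/d$; its eigenvalues are $1$ (simple) and $1 - 2\bargap$ (multiplicity $d-1$), giving $\gap(P_0) = 2\bargap$. For each $v \in [d]$, define $P_v$ by symmetrically halving off-diagonal rates on the row and column of $v$: $P_v(v,u) = P_v(u,v) = \bargap/d$ for $u \neq v$, with diagonals absorbing the slack. $P_v$ is symmetric, hence reversible with respect to the uniform distribution; decomposing into the invariant subspace $\Span\{\mathbf{1}, e_v\}$ (eigenvalues $1, 1-\bargap$) and its orthogonal complement (scalar action $1 - \bargap(2d-1)/d$, multiplicity $d-2$) yields $\gap(P_v) = \bargap$. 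All gaps thus lie in $[\bargap, 2\bargap]$.

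\textbf{Reduction.} An estimator with $|\hatgap - \gap| < \bargap/2$ and failure probability $<1/4$ on every one of $\{P_0, P_1, \ldots, P_d\}$ would induce a binary test separating $P_0^n$ from the mixture $\bar P^n := \tfrac{1}{d}\sum_v P_v^n$ with total variation at least $1/2$. It therefore suffices to show $\tv(P_0^n, \bar P^n) < 1/2$ whenever $n < c d \log d/\bargap$. The initial distribution may WLOG be taken as the (common) stationary distribution, so $X_t$ is uniform under every hypothesis.

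\textbf{Divergence bound.} The chain rule yields
\begin{equation*}
  \operatorname{KL}(P_v^n \,\|\, P_0^n) \;=\; \sum_{t=1}^{n-1} \bbE\bigl[\operatorname{KL}\bigl(P_v(X_t,\cdot)\,\|\,P_0(X_t,\cdot)\bigr)\bigr],
\end{equation*}
and a direct computation shows the one-step KL is $O(\bargap)$ when $X_t = v$ (probability $1/d$) and $O(\bargap/d)$ otherwise, so $\operatorname{KL}(P_v^n \,\|\, P_0^n) = O(n\bargap/d)$. The naive route---convexity of KL followed by Pinsker---gives $\tv(P_0^n, \bar P^n) \leq O(\sqrt{n\bargap/d})$, recovering only the weaker bound $n = \Omega(d/\bargap)$. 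The extra logarithm is obtained either (i) by applying Fano's inequality to the $(d+1)$-ary identification problem $V \in \{0, 1, \ldots, d\}$: since $I(V; X_{1:n}) \leq \tfrac{1}{d+1}\sum_v \operatorname{KL}(P_v^n \,\|\, P_0^n) = O(n\bargap/d)$, the Bayes error $P_e \geq 1 - (I + \log 2)/\log(d+1)$ stays bounded away from zero when $n\bargap/d = o(\log d)$; or (ii) by a second-moment $\chi^2$-calculation on the mixture, in which the cross-terms $\bbE_{P_0}[L_v L_{v'}]$ for $v \neq v'$ partially cancel the diagonal contribution $\bbE_{P_0}[L_v^2]$, leaving $\chi^2(\bar P^n \,\|\, P_0^n)$ small unless $n \gtrsim d\log d/\bargap$.

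\textbf{Main obstacle.} The principal difficulty is bridging Fano's $d$-way identification lower bound with the $2$-way gap-estimation test: a successful gap estimator need not identify $v$, so the reduction must either augment the estimator with a guess for $v$ when $\hatgap \approx \bargap$ (exploiting the symmetry of the alternatives) or bypass Fano entirely via direct $\chi^2$-analysis of the mixture, where the delicate cancellations between diagonal and cross-terms are the main technical item. The remaining verifications---the spectral gaps of $P_v$ and $P_0$, reversibility, the lower bound on $\pimin$, and the reduction to stationary initial distribution---are comparatively routine given the symmetric construction.
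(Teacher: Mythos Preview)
Your proposal has a genuine gap at precisely the point you flag as the ``main obstacle'': neither route (i) nor (ii) is carried through. Route (i) has a structural problem you yourself identify but do not resolve: Fano lower-bounds the error for identifying $V \in \{0,1,\ldots,d\}$, but all alternatives $P_1,\ldots,P_d$ share the same gap $\bargap$, so a successful gap estimator only solves the binary test $\{V=0\}$ vs.\ $\{V\neq 0\}$. Your suggested fix of augmenting the estimator with a guess for $v$ does not work, since the gap estimator's output carries no information about \emph{which} alternative generated the data. Route (ii) is plausible---if the cross-terms $\bbE_{P_0}[L_vL_{v'}]$ for $v\neq v'$ were close to $1$ one would get $\chi^2(\bar P^n\,\|\,P_0^n)+1 \approx d^{-1}e^{O(n\bargap/d)}+(1-d^{-1})$, which stays bounded exactly when $n=O(d\log d/\bargap)$---but you have not performed this calculation, and in your construction $P_v$ and $P_{v'}$ are not independent perturbations (they interact at the entries $(v,v')$ and $(v',v)$), so the cancellation is not automatic.

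The paper bypasses all of this with a much simpler, non-information-theoretic argument. Its hard family is built so that $\vP^{(0)}$ and $\vP^{(i)}$ differ \emph{only in the transitions out of state $i$}: the chains have $P_{j,k}=\veps_j/(d-1)$ for $j\neq k$, and only $\veps_i$ is altered. (The stationary distribution is then $\pi_j\propto 1/\veps_j$, not uniform; this is the price paid for modifying a single row while preserving reversibility.) Consequently $\vP^{(0)}$ and $\vP^{(i)}$ are \emph{perfectly} indistinguishable on any sample path that never visits state $i$---no KL, no Fano, no $\chi^2$. To rule out every alternative the path must visit every state, and the time to do so stochastically dominates a coupon-collector variable: a sum of independent geometrics with mean $(d-1)H_{d-1}/\veps = \Theta(d\log d/\bargap)$. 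A Paley--Zygmund inequality then converts this into the constant-probability lower bound. The $\log d$ factor thus comes for free from coupon collection once the construction is chosen to make indistinguishability exact rather than approximate; your symmetric construction forfeits this property (transitions \emph{into} $v$ also change), which is what forces you onto the harder information-theoretic route that you leave unfinished.
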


The proofs of \cref{thm:lb-pimin,thm:lb-gap} are given in \cref{sec:proof-lower}.

\subsection{A plug-in based point estimator and its accuracy}
\label{sec:rates-upper}
Let us now consider the problem of estimating $\gap$.
For this, we construct a natural plug-in estimator.
Along the way, we also provide an estimator for the minimum stationary probability,
allowing one to use the bounds from \cref{eq:mixing-time-bound} to trap
the mixing time.

Define the random matrix $\wh\vM \in [0,1]^{d \times d}$ and random
vector $\hat\vpi \in \Delta^{d-1}$ by
\begin{align*}
  \wh{M}_{i,j}
  & := \frac{|\{ t \in [n-1] : (X_t,X_{t+1}) = (i,j) \}|}{n-1}
  , \quad i,j \in [d]\,,
  \\
  \hat{\pi}_i
  & := \frac{|\{ t \in [n] : X_t = i \}|}{n}
  , \quad i \in [d]
  \,.
\end{align*}
Furthermore, define
\[
  \Sym(\wh\vL) := \frac12 \parens{ \wh\vL + \wh\vL^\t }
\]
to be the symmetrized version of the (possibly non-symmetric) matrix
\[
  \wh\vL := \Diag(\hat\vpi)^{-1/2} \wh\vM \Diag(\hat\vpi)^{-1/2}
  .
\]
Let $\hat\lambda_1 \geq \hat\lambda_2 \geq \dotsb \geq \hat\lambda_d$
be the eigenvalues of $\Sym(\wh\vL)$.
Our estimator of the minimum stationary probability $\pimin$ is
 $\hatpimin := \min_{i \in [d]} \hat\pi_i$,
and our estimator of the spectral gap $\gap$ is
 $\hatgap := 1 - \min\{ 1, \max\{ \hat\lambda_2, |\hat\lambda_d| \} \} \in [0,1]$. 
The astute reader may notice that our estimator is ill-defined when $\hat\vpi$ is not positive valued.
In this case, we can simply set $\hatgap=0$. 

These estimators have the following accuracy guarantees:
\begin{theorem}
  \label{thm:err}
  There exists an absolute constant $C\ge 1$ such that the following holds.
  Let $(X_t)_{t=1}^n$ be an ergodic and reversible Markov chain with spectral
  gap $\gap$ and minimum stationary probability $\pimin>0$.
  Let $\hatpimin = \hatpimin((X_t)_{t=1}^n)$ and $\hatgap =
  \hatgap((X_t)_{t=1}^n)$ be the estimators described above.
  For any $\delta \in (0,1)$, with probability at least $1-\delta$,
  \begin{equation}\label{eq:piminbound}
    \Abs{\hatpimin-\pimin}
    \le
    C \,
    \Parens{
      \sqrt{\frac{\pimin\log\frac{1}{\pimin\delta}}{\gap n}}
      +
      \frac{\log\frac{1}{\pimin\delta}}{\gap n}
    }
  \end{equation}
  and
  \begin{equation}\label{eq:gapbound}
    \Abs{\hatgap-\gap}
    \leq
    C \,
      \sqrt{\frac{\log\frac{d}{\delta}\cdot\log\frac{n}{\pimin\delta}}{\pimin\gap n}}
    .
  \end{equation}
\end{theorem}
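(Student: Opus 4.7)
The plan is to prove the two inequalities separately. Both ultimately follow from concentrating the empirical counts---the singleton frequencies $\hat\vpi$ for \cref{eq:piminbound}, and the doublet frequencies $\wh\vM$ for \cref{eq:gapbound}---around their population means, using a Bernstein-type inequality for reversible Markov chains that pays a factor of $1/\gap$ in place of the usual $1/n$ Hoeffding exponent.

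For \cref{eq:piminbound}, apply a Markov-chain Bernstein inequality (of the type due to Paulin or Lezaud) to each sum $n\hat\pi_i = \sum_{t=1}^n \ind{X_t = i}$, yielding
\[
\abs{\hat\pi_i - \pi_i}
\ \lesssim\
\sqrt{\frac{\pi_i \log(1/\delta')}{\gap n}} + \frac{\log(1/\delta')}{\gap n}
\]
with probability at least $1-\delta'$. Union-bounding over $i \in \iset{d}$ and using the crude inequality $d \leq 1/\pimin$ converts $\log(d/\delta)$ into $\log(1/(\pimin\delta))$. The sharper $\sqrt{\pimin}$ leading factor (in place of $\sqrt{\max_i \pi_i}$) follows by treating the two sides separately: one has $\hatpimin - \pimin \leq \hat\pi_{j^\star} - \pi_{j^\star}$ for the deterministic index $j^\star \in \argmin_i \pi_i$, with $\pi_{j^\star} = \pimin$; conversely, any state with $\pi_i \gg \pimin$ satisfies $\hat\pi_i > \hat\pi_{j^\star}$ with high probability, so the empirical minimizer must lie in $\{i : \pi_i \leq 2\pimin\}$, on which $\sqrt{\pi_i} \leq \sqrt{2\pimin}$.

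For \cref{eq:gapbound}, use that $\vL$ is symmetric to write $\Sym(\wh\vL) - \vL = \Sym(\wh\vL - \vL)$; Weyl's inequality then yields
\[
\max_k \abs{\hat\lambda_k - \lambda_k}
\ \leq\ \norm{\Sym(\wh\vL - \vL)}
\ \leq\ \norm{\wh\vL - \vL},
\]
and since the map $(u,v) \mapsto 1 - \min\{1, \max(u, \abs{v})\}$ is $1$-Lipschitz, the same bound transfers to $\abs{\hatgap - \gap}$. It remains to control $\norm{\wh\vL - \vL}$. Decompose
\[
\wh\vL - \vL
\ =\
\Diag(\vpi)^{-1/2}(\wh\vM - \vM)\Diag(\vpi)^{-1/2}
\ +\ R,
\]
where the remainder $R$ collects the terms arising from replacing $\vpi$ by $\hat\vpi$ in the outer diagonal factors and is controlled by \cref{eq:piminbound} together with $\norm{\vL}=1$. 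For the main term, apply a matrix Bernstein inequality for sums over a reversible Markov chain to the rescaled increments $\vY_t \defeq \wt\ve_{X_t}\wt\ve_{X_{t+1}}^\t - \vL$, with $\wt\ve_i \defeq \ve_i/\sqrt{\pi_i}$. Under stationarity, a direct computation using reversibility shows that $\norm{\sum_{t} \bbE[\vY_t \vY_t^\t]}$ is dominated by the operator norm of a diagonal matrix with entries $\sum_j P_{i,j}/\pi_j \leq 1/\pimin$, and is therefore of order $n/\pimin$; combined with the uniform range bound $\norm{\vY_t} \lesssim 1/\pimin$, the matrix Bernstein inequality produces the advertised $\sqrt{\log(d/\delta)/(\pimin\gap n)}$ leading term.

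The main obstacle is securing the correct $1/\pimin$ (rather than $1/\pimin^2$) scaling of the matrix variance: a naive factorization $\norm{\wh\vL - \vL} \leq \norm{\wh\vM - \vM}/\pimin$ gives an error with $1/\pimin$ outside the square root, which is loose by a factor of $1/\sqrt{\pimin}$. The improvement requires working with the rescaled increments $\vY_t$ in place and exploiting reversibility (specifically $P_{i,j}/\pi_j = P_{j,i}/\pi_i$) to cancel one of the two $\pi^{-1/2}$ factors in the variance computation. Secondary technical steps are the bootstrap interaction between $\hat\vpi$ (appearing inside $\wh\vL$) and the deviation $\wh\vM - \vM$---handled by substituting the already-proved bound \cref{eq:piminbound}---and the non-stationarity of the initial distribution, which contributes the extra $\log(n/(\pimin\delta))$ factor in \cref{eq:gapbound} through a burn-in argument.
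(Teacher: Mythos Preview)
Your architecture matches the paper: Weyl reduces $|\hatgap-\gap|$ to $\|\wh\vL-\vL\|$; the decomposition into a main term $\errm=\Diag(\vpi)^{-1/2}(\wh\vM-\vM)\Diag(\vpi)^{-1/2}$ plus a remainder driven by $\errp=\Diag(\hat\vpi)^{-1/2}\Diag(\vpi)^{1/2}-\vI$ is the same; and you correctly identify that the matrix-variance proxy must be computed \emph{after} the $\Diag(\vpi)^{-1/2}$ rescaling, yielding $\max_i\sum_j P_{i,j}/\pi_j\leq 1/\pimin$ rather than $1/\pimin^2$. The treatment of $\hatpimin$ via Paulin's scalar Bernstein plus a self-bounding argument for the empirical minimizer is also essentially the paper's.

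The gap is the step ``apply a matrix Bernstein inequality for sums over a reversible Markov chain'': the paper has no such black box, and producing a usable version is precisely the main technical work. Instead, the paper uses Bernstein's blocking technique: partition $[n-1]$ into blocks of length $a=\lceil\gap^{-1}\ln(2(n-2)/(\pimin\delta))\rceil$, apply Tropp's \emph{i.i.d.}\ matrix Bernstein to independent copies of the block sums, and transfer the tail bound to the dependent blocks via a $\beta$-mixing argument (Yu's corollary), using that blocks separated by one block length satisfy $\beta\lesssim\exp(-a\gap)/\pimin\leq\delta/n$. The effective sample size drops from $n$ to $\mu\approx n/(2a)$, and \emph{this} is where the extra $\log\frac{n}{\pimin\delta}$ factor in \cref{eq:gapbound} comes from---it is the block length, not a burn-in. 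Non-stationarity of the start contributes only an additive bias $\|\bbE[\errm]\|\leq 1/((n-1)\gap\pimin)$, which is lower order; your burn-in explanation would not reproduce the stated bound. A smaller point: the remainder is controlled not by \cref{eq:piminbound} itself but by the per-coordinate bounds $|\hat\pi_i-\pi_i|\lesssim\sqrt{\pi_i\veps_n}+\veps_n$ (giving $\|\errp\|\lesssim\sqrt{\veps_n/\pimin}$); \cref{eq:piminbound} is derived from these, not the other way around.
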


\cref{thm:err} implies that the sequence lengths sufficient to estimate $\pimin$ and
$\gap$ to within constant multiplicative factors are, respectively,
\[
  \tilde{O}\Parens{\frac1{\pimin\gap}} \quad \text{and} \quad
  \tilde{O}\Parens{\frac1{\pimin\gap^3}} .
\]

The proof of \cref{thm:err} is based on analyzing the
convergence of the sample averages $\wh{\vM}$ and
$\hat\vpi$ to their expectation, and then using perturbation bounds
for eigenvalues to derive a bound on the error of $\hatgap$.
However, since these averages are formed using a \emph{single sample
path} from a (possibly) non-stationary Markov chain, we cannot use
standard large deviation bounds; moreover applying Chernoff-type
bounds for Markov chains to each entry of $\wh{\vM}$ would result in a
significantly worse sequence length requirement, roughly a factor of
$d$ larger.
Instead, we adapt probability tail bounds for sums of independent
random matrices \parencite{tropp2015intro} to our non-iid setting by
directly applying a blocking technique of \textcite{Bernstein27} as
described in the article of \textcite{Yu94}.
Due to ergodicity, the convergence rate can be bounded without any
dependence on the initial state distribution $\vpi^{(1)}$.
The proof of \cref{thm:err} is given in \cref{sec:proof-upper}.

\subsection{Improving the plug-in estimator}

We can bootstrap the plug-in estimator in \cref{eq:gapbound} to show that in
fact, to obtain any prescribed multiplicative accuracy, $\tilde{O}(1/(\pi_\star\gap))$ steps suffice to estimate $\gap$.
The idea is to apply the estimator $\hatgap$ from \cref{eq:gapbound} to the
\emph{$a$-skipped chain} $( X_{as} )_{s=1}^{n/a}$ for some $a\geq1$.
This chain has spectral gap $\gap(a) = 1 - (1-\gap)^a$.
Thus, letting $\hatgap(a)$ be the plug-in estimator for $\gap(a)$ based on the
$a$-skipped chain, a natural estimator of $\gap$ is $1 - (1-\hatgap(a))^{1/a}$.

Why may this improve on the original plug-in estimator from
\cref{sec:rates-upper}?
Observe that $\gap(a) = \Omega(\gap a)$ for $a \leq 1/\gap$, so the additive
accuracy bound from \cref{eq:gapbound} for the plug-in estimator on
$(X_{as})_{s=1}^{n/a}$ is roughly the same for all $a \leq 1/\gap$.
However, when $\gap(a)$ is bounded away from $0$ and $1$, a small additive
error in estimating $\gap(a)$ with $\hatgap(a)$ translates to a small
multiplicative error in estimating $\gap$ using $1 - (1-\hatgap(a))^{1/a}$.
So it suffices to use the skipped chain estimator with some $a = O(1/\gap)$.
Since $\gap$ is not known (of course), we use a doubling trick to find a
suitable value of $a$.

The estimator is defined as follow.
For simplicity, assume $n$ is a power of two.
Initially, set $k := 0$.
Let $a := 2^k$ and $\hatgap(a) := \hatgap((X_{as})_{s=1}^{n/a})$.
If $\hatgap(a) > 0.31$ or $a=n$, then set $A := a$ and return $\tilgap := 1 -
(1-\hatgap(A))^{1/A}$.
Otherwise, increment $k$ by one and repeat.

\begin{theorem} \label{thm:bootstrap}
  There exists a polynomial function $\cL$ of the logarithms of $\gap^{-1}$,
  $\pi_\star^{-1}$, $\delta^{-1}$, and $d$ such that the following holds.
  Let $(X_t)_{t=1}^n$ be an ergodic and reversible Markov chain with spectral
  gap $\gap$ and minimum stationary probability $\pimin>0$.
  Let $\tilgap = \tilgap((X_t)_{t=1}^n)$ be the estimator defined above.
  For any $\veps, \delta \in (0,1)$, if
  $n \geq \cL / (\pimin\gap\veps^2)$, then
  with probability at least $1-\delta$,
  \[
    \left| \frac{\tilgap}{\gap} - 1 \right| \leq \veps .
  \]
\end{theorem}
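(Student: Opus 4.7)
The plan is to invoke the additive-error guarantee of \cref{thm:err} at each scale $a=2^k$ of the doubling schedule (applied to the $a$-skipped chain), and then to exploit the fact that the map $x\mapsto 1-(1-x)^a$ is well-conditioned at the values we care about precisely when $a\asymp 1/\gap$---which is exactly the scale the doubling trick is designed to identify.

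First, observe that each $a$-skipped chain $(X_{as})_{s=1}^{n/a}$ is ergodic and reversible, with the same stationary distribution (and hence the same $\pimin$) and with spectral gap $\gap(a)=1-(1-\gap)^a$. I apply \cref{thm:err} at confidence $\delta/K$, where $K=\lceil\log_2 n\rceil+1$, and take a union bound over the $O(\log n)$ doubling rounds. On the resulting event of probability at least $1-\delta$, every computed $\hatgap(a)$ satisfies
\[
  |\hatgap(a)-\gap(a)| \ \le \ E(a) \ \defeq \ C\sqrt{\frac{\log\frac{dK}{\delta}\cdot\log\frac{nK}{\pimin\delta}}{\pimin\,\gap(a)\,(n/a)}} \ .
\]
The crucial uniform estimate is Bernoulli's inequality in the form $1-(1-\gap)^a \ge a\gap/(1+(a-1)\gap)$, which shows $\gap(a)(n/a) \ge c_1 \gap n$ for an absolute constant $c_1>0$ whenever $a\gap \le 2$. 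Consequently $E(a) \le C'\sqrt{\cL_0/(\pimin\gap n)}$ for a polylogarithmic factor $\cL_0$, uniformly across every scale reached by the algorithm.

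I then analyze the stopping rule and the final estimator together. Fix a constant $c_0$ so that $a^\star \defeq \lceil c_0/\gap\rceil$ yields $\gap(a^\star) \ge 1-e^{-c_0} \ge 1/2$. Provided $\cL$ is taken large enough that $E(a^\star) \le 1/10$---this is what pins down the eventual value of $\cL$---we have $\hatgap(a^\star) \ge 0.4 > 0.31$, so the algorithm must terminate at some $A \le 2a^\star = O(1/\gap)$. Conversely, at termination $\hatgap(A) > 0.31$ forces $\gap(A) \ge 0.21$, and $1-(1-\gap)^A \ge 0.21$ in turn forces $A\gap \ge c_2$ for an absolute constant $c_2>0$; hence $A\gap$ is pinched between two absolute constants. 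Now $\gap = f(\gap(A))$ and $\tilgap = f(\hatgap(A))$ for $f(x)=1-(1-x)^{1/A}$, and the mean value theorem---applied using that $\gap(A)$ and $\hatgap(A)$ lie in a closed subinterval of $(0,1)$ bounded away from both endpoints---gives $|\tilgap-\gap| \le (C_3/A)\, E(A)$. Dividing by $\gap$ and using $A\gap \ge c_2$ yields $|\tilgap/\gap - 1| \le C_4 E(A) \le \veps$, provided $\cL$ is chosen large enough relative to $1/\veps^2$.

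The delicate part is the middle paragraph: one must simultaneously rule out stopping too early---which would leave $\gap(A)$ small and blow up the derivative $(1-z)^{1/A-1}/A$ in the mean value estimate---and stopping too late---which would push $A\gap$ past the safe range where the Bernoulli lower bound on $\gap(a)(n/a)$ holds. Both failure modes are controlled by insisting that the stopping threshold $0.31$ exceed the noise level $E(a^\star)$ by an absolute constant, and this single requirement determines the size of the polylogarithmic prefactor $\cL$. After that, the remaining arguments are elementary calculus on $f$.
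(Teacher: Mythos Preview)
Your proposal is essentially the paper's proof: apply \cref{thm:err} to each $2^k$-skipped chain under a union bound, show the doubling trick stops with $A\gap$ pinched between absolute constants, and translate the additive error on $\gap(A)$ into a relative error on $\gap$ via the mean value theorem on $x\mapsto 1-(1-x)^{1/A}$. Two small points of comparison are worth noting.

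First, the paper union-bounds over only $K_{\gap}+1=\lfloor\log_2(1/\gap)\rfloor+1$ scales (after proving the algorithm must halt by then), whereas you union-bound over all $\lceil\log_2 n\rceil+1$ scales. Both work, but the paper's choice keeps $\cL$ free of $n$ without having to unwind a mild circularity.

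Second, your mean-value step asserts that $\gap(A)$ and $\hatgap(A)$ lie in a closed subinterval bounded away from $1$; this is essential since $f'(x)=(1/A)(1-x)^{1/A-1}$ blows up at $x=1$, but you do not justify the upper end. The paper's argument (its \cref{Prop:gammaA}) supplies exactly this: if $A>1$ then the algorithm did \emph{not} stop at $A/2$, so $\hatgap(A/2)\le 0.31$, hence $\gap(A/2)\le 0.31+E(A/2)$, and therefore $\gap(A)=1-(1-\gap(A/2))^2$ is bounded away from $1$ by an absolute constant; the $A=1$ case is handled separately (or trivially, since then $f'=1$). With that one line added, your argument is complete and matches the paper's.
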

  
The definition of $\cL$ is in \cref{Eq:LDef}.
The proof of \cref{thm:bootstrap} is given in \cref{sec:proof-bootstrap}.
The result shows that to estimate \emph{both} $\pimin$ and $\gap$ to within
constant multiplicative factors, a single sequence of length
$\tilde{O}\parens{1/(\pimin\gap)}$ suffices.

  \section{A posteriori confidence intervals}\label{sec:empirical}
In this section, we describe and analyze a procedure for constructing
confidence intervals for the stationary probabilities and the spectral gap
$\gap$.

\subsection{Procedure}

We first note that the point estimators from \cref{thm:err} and
\cref{thm:bootstrap} fall short of being directly suitable for obtaining a
fully empirical, a posteriori confidence interval for $\gap$ and $\pimin$.
This is because the deviation terms themselves depend inversely both
on $\gap$ and $\pimin$, and hence can never rule out $0$ (or an
arbitrarily small positive value) as a possibility for $\gap$ or
$\pimin$.\footnote{%
  Using \cref{thm:err}, it is possible to trap $\gap$ in the
  union of \emph{two} empirical confidence intervals---one around
  $\hatgap$ and the other around zero, both of which shrink in width
  as the sequence length increases.%
}
In effect, the fact that the Markov chain could be slow mixing and the
long-term frequency of some states could be small makes it difficult
to be confident in the estimates provided by $\hatgap$ and
$\hatpimin$.

\begin{algorithm}[t]
\caption{Confidence intervals}
\label{alg:empest}
\begin{algorithmic}[1]
  \renewcommand\algorithmicrequire{\textbf{Input}:}
  \REQUIRE
    Sample path $(X_1,X_2,\dots,X_n)$,
    confidence parameter $\delta \in (0,1)$.

  \STATE Compute state visit counts and smoothed transition
  probability estimates:
  \begin{equation}
    \begin{aligned}
      N_i & :=
      \Abs{
        \Braces{
          t \in [n-1] : X_t = i
        }
      }
      , \quad i \in [d] ; \\
      N_{i,j} & :=
      \Abs{
        \Braces{
          t \in [n-1] : (X_t,X_{t+1}) = (i,j)
        }
      }
      , \quad (i,j) \in [d]^2 ; \\
      \wh{P}_{i,j}
      & :=
      \frac{N_{i,j} + 1/d}{N_i + 1}
      , \quad (i,j) \in [d]^2 .
    \end{aligned}
    \notag
  \end{equation}
  \label{step:P}

  \STATE Let $\giAh$ be the group inverse of $\wh\vA := \vI -
  \wh\vP$.
  \label{step:gi}

  \STATE Let $\hat\vpi \in \Delta^{d-1}$ be the unique stationary
  distribution for $\wh\vP$.
  \label{step:pi}

  \STATE Compute eigenvalues $\hat\lambda_1 {\geq} \hat\lambda_2
  {\geq} \dotsb {\geq} \hat\lambda_d$ of $\Sym(\wh\vL)$, where $\wh\vL
  := \Diag(\hat\vpi)^{1/2} \wh\vP \Diag(\hat\vpi)^{-1/2}$.
  \label{step:eig}

  \STATE Spectral gap estimate:
  \[ \hatgap := 1 - \max\braces{ \hat\lambda_2,\, |\hat\lambda_d| } . \]
  \label{step:gap}

  \STATE Bounds for $|\wh{P}_{i,j}{-}P_{i,j}|$ for $(i,j) \in [d]^2$:
  $c:=1.1$,
  $\emptail
    := \inf
    \braces{
      t\geq0 :
      2d^2 \parens{ 1 + \ceil{\log_c\frac{2n}{t}}_+ } e^{-t} \leq
      \delta
    }$,
    and
  \begin{equation}
    \wh{B}_{i,j}
    :=
    \Parens{
      \sqrt{\frac{c\emptail}{2N_i}}
      + \sqrt{
        \frac{c\emptail}{2N_i}
        + 
        \sqrt{\frac{2c\wh{P}_{i,j}(1-\wh{P}_{i,j})\emptail}{N_i}}
        + \frac{\frac43\emptail + \abs{\wh{P}_{i,j}-\frac1d}}{N_i}
      }
    }^2
    .
    \notag
  \end{equation}
  \label{step:P-bound}

  \STATE Relative sensitivity of $\vpi$:
  \begin{equation}
    \hat\kappa :=
    \frac12
    \max
    \Braces{
      \wh{A}_{j,j}^\# - \min\Braces{ \wh{A}_{i,j}^\# : i \in [d] }
      : j \in [d]
    } 
    .
    \notag
  \end{equation}
  \label{step:sens}

  \STATE Bounds for $\max_{i \in [d]} |\hat{\pi}_i -
  \pi_i|$ and
  $\max\bigcup_{i\in[d]}
  \braces{
    \abs{\sqrt{\pi_i/\hat\pi_i}-1},\,
    \abs{\sqrt{\hat\pi_i/\pi_i}-1}
  }$:
  \begin{equation}
    \hat{b} := \hat\kappa \max\Braces{
      \wh{B}_{i,j}
      : (i,j) \in [d]^2
    }
    , \qquad
    \hat\rho := \frac12 \max \bigcup_{i\in[d]}
    \Braces{
      \frac{\hat{b}}{\hat\pi_i},\,
      \frac{\hat{b}}{\brackets{\hat\pi_i-\hat{b}}_+}
    }
    .
    \notag
  \end{equation}
  \label{step:pi-bound}

  \STATE Bounds for $\abs{\hatgap-\gap}$:
  \begin{equation}
    \hat{w} := 2\hat\rho + \hat\rho^2
    + (1+2\hat\rho+\hat\rho^2)
    \Biggl(
      \sum_{(i,j)\in[d]^2} \frac{\hat\pi_i}{\hat\pi_j} \hat{B}_{i,j}^2
    \Biggr)^{1/2} .
    \notag
  \end{equation}
  \label{step:gap-bound}

\end{algorithmic}
\end{algorithm}

The main idea behind our procedure, given as \cref{alg:empest}, is to use the
Markov property to eliminate the dependence of the confidence intervals on the
unknown quantities (including $\pimin$ and $\gap$).
Specifically, we estimate the transition probabilities from the sample path
using simple state visit counts: as a consequence of the Markov property, for
each state, the frequency estimates converge at a rate that depends only on the
number of visits to the state, and in particular the rate (given the visit
count of the state) is independent of the mixing time of the chain.

With confidence intervals for the entries of $\vP$ in hand, it is possible to
form a confidence interval for $\gap$ based on the eigenvalues of an estimated
transition probability matrix by appealing to the Ostrowski-Elsner
theorem (cf.~Theorem 1.4 on Page 170 of \textcite{stewart1990matrix}.)
However, directly using this perturbation result leads to very wide intervals,
shrinking only at a rate of $O(n^{-1/(2d)})$.
We avoid this slow rate by constructing confidence intervals for the symmetric
matrix $\vL$, so that we can use a stronger perturbation result (namely Weyl's
inequality, as in the proof of \cref{thm:err}) available for symmetric
matrices.

To form an estimate of $\vL$ based on an estimate of the transition
probabilities, one possibility is to estimate $\vpi$ using state visit counts
as was done in \cref{sec:rates}, and appeal to the relation $\vL =
\Diag(\vpi)^{1/2} \vP \Diag(\vpi)^{-1/2}$ to form a plug-in estimate of $\vL$.
However, it is not clear how to construct a confidence interval for the entries
of $\vpi$ because the accuracy of this estimator depends on the unknown mixing
time.

We adopt a different strategy for estimating $\vpi$.
We form the matrix $\wh\vP$ using smoothed frequency estimates of $\vP$
(Step~\ref{step:P}), then compute the \emph{group inverse} $\giAh$ of $\wh\vA =
\vI - \wh\vP$ (Step~\ref{step:gi}), followed by finding the unique stationary
distribution $\hat\vpi$ of $\wh\vP$ (Step~\ref{step:pi}), this way decoupling
the bound on the accuracy of $\hat\vpi$ from the mixing time.
The group inverse $\giAh$ of $\wh\vA$ is uniquely defined; and if $\wh\vP$
defines an ergodic chain (which is the case here due to the use of the smoothed
estimates), $\giAh$ can be computed at the cost of inverting an
$(d{-}1){\times}(d{-}1)$ matrix \parencite[Theorem 5.2]{meyer1975role}.%
\footnote{%
\label{ftnote:group-inverse}
The group inverse of a square matrix $\vA$, a special case of the {\em Drazin inverse},
is the unique matrix $\vA^\#$ satisfying
$ \vA\vA^\#\vA = \vA$,
$\vA^\#\vA\vA^\# = \vA^\#$ and
$\vA^\#\vA = \vA\vA^\#$.
}
Further, given $\giAh$, the unique stationary distribution $\hat\vpi$ of
$\wh\vP$ can be read out from the last row of $\giAh$ \parencite[Theorem
5.3]{meyer1975role}.
The group inverse is also used to determine the relative sensitivity of
$\hat\vpi$ to $\wh\vP$, which is quantified by
\begin{equation} \label{eq:kappadef}
\hat\kappa :=
    \frac12
    \max
    \Braces{
      \wh{A}_{j,j}^\# - \min\Braces{ \wh{A}_{i,j}^\#: i \in [d] }
      : j \in [d]
    } \,.
\end{equation}
We can regard $\hat\kappa$ as a plug-in estimator for $\kappa$, which is defined by substituting the group inverse $\vA^\#$ of $\vA$ in for $\wh\vA^\#$ in \cref{eq:kappadef}.

We can now follow the strategy based on estimating $\vL$ alluded to above.
Using $\hat\vpi$ and $\wh\vP$, we construct the plug-in estimate $\wh\vL$ of
$\vL$, and use the eigenvalues of its symmetrization to form the estimate
$\hatgap$ of the spectral gap (Steps~\ref{step:eig} and~\ref{step:gap}).
In the remaining steps, we use matrix perturbation analyses to relate
$\hat\vpi$ and $\vpi$, viewing $\vP$ as the perturbation of $\wh\vP$; and also
to relate $\hatgap$ and $\gap$, viewing $\vL$ as a perturbation of
$\Sym(\wh\vL)$.
Both analyses give error bounds entirely in terms of observable quantities
(e.g., $\hat\kappa$), tracing back to empirical error bounds for the estimate
of $\vP$.

The most computationally expensive step in \cref{alg:empest} is the
computation of the group inverse $\giAh$, which, as noted earlier, reduces to
matrix inversion.
Thus, with a standard implementation of matrix inversion, the
algorithm's time complexity is $O(n + d^3)$, while its space
complexity is $O(d^2)$.

\subsection{Main result}

We now state our main theorems.
Below, the big-$O$ notation should be interpreted as follows.
For a random sequence $(Y_n)_{n\geq1}$ and a (non-random) positive sequence
$(\veps_{\theta,n})_{n\geq1}$ parameterized by $\theta$, we say ``$Y_n =
O(\veps_{\theta,n})$ holds almost surely as $n\to\infty$'' if there is some
universal constant $C>0$ such that for all $\theta$, $\limsup_{n\to\infty}
Y_n/\veps_{\theta,n} \leq C$ holds almost surely.%

\begin{theorem}
  \label{thm:empirical}
  Suppose \cref{alg:empest} is given as input a sample path of length
  $n$ from an ergodic and reversible Markov chain and confidence
  parameter $\delta \in (0,1)$.
  Let $\gap>0$ denote the spectral gap, $\vpi$ the unique stationary
  distribution, and $\pimin>0$ the minimum stationary probability.
  Then, on an event of probability at least $1-\delta$,
  \[
    \pi_i \in [\hat\pi_i-\hat{b},\hat\pi_i+\hat{b}]
    \quad \text{for all $i \in [d]$} ,
    \qquad\text{and}\qquad
    \gap \in [\hatgap-\hat{w},\hatgap+\hat{w}]
    .
  \]
  Moreover,
  \[
    \hat{b}
    =
    O\Parens{
      \max_{(i,j) \in [d]^2}
      \kappa
      \sqrt{\frac{P_{i,j}\log\log n}{\pi_i n}}
    }
    ,
    \quad
    \hat{w}
    =
    O\Parens{ 
      \frac{\kappa}{\pimin} \sqrt{\frac{\log\log n}{\pimin n}} + 
      \sqrt{ \frac{d\log\log n}{ \pimin n}}
    }
  \] almost surely as $n \to \infty$.
\end{theorem}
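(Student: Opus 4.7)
The proof proceeds in three stages mirroring the dataflow of \cref{alg:empest}. In stage one I establish that on a single event of probability at least $1-\delta$, the observable quantity $\wh{B}_{i,j}$ simultaneously traps $|\wh{P}_{i,j}-P_{i,j}|$ for every $(i,j)\in[d]^2$. In stage two I convert this entrywise bound into a confidence bound for $\pi_i$ via a group-inverse perturbation identity for stationary distributions (\textcite{meyer1975role}). In stage three I symmetrize the plug-in estimate of $\vL$ and invoke Weyl's inequality to translate the bounds for $\wh\vP$ and $\hat\vpi$ into a bound on $|\hatgap-\gap|$. The asymptotic rate statement then follows by substituting the limiting behavior of the empirical quantities into the explicit formulas for $\hat b$ and $\hat w$. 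The conceptually most delicate step is stage one, for two reasons: the sample size $N_i$ used to estimate the $i$-th row of $\vP$ is random, and the Laplace-style smoothing in $\wh{P}_{i,j}=(N_{i,j}+1/d)/(N_i+1)$ introduces a small bias.

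The Markov property ensures that, conditional on the random visit set $\{t:X_t=i\}$, the successor observations $\{X_{t+1}:X_t=i\}$ are iid with law $P_{i,\cdot}$. Applied to this conditional iid sample, an empirical Bernstein inequality yields a deviation of order $\sqrt{\wh{P}_{i,j}(1-\wh{P}_{i,j})t/N_i}+t/N_i$ at failure probability $e^{-t}$, together with the additive correction $|\wh{P}_{i,j}-1/d|/N_i$ that absorbs the smoothing bias (since $|\wh{P}_{i,j}-N_{i,j}/N_i|=|1/d-N_{i,j}/N_i|/(N_i+1)$). Because $N_i$ ranges over $\{0,1,\dots,n-1\}$, I apply a geometric peeling with ratio $c=1.1$ together with a union bound over the $d^2$ entries to obtain a uniform-in-$N_i$ statement; this produces the factor $2d^2(1+\lceil\log_c(2n/t)\rceil_+)e^{-t}$ in the definition of $\emptail$. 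Inverting the resulting implicit inequality for $|\wh{P}_{i,j}-P_{i,j}|$—which is implicit because $\wh{P}_{i,j}$ itself appears in the variance proxy—produces the $(\sqrt{a}+\sqrt{a+b})^2$ closed form of $\wh{B}_{i,j}$.

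For the second stage, the group-inverse perturbation identity combined with the column-swing argument yields $\max_j|\pi_j-\hat\pi_j|\le \hat\kappa \max_{i,k}|\wh{P}_{i,k}-P_{i,k}|$; combined with stage one this gives $\max_i|\pi_i-\hat\pi_i|\le \hat b$ on the good event, and an elementary manipulation based on $|\sqrt{1+x}-1|\le|x|/(2[1-|x|]_+)$ then converts this additive bound into the multiplicative deviations $|\sqrt{\pi_i/\hat\pi_i}-1|,|\sqrt{\hat\pi_i/\pi_i}-1|\le\hat\rho$. For the third stage, I decompose $\Sym(\wh\vL)-\vL$ into a contribution from $\wh{\vP}-\vP$, weighted entrywise by $\sqrt{\hat\pi_i/\hat\pi_j}$ and controlled in Frobenius norm by $\bigl(\sum_{i,j}(\hat\pi_i/\hat\pi_j)\wh{B}_{i,j}^2\bigr)^{1/2}$, plus a contribution from $\hat\vpi-\vpi$ acting through the $\Diag(\cdot)^{\pm 1/2}$ conjugation, which multiplies the first contribution by $(1+\hat\rho)^2-1=2\hat\rho+\hat\rho^2$. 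Adding gives $\|\Sym(\wh\vL)-\vL\|\le\hat w$, and Weyl's inequality (valid because both matrices are symmetric) then delivers $|\hatgap-\gap|\le\hat w$.

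The asymptotic rates follow from three standard facts. The ergodic theorem gives $N_i/n\to\pi_i$ and $\wh{P}_{i,j}\to P_{i,j}$ almost surely, so $\hat\kappa\to\kappa$ by continuity of the group inverse at any ergodic $\vP$; the choice of $\delta$ in the definition of $\emptail$ makes $\emptail=\Theta(\log\log n)$ as $n\to\infty$, reflecting the law of the iterated logarithm. Substituting these into $\wh{B}_{i,j}$ gives $\wh{B}_{i,j}=O(\sqrt{P_{i,j}\log\log n/(\pi_i n)})$ almost surely, and multiplying by $\hat\kappa$ delivers the stated rate for $\hat b$. For $\hat w$, the term $2\hat\rho+\hat\rho^2$ inherits the $\hat b/\pimin$ rate from the multiplicative conversion and thus contributes $O(\kappa\pimin^{-1}\sqrt{\log\log n/(\pimin n)})$; the Frobenius sum collapses via the reversibility identity $\sum_i P_{i,j}=\pi_j\sum_i P_{j,i}/\pi_i\le\pi_j/\pimin$ to $O(d\log\log n/(\pimin n))$, producing the second summand $\sqrt{d\log\log n/(\pimin n)}$ in $\hat w$.
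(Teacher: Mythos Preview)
Your overall architecture matches the paper's proof exactly: Freedman/Bernstein-type concentration for the row estimates with geometric peeling over the random $N_i$, the Cho--Meyer/Haviv--Van der Heyden group-inverse perturbation bound for $\hat\vpi$, the decomposition of $\wh\vL-\vL$ into $\errP$- and $\errpil,\errpir$-type pieces bounded via Frobenius norm and $\hat\rho$, and the asymptotic substitution using $N_i/n\to\pi_i$, $\hat\kappa\to\kappa$, and $\emptail=O(\log\log n)$.

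There is, however, one genuine error in stage one. The claim ``conditional on the random visit set $\{t:X_t=i\}$, the successor observations $\{X_{t+1}:X_t=i\}$ are iid with law $P_{i,\cdot}$'' is false: if the visit set contains both $t$ and $t+1$, then the successor at time $t$ is forced to be $i$, so conditioning on the visit set destroys the iid structure. What is true (by the strong Markov property at successive hitting times of $i$) is that the successors form an \emph{unconditionally} iid sequence, of which you observe a random prefix of length $N_i$; but $N_i$ is not independent of that sequence, so you cannot condition on it and then apply an iid Bernstein bound. The paper handles this correctly by working with the martingale $Y_t=\ind{X_{t-1}=i}(\ind{X_t=j}-P_{i,j})$ and applying Freedman's inequality, whose predictable-variance term is exactly $N_iP_{i,j}(1-P_{i,j})$; the geometric peeling is then over the predictable variance, which is equivalent to peeling over $N_i$. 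Your stage-one argument becomes correct once you replace the conditional-iid step with this martingale formulation (or, equivalently, apply iid Bernstein to every fixed prefix length $m$ of the unconditional iid sequence and union-bound over $m$).

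A minor point: your description of stage three slightly understates the decomposition. The $\hat\vpi$-perturbation does not merely multiply the Frobenius term by $(1+\hat\rho)^2-1$; it also contributes the standalone additive term $2\hat\rho+\hat\rho^2$ coming from $\errpil\wh\vL+\wh\vL\errpir+\errpil\wh\vL\errpir$ (using $\|\wh\vL\|\le1$). The formula for $\hat w$ you quote is nonetheless the correct one.
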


The proof of \cref{thm:empirical} is given in \cref{sec:proof-empirical}.
As mentioned above, the obstacle encountered in \cref{thm:err} is
avoided by exploiting the Markov property.
We establish fully observable upper and lower bounds on the entries of
$\vP$ that converge at a $\sqrt{(\log\log n)/n}$ rate using standard
martingale tail inequalities; this justifies the validity of the
bounds from Step~\ref{step:P-bound}.
Properties of the group inverse \parencites{meyer1975role,cho2001comparison} and eigenvalue
perturbation theory \parencites{stewart1990matrix} are used to validate the
empirical bounds on $\pi_i$ and $\gap$ developed in the remaining
steps of the algorithm.

The first part of \cref{thm:empirical} provides valid empirical
confidence intervals for each $\pi_i$ and for $\gap$, which are
simultaneously valid at confidence level $\delta$.
The second part of \cref{thm:empirical} shows that the width of the intervals
decrease as the sequence length increases.
The rate at which the widths shrink is given in terms of $\vP$, $\vpi$, $\kappa$, and $n$.
We show in \cref{sec:asymptotic} (\cref{claim:kappa-bound}) that
\[
\kappa \leq \frac{1}{\gap}\min\{d, 8 + \log(4/\pimin)\} ,
\]
and hence
\begin{align*}
 \hat{b}
  & =
  O\Parens{
    \max_{(i,j) \in [d]^2}
    \frac{\min\{d,\log(1/\pimin)\}}{\gap}
    \sqrt{\frac{P_{i,j}\log\log n}{\pi_i n}}
  }
  , \\
  \hat{w}
  & =
  O\Parens{ 
    \frac{\min\{d,\log(1/\pimin)\}}{\pimin\gap} \sqrt{\frac{\log\log n}{\pimin n}}
  } .
\end{align*}

It is easy to combine \cref{thm:err,thm:empirical} to yield intervals
whose widths shrink at least as fast as both the non-empirical
intervals from \cref{thm:err} and the empirical intervals from
\cref{thm:empirical}.
Specifically, determine lower bounds on $\pimin$ and $\gap$ using
\cref{alg:empest},
 $\pimin \geq \min_{i \in [d]} \brackets{ \hat\pi_i - \hat{b} }_+$
  , %
 $\gap \geq \brackets{ \hatgap - \hat{w} }_+$%
  ;
then plug-in these lower bounds for $\pimin$ and $\gap$ in the
deviation bounds in \cref{eq:gapbound} from \cref{thm:err}.
This yields a new interval centered around the estimate of $\gap$ from
\cref{thm:err} and the new interval no longer depends on unknown quantities.
The interval is a valid $1-2\delta$ probability confidence interval
for $\gap$, and for sufficiently large $n$, the width shrinks at the
rate given in \cref{eq:gapbound}.
We can similarly construct an empirical confidence interval for
$\pimin$ using \cref{eq:piminbound}, which is valid on the same
$1-2\delta$ probability event.\footnote{%
  For the $\pimin$ interval, we only plug-in lower bounds on $\pimin$
  and $\gap$ only where these quantities appear as $1/\pimin$ and
  $1/\gap$ in \cref{eq:piminbound}.
  It is then possible to ``solve'' for observable bounds on $\pimin$.
  See \cref{sec:proof-combined} for details.%
}
Finally, we can take the intersection of these new intervals with the
corresponding intervals from \cref{alg:empest}.
This is summarized in the following \lcnamecref{thm:combined}, which
we prove in \cref{sec:proof-combined}.
\begin{theorem}
  \label{thm:combined}
  The following holds under the same conditions as
  \cref{thm:empirical}.
  For any $\delta \in (0,1)$, the confidence intervals $\wh{U}$ and
  $\wh{V}$ described above for $\pimin$ and $\gap$, respectively,
  satisfy $\pimin \in \wh{U}$ and $\gap \in \wh{V}$ with probability
  at least $1-2\delta$.
  Furthermore, 
   $|\wh{U}|
    =
    O\Parens{
      \sqrt{\frac{\pimin\log\frac{d}{\pimin\delta}}{\gap n}}
    }$%
    and
   $|\wh{V}|
    =
    O\Parens{
      \min\Braces{
        \sqrt{\frac{\log\frac{d}{\delta}\cdot\log(n)}{\pimin\gap n}}
        ,\,
        \hat{w}
      }
    }$
  almost surely as $n \to \infty$,
  where $\hat{w}$ is the width from \cref{alg:empest}.
\end{theorem}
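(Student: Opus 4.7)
The plan has three parts: validity by a union bound, an explicit construction of $\wh U$ and $\wh V$ that removes all references to unobservable quantities, and an asymptotic analysis of the resulting widths.

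\emph{Validity.} Let $\cE_1$ denote the $(1{-}\delta)$-probability event of \cref{thm:empirical} on which $\pi_i \in [\hat\pi_i - \hat b,\, \hat\pi_i + \hat b]$ for all $i \in [d]$ and $\gap \in [\hatgap - \hat w,\, \hatgap + \hat w]$, where $\hatpimin$, $\hatgap$, $\hat b$, $\hat w$ are the outputs of \cref{alg:empest}. Let $\cE_2$ denote the $(1{-}\delta)$-probability event of \cref{thm:err} on which the plug-in estimators $\hatpimin'$ and $\hatgap'$ from \cref{sec:rates-upper} satisfy \cref{eq:piminbound,eq:gapbound}. A union bound yields $\Pr(\cE_1 \cap \cE_2) \geq 1 - 2\delta$, and all subsequent reasoning takes place on this joint event.

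\emph{Construction.} On $\cE_1$, the observable quantities $p_\flat \defeq \min_i [\hat\pi_i - \hat b]_+$ and $g_\flat \defeq [\hatgap - \hat w]_+$ satisfy $p_\flat \leq \pimin$ and $g_\flat \leq \gap$. The right-hand side of \cref{eq:gapbound} is monotone decreasing in each of $\pimin$ and $\gap$ (both appear only as $1/\pimin$ and $1/\gap$, inside a square root and a logarithm), so substituting $p_\flat$ and $g_\flat$ everywhere $\pimin$ and $\gap$ appear gives an observable quantity $W_n$ with $|\hatgap' - \gap| \leq W_n$. Defining $\wh V \defeq [\hatgap' - W_n,\, \hatgap' + W_n] \cap [\hatgap - \hat w,\, \hatgap + \hat w]$ traps $\gap$ on $\cE_1 \cap \cE_2$. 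For $\wh U$, as the footnote to the statement indicates, the right-hand side of \cref{eq:piminbound} is \emph{not} monotone in $\pimin$ because of the leading $\sqrt{\pimin}$ factor, so we substitute $p_\flat,g_\flat$ only where $\pimin,\gap$ appear as $1/\pimin,1/\gap$ and leave the explicit $\sqrt{\pimin}$ untouched. This yields an inequality of the form $|\hatpimin' - \pimin| \leq \alpha \sqrt{\pimin} + \beta$ with observable $\alpha,\beta$, i.e., a quadratic in $\sqrt{\pimin}$. Solving this quadratic gives an observable interval for $\pimin$; intersecting it with $[\min_i \hat\pi_i - \hat b,\, \min_i \hat\pi_i + \hat b]$ from \cref{thm:empirical} defines $\wh U$.

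\emph{Widths and main obstacle.} The trivial bound $|\wh V| \leq 2\hat w$ handles one half of the minimum in the stated rate for $|\wh V|$. For the other half, the almost-sure rates from \cref{thm:empirical} give $p_\flat = \pimin(1+o(1))$ and $g_\flat = \gap(1+o(1))$ as $n\to\infty$, so the substitution into \cref{eq:gapbound} yields $W_n = O(\sqrt{\log(d/\delta)\,\log(n)/(\pimin\gap n)})$ almost surely. An analogous argument handles $|\wh U|$: after solving the quadratic, the leading term is $O(\sqrt{\pimin \log(d/(\pimin\delta))/(\gap n)})$, matching the rate of \cref{eq:piminbound}. The main technical obstacle is the $\wh U$ inversion step: one must verify that the quadratic has real roots for all sufficiently large $n$ (which follows from the $o(1)$ rate of $p_\flat \to \pimin$), select the correct root, and check that neither the root selection nor the final intersection inflates the width beyond the target rate. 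This amounts to a case split on whether the $\alpha\sqrt{\pimin}$ or $\beta$ term dominates in the quadratic, and is routine but somewhat tedious algebra.
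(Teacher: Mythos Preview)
Your proposal is correct and follows essentially the same route as the paper: a union bound over the events of \cref{thm:err} and \cref{thm:empirical}, substitution of the observable lower bounds $p_\flat,g_\flat$ from \cref{alg:empest} into the deviation inequalities of \cref{thm:err}, handling the non-monotone $\sqrt{\pimin}$ term in \cref{eq:piminbound} by solving the resulting quadratic (the paper phrases this as ``using the approach from the proof of \cref{lem:P-obs-bound}''), and then reading off the asymptotic widths from $p_\flat\to\pimin$, $g_\flat\to\gap$. Your discussion of the quadratic-inversion step is more detailed than what the paper actually writes out, but the argument is the same.
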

Finally, note that a stopping rule that stops when $\gap$ and $\pimin$ are
estimated with a given relative error $\epsilon$ can be obtained as follows.
At time $n$:
\begin{algorithmic}[1]
  \IF{$n = 2^k$ for an integer $k$}
    \STATE
      Run \cref{alg:empest} (or the improved variant from \cref{thm:combined})
      with inputs $(X_1,X_2,\dotsc,X_n)$ and $\delta/(k(k+1))$ to obtain
      intervals for $\pimin$ and $\gap$.

    \STATE
      Stop if, for each interval, the interval width divided by the lower bound
      on estimated quantity falls below $\epsilon$. 

  \ENDIF
\end{algorithmic}
It is easy to see then that with probability $1-\delta$, the algorithm only
stops when the relative accuracy of its estimate is at least $\epsilon$.
Combined with the lower bounds, we conjecture that the expected stopping time
of the resulting procedure is optimal up to $\log$ factors. \todoc{I think this
is true. Are we opening a can of worms?}

 \section{Proofs of \cref{thm:lb-pimin,thm:lb-gap}}\label{sec:proof-lower}
In this section, we prove \cref{thm:lb-pimin} and \cref{thm:lb-gap}.

\subsection{Proof of \cref{thm:lb-pimin}}

  Fix $\bar\pi \in (0,1/4)$.
  Consider two Markov chains given by the following stochastic
  matrices:
  \[
    \vP^{(1)} :=
    \begin{bmatrix}
      1-\bar\pi & \bar\pi \\
      1-\bar\pi & \bar\pi
    \end{bmatrix}
    , \quad
    \vP^{(2)} :=
    \begin{bmatrix}
      1-\bar\pi & \bar\pi \\
      1/2 & 1/2
    \end{bmatrix}
    .
  \]
  Each Markov chain is ergodic and reversible; their stationary
  distributions are, respectively, $\vpi^{(1)} = (1-\bar\pi,\bar\pi)$
  and $\vpi^{(2)} = (1/(1+2\bar\pi),2\bar\pi/(1+2\bar\pi))$.
  We have $\pimin \geq \bar\pi$ in both cases.
  For the first Markov chain, $\slem = 0$, and hence the spectral gap
  is $1$; for the second Markov chain, $\slem = 1/2-\bar\pi$, so the
  spectral gap is $1/2+\bar\pi$.

  In order to guarantee $|\hatgap - \gap| < 1/8 < |1 -
  (1/2+\bar\pi)|/2$, it must be possible to distinguish the two Markov
  chains.
  Assume that the initial state distribution has mass at least $1/2$
  on state $1$.
  (If this is not the case, we swap the roles of states $1$ and $2$ in
  the constructions above.)
  With probability at least half, the initial state is $1$; and both
  chains have the same transition probabilities from state $1$.
  The chains are indistinguishable unless the sample path eventually
  reaches state $2$.
  But with probability at least $3/4$, a sample path of length $n <
  1/(4\bar\pi)$ starting from state $1$ always remains in the same
  state (this follows from properties of the geometric distribution
  and the assumption $\bar\pi < 1/4$).
  \qed

\subsection{Proof of \cref{thm:lb-gap}}
  We consider $d$-state Markov chains of the following form:
  \[
    P_{i,j} =
    \begin{cases}
      1-\veps_i & \text{if $i = j$} ; \\
      \displaystyle\frac{\veps_i}{d-1} & \text{if $i \neq j$}
    \end{cases}
  \]
  for some $\veps_1, \veps_2, \dotsc, \veps_d \in (0,1)$.
  Such a chain is ergodic and reversible, and its unique stationary
  distribution $\vpi$ satisfies
  \[
    \pi_i = \frac{1/\veps_i}{\sum_{j=1}^d 1/\veps_j}
    .
  \]
  We fix $\veps := \frac{d-1}{d/2}\bar\gamma$ and set $\veps' :=
  \frac{d/2-1}{d-1} \veps < \veps$.
  Consider the following $d+1$ different Markov chains of the type
  described above:
  \begin{itemize}
    \item
      $\vP^{(0)}$: $\veps_1 = \dotsb = \veps_d = \veps$.
      For this Markov chain, $\lambda_2 = \lambda_d = \slem =
      1-\frac{d}{d-1}\veps$.

    \item
      $\vP^{(i)}$ for $i \in [d]$: $\veps_j = \veps$ for $j \neq i$,
      and $\veps_i = \veps'$.
      For these Markov chains, $\lambda_2 = 1 - \veps' -
      \frac{1}{d-1}\veps = 1 - \frac{d/2}{d-1} \veps$,
      and $\lambda_d = 1 - \frac{d}{d-1} \veps$.
      So $\slem = 1-\frac{d/2}{d-1} \veps$.

  \end{itemize}
  The spectral gap in each chain satisfies $\gap \in
  [\bar\gamma,2\bar\gamma]$; in $\vP^{(i)}$ for $i \in [d]$, it is half
  of what it is in $\vP^{(0)}$.
  Also $\pi_i \geq 1/(2d)$ for each $i \in [d]$.

  In order to guarantee $|\hatgap - \gap| < \bar\gamma/2$, it must
  be possible to distinguish $\vP^{(0)}$ from each $\vP^{(i)}$,
  $i\in[d]$.
  But $\vP^{(0)}$ is identical to $\vP^{(i)}$ except for the transition
  probabilities from state $i$.
  Therefore, regardless of the initial state, the sample path must
  visit all states in order to distinguish $\vP^{(0)}$ from each
  $\vP^{(i)}$, $i \in [d]$.
  For any of the $d+1$ Markov chains above, the earliest time in which
  a sample path visits all $d$ states
  stochastically dominates a generalized coupon collection time $T = 1 +
  \sum_{i=1}^{d-1} T_i$, where $T_i$ is the number of steps required to
  see the $(i+1)$-th distinct state in the sample path beyond the first
  $i$.
  The random variables $T_1,T_2,\dotsc,T_{d-1}$ are independent, and are
  geometrically distributed, $T_i \sim \Geom(\veps -
  (i-1)\veps/(d-1))$.
  We have that
  \[
    \bbE[T_i] = \frac{d-1}{\veps(d-i)} , \quad
    \var(T_i) = \frac{1 - \veps \frac{d-i}{d-1}}{%
      \Parens{ \veps \frac{d-i}{d-1} }^2
    }
    .
  \]
  Therefore
  \[
    \bbE[T] = 1 + \frac{d-1}{\veps} H_{d-1} , \quad
    \var(T) \leq \Parens{ \frac{d-1}{\veps} }^2 \frac{\pi^2}{6} 
  \]
  where $H_{d-1} = 1 + 1/2 + 1/3 + \dotsb + 1/(d-1)$.
  By the Paley-Zygmund inequality,
  \[
    \Pr\Parens{ T > \frac13\bbE[T] }
    \geq \frac{1}{1 + \frac{\var(T)}{(1-1/3)^2\bbE[T]^2}}
    \geq \frac{1}{1 + \frac{\Parens{ \frac{d-1}{\veps} }^2
    \frac{\pi^2}{6}}{(4/9)\Parens{ \frac{d-1}{\veps} H_2}^2}}
    \geq \frac14
    .
  \]
  Since $n < c d\log(d) / \bar\gamma \leq (1/3) (1 + (d-1) H_{d-1} /
  (2\bar\gamma)) = \bbE[T]/3$ (for an appropriate absolute constant
  $c$), with probability at least $1/4$, the sample path does not
  visit all $d$ states.
  \qed

 \section{Proof of \cref{thm:err}}\label{sec:proof-upper}
In this section, we prove Theorem~\ref{thm:err}.

\subsection{Accuracy of $\hatpimin$}

We start by proving the deviation bound on $\pimin-\hatpimin$, from
which we may easily deduce \cref{eq:piminbound} in Theorem~\ref{thm:err}.
\begin{lemma}
  \label{lem:hatpimin}
  Pick any $\delta \in (0,1)$, and let
  \begin{equation}
    \veps_n :=
    \frac{ \ln\Parens{\frac{d}\delta\sqrt{\frac{2}{\pimin}}} }{\gap n}
    .
    \label{eq:singleton-veps}
  \end{equation}
  With probability at least $1-\delta$, the following inequalities
  hold simultaneously:
  \begin{align}
    \Abs{\hat{\pi}_i-\pi_i}
    & \le
    \sqrt{8\pi_i(1-\pi_i) \veps_n}
    + 20 \veps_n
    \quad \text{for all $i \in [d]$}
    ;
    \label{eq:singletonboundsimple}
    \\
    \Abs{\hatpimin - \pimin}
    & \leq
    4\sqrt{\pimin\veps_n}
    + 47\veps_n
    .
    \label{eq:hatpimin}
  \end{align}
\end{lemma}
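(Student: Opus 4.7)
The plan is to reduce the lemma to a Bernstein-type concentration inequality for reversible Markov chains, applied separately, for each $i\in[d]$, to the indicator function $f_i(x) := \ind{x=i}$. Under $\vpi$, this function has mean $\pi_i$ and variance $\pi_i(1-\pi_i)$, and $\hat\pi_i = \frac1n \sum_{t=1}^n f_i(X_t)$. Using a standard Bernstein-type inequality for reversible chains \parencite{Paulin15}, one obtains a tail bound of the form
\[
  \Pr\Parens{|\hat\pi_i - \pi_i| > t}
  \;\leq\;
  2\, N_{\vpi^{(1)}}\, \exp\Parens{-\frac{c\, n \gap\, t^2}{\pi_i(1-\pi_i) + t}},
\]
where $N_{\vpi^{(1)}}$ is a prefactor accounting for the non-stationary start of the chain. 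Bounding the Radon--Nikodym derivative of $\vpi^{(1)}$ with respect to $\vpi$ in $L^2(\vpi)$ by $1/\sqrt{\pimin}$ gives $N_{\vpi^{(1)}} \leq \sqrt{2/\pimin}$, which is the origin of the $\sqrt{2/\pimin}$ factor inside the logarithm of $\veps_n$ in \cref{eq:singleton-veps}.

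To derive \cref{eq:singletonboundsimple}, set the right-hand side of the above display to $\delta/d$, solve for the resulting $t$ using the elementary implication that $t^2 \leq a(V+t)$ forces $t \leq \sqrt{2aV} + 2a$, and union-bound over $i \in [d]$. The specific constants $8$ and $20$ fall out of this inversion after applying $\pi_i(1-\pi_i)\leq \pi_i$ and some slack to absorb lower-order terms.

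The bound \cref{eq:hatpimin} on $\hatpimin - \pimin$ is then deduced from the simultaneous per-state bounds. Let $i_\star \in \argmin_i \pi_i$ and $j_\star \in \argmin_i \hat\pi_i$, so $\pimin = \pi_{i_\star}$ and $\hatpimin = \hat\pi_{j_\star}$. The upper direction is immediate: $\hatpimin - \pimin \leq \hat\pi_{i_\star} - \pi_{i_\star} \leq \sqrt{8\pimin(1-\pimin)\veps_n} + 20\veps_n \leq \sqrt{8\pimin\veps_n} + 20\veps_n$. For the lower direction, $\pimin - \hatpimin \leq \pi_{j_\star} - \hat\pi_{j_\star} \leq \sqrt{8\pi_{j_\star}(1-\pi_{j_\star})\veps_n} + 20\veps_n$, where $\pi_{j_\star}$ need not equal $\pimin$. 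However, combining $\hat\pi_{j_\star} \leq \hat\pi_{i_\star}$ with the upper-direction bound yields $\pi_{j_\star} \leq \pimin + O(\sqrt{\pimin\veps_n} + \veps_n)$; applying $\sqrt{a+b} \leq \sqrt{a} + \sqrt{b}$ then gives $\sqrt{\pi_{j_\star}\veps_n} \leq \sqrt{\pimin\veps_n} + O(\veps_n)$, and these corrections get absorbed into the leading constants to produce the stated $4\sqrt{\pimin\veps_n} + 47\veps_n$.

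The main obstacle I anticipate is two-fold: first, selecting (or deriving) a Markov-chain Bernstein inequality whose non-stationarity prefactor scales as $\sqrt{1/\pimin}$ rather than $1/\pimin$, which requires measuring the initial distribution in $L^2(\vpi)$ rather than in $L^\infty$ and is what makes the logarithmic dependence on $\pimin$ in $\veps_n$ only a $\frac12\log(1/\pimin)$ term; and second, tracking constants through the Bernstein inversion and the $\min$-to-$\min$ manipulation carefully enough to arrive at the specific numerics $8, 20, 4, 47$. Once the tail bound is fixed, the remainder is routine algebra.
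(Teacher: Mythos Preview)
Your proposal is correct and follows essentially the same strategy as the paper: apply Paulin's Bernstein inequality to the indicators $f_i = \ind{\cdot = i}$, handle the non-stationary start with a $\sqrt{1/\pimin}$-type prefactor, union bound over $i\in[d]$ to get \cref{eq:singletonboundsimple}, and then deduce \cref{eq:hatpimin} by comparing $\hat\pi$ at the true and empirical minimizers.

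One small clarification on the mechanism behind the $\sqrt{2/\pimin}$ factor: the paper does not bound an $L^2(\vpi)$ Radon--Nikodym norm of the specific initial distribution. Instead it invokes Paulin's Proposition~3.10, which gives the distribution-free inequality $\bbP(E)\le \sqrt{\pimin^{-1}\,\bbP^{\vpi}(E)}$ for \emph{any} starting law; taking the square root of the stationary Bernstein bound $2\exp\bigl(-n\gap\eps^2/(4\pi_i(1-\pi_i)+10\eps)\bigr)$ is exactly what doubles the denominator constants to $8$ and $20$ and produces the prefactor $\sqrt{2/\pimin}$ appearing in $\veps_n$. Your $L^2$-prefactor description would also work but yields slightly different (in fact sharper) constants, so the specific numerics $8,20$ will not ``fall out'' unless you use the square-root trick. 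For the lower direction of \cref{eq:hatpimin}, the paper closes the self-referential inequality $\pi_{j_\star}\le \hat\pi_{j_\star}+\sqrt{8\pi_{j_\star}\veps_n}+20\veps_n$ via the elementary implication $a\le b\sqrt{a}+c\Rightarrow a\le b^2+b\sqrt{c}+c$, which is exactly the quadratic-solving step you allude to but leave implicit.
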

\begin{proof}
  We use the following Bernstein-type inequality for Markov chains
  of \textcite[][Theorem~3.3]{Paulin15}: letting $\bbP^{\vpi}$ denote
  the probability with respect to the stationary chain (where the
  marginal distribution of each $X_t$ is $\vpi$), we have for every
  $\eps>0$,
  \[
    \bbP^{\vpi}
    \Parens{
      \abs{ \hat\pi_i-\pi_i} > \eps
    } \le
    2\exp\Parens{
      -\frac{
        n\gap\eps^2
      }{
        4\pi_i(1-\pi_i)+10\eps
      }
    },
    \qquad i\in[d]
    .
  \]
  To handle possibly non-stationary chains, as is our case, we combine
  the above inequality with \textcite[][Proposition 3.10]{Paulin15}, to
  obtain for any $\eps>0$,
  \begin{align*}
    \bbP
    \Parens{
      \abs{ \hat\pi_i-\pi_i} > \eps
    }
    &\le
    \sqrt{
      \frac1\pimin
      \bbP^{\vpi}
      \Parens{
        \abs{ \hat\pi_i-\pi_i} > \eps
      }
    } 
    \le
    \sqrt{\frac2\pimin}
    \exp\Parens{
      -\frac{
        n\gap\eps^2
      }{
        8\pi_i(1-\pi_i)+20\eps
      }
    }.
  \end{align*}
  Using this tail inequality with $\eps :=
  \sqrt{8\pi_i(1-\pi_i)\veps_n} + 20\veps_n$ and a union bound over
  all $i \in [d]$ implies that the inequalities
  in~\cref{eq:singletonboundsimple} hold with probability at least
  $1-\delta$.

  Now assume this $1-\delta$ probability event holds; it remains to
  prove that \cref{eq:hatpimin} also holds in this event.
  Without loss of generality, we assume that $\pimin = \pi_1 \leq
  \pi_2 \leq \dotsb \leq \pi_d$.
  Let $j \in [d]$ be such that $\hatpimin = \hat\pi_j$.
  By \cref{eq:singletonboundsimple}, we have $|\pi_i - \hat\pi_i| \leq
  \sqrt{8\pi_i\veps_n} + 20\veps_n$ for each $i \in \{1,j\}$.
  Since $\hatpimin \leq \hat\pi_1$,
  \[
    \hatpimin - \pimin
    \leq \hat\pi_1 - \pi_1
    \leq \sqrt{8\pimin\veps_n} + 20\veps_n
    \leq \pimin + 22\veps_n
  \]
  where the last inequality follows by the AM/GM inequality.
  Furthermore, using the fact that $a \leq b\sqrt{a} + c \Rightarrow a
  \leq b^2 + b\sqrt{c} + c$ for nonnegative numbers $a, b, c \geq
  0$~\parencite[see, e.g.,][]{BBL04} with the inequality $\pi_j \leq
  \sqrt{8\veps_n} \sqrt{\pi_j} + \parens{\hat\pi_j + 20\veps_n}$ gives
  \[
    \pi_j
    \leq 
    \hat\pi_j
    + \sqrt{8(\hat\pi_j + 20\veps_n)\veps_n}
    + 28\veps_n
    .
  \]
  Therefore
  \[
    \pimin - \hatpimin
    \leq \pi_j - \hat\pi_j
    \leq \sqrt{8(\hatpimin + 20\veps_n)\veps_n} + 28\veps_n
    \leq \sqrt{8(2\pimin + 42\veps_n)\veps_n} + 28\veps_n
    \leq 4\sqrt{\pimin\veps_n} + 47\veps_n
  \]
  where the second-to-last inequality follows from the above bound on
  $\hatpimin - \pimin$, and the last inequality uses $\sqrt{a+b} \leq
  \sqrt{a} + \sqrt{b}$ for nonnegative $a,b \geq 0$.
\end{proof}

\subsection{Accuracy of $\hatgap$}

Let us now turn to proving \cref{eq:gapbound}, i.e., 
the bound on the error of the spectral gap estimate $\hatgap$.
The accuracy of $\hatgap$ is based on the accuracy of $\Sym(\wh\vL)$
in approximating $\vL$ via Weyl's inequality:
\begin{align*}
  |\hat\lambda_i - \lambda_i|
  \leq \norm{\Sym(\wh\vL) - \vL}
  \quad \text{for all }i \in [d] .
\end{align*}
Moreover, the triangle inequality implies that symmetrizing $\wh\vL$
can only help:
\begin{align*}
  \norm{\Sym(\wh\vL) - \vL} \leq \norm{\wh\vL - \vL} .
\end{align*}
Therefore, we can deduce \cref{eq:gapbound} in \cref{thm:err} from the
following lemma.
\begin{lemma}
  \label{lem:gap}
  There exists an absolute constant $C>0$ such that the following
  holds.
  For any $\delta \in (0,1)$,
  if
  \begin{equation}
    n \geq C \Parens{
      \frac{\log\frac{1}{\pimin\delta}}{\pimin\gap}
      + \frac{\log n}{\gap}
    }
    ,
    \label{eq:min-n}
  \end{equation}
  then with probability at least $1-\delta$,
  the bounds from \cref{lem:hatpimin} hold, and
  \begin{equation*}
    \norm{\wh\vL - \vL}
    \leq
    C\Parens{ \sqrt{\veps} + \veps + \veps^2 }
    ,
  \end{equation*}
  where
  \begin{align*}
    \veps
    & := \frac{ \Parens{\log\frac{d}{\delta}} \Parens{\log\frac{n}{\pimin\delta}} }{\pimin\gap n}
    .
  \end{align*}
\end{lemma}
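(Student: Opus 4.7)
My plan is to split the error $\wh\vL - \vL$ into a part that reflects the deviation of $\wh\vM$ from $\vM$ and a part that reflects the deviation of $\hat\vpi$ from $\vpi$, and then bound these via a matrix Bernstein inequality (combined with the Bernstein blocking trick) and the already-proved \cref{lem:hatpimin}, respectively. Writing $\vD := \Diag(\vpi)^{1/2}$ and $\hat\vD := \Diag(\hat\vpi)^{1/2}$, I would use the three-term decomposition
\[
  \wh\vL - \vL
  = \hat\vD^{-1}(\wh\vM-\vM)\hat\vD^{-1}
  + (\hat\vD^{-1}-\vD^{-1})\vM\hat\vD^{-1}
  + \vD^{-1}\vM(\hat\vD^{-1}-\vD^{-1}).
\]
For the second and third summands, invoke \cref{lem:hatpimin} at confidence $\delta/2$: when $C$ in \cref{eq:min-n} is large enough, its conclusion gives $|\hat\pi_i - \pi_i| \le \pi_i/2$ simultaneously for all $i$, so $\hat\pi_i \in [\pi_i/2,3\pi_i/2]$ and $\|\hat\vD^{-1}\| \le \sqrt{2/\pimin}$. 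Combining the algebraic identity $\hat\pi_i^{-1/2} - \pi_i^{-1/2} = (\pi_i - \hat\pi_i)/[\sqrt{\pi_i\hat\pi_i}(\sqrt{\pi_i}+\sqrt{\hat\pi_i})]$ with \cref{eq:singletonboundsimple} bounds $\|\hat\vD^{-1}-\vD^{-1}\|$ entrywise, and multiplying by $\|\vM\|\le 1$ and $\|\hat\vD^{-1}\|$ bounds these two summands by $O(\sqrt{\veps}+\veps+\veps^2)$ after absorbing the logarithmic discrepancy between the $\veps$ of this lemma and the $\veps_n$ of \cref{lem:hatpimin} into the constant.

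The main technical work is bounding the first summand $\|\hat\vD^{-1}(\wh\vM-\vM)\hat\vD^{-1}\|$. Write $\wh\vM = (n-1)^{-1}\sum_{t=1}^{n-1}\vZ_t$ with $\vZ_t := \ve_{X_t}\ve_{X_{t+1}}^\t$; the summands are dependent, so the matrix Bernstein inequality cannot be applied directly. Following the approach sketched just after \cref{thm:err}, I would partition $[n-1]$ into $2m$ contiguous blocks of length $a = \Theta(\log(n/(\pimin\delta))/\gap)$, chosen so that $a$ is a constant multiple of $\tmix(\delta/n) \le \trelax\log(n/(\pimin\delta))$, and split the blocks into an even-indexed and an odd-indexed subsequence. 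Via the Bernstein--Yu coupling \parencite{Yu94}, each subsequence can be replaced by an iid copy at total-variation cost $O(\delta)$. The matrix Bernstein inequality \parencite{tropp2015intro} then applies to each iid block sum; the matrix variance is controlled via the identities $\vZ_t\vZ_t^\t = \ve_{X_t}\ve_{X_t}^\t$ and $\bbE^\vpi[\vZ_t\vZ_t^\t] = \Diag(\vpi)$ (and the analogue on the other side), while the non-stationary initial law is handled by \textcite[Proposition~3.10]{Paulin15} at the cost of a $\sqrt{1/\pimin}$ factor that is absorbed under \cref{eq:min-n}. Sandwiching the resulting bound on $\|\wh\vM - \vM\|$ between two copies of $\hat\vD^{-1}$ of norm at most $\sqrt{2/\pimin}$ contributes $O(\sqrt{\veps}+\veps)$ to the first summand.

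Assembling the three contributions yields $\|\wh\vL - \vL\| \le C(\sqrt{\veps}+\veps+\veps^2)$, with the $\veps^2$ term arising from the product of two $(\hat\vD^{-1}-\vD^{-1})$ error factors after expanding cross terms. A union bound over the two high-probability events completes the argument at confidence $1-\delta$. The main obstacle is the matrix Bernstein step: applying the scalar Bernstein inequality for Markov chains \parencite{Paulin15} to each entry of $\wh\vM-\vM$ and union-bounding would cost a factor of $d^2$ in the sample complexity and break the matching between the upper and lower bounds of \cref{sec:rates-lower}; matrix concentration recovers the correct $\log d$ dependence, but only after the blocking coupling neutralizes the temporal dependence, which is precisely what introduces the extra $\log(n/(\pimin\delta))$ factor in the definition of $\veps$.
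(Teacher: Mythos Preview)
Your overall architecture (matrix Bernstein plus Bernstein--Yu blocking for the doublet part, \cref{lem:hatpimin} for the singlet part) is the paper's strategy, but the ``sandwiching'' step is where the argument breaks. You propose to bound $\|\wh\vM-\vM\|$ by matrix Bernstein and then multiply by $\|\hat\vD^{-1}\|^2\le 2/\pimin$. With $\vZ_t=\ve_{X_t}\ve_{X_{t+1}}^\t$ the matrix variance is $\|\bbE^{\vpi}[\vZ_t\vZ_t^\t]\|=\|\Diag(\vpi)\|=\max_i\pi_i$, so matrix Bernstein gives $\|\wh\vM-\vM\|=O\bigl(\sqrt{(\max_i\pi_i)\log(d/\delta)/\mu}\bigr)$ with $\mu\asymp n\gap/\log(n/(\pimin\delta))$. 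After multiplying by $2/\pimin$ you obtain $O\bigl(\sqrt{\max_i\pi_i/\pimin}\cdot\sqrt{\veps}\bigr)$, not $O(\sqrt{\veps})$; when $\vpi$ is far from uniform this is off by a factor $1/\sqrt{\pimin}$. The same loss occurs in the second and third summands: crudely bounding $\|(\hat\vD^{-1}-\vD^{-1})\vM\hat\vD^{-1}\|\le\|\hat\vD^{-1}-\vD^{-1}\|\cdot\|\vM\|\cdot\|\hat\vD^{-1}\|$ yields roughly $\pimin^{-3/2}\sqrt{\veps_n}$, again a factor $1/\pimin$ too large. The reason submultiplicativity is lossy here is that the large diagonal entries of $\hat\vD^{-1}$ (at rarely visited states) line up with the small rows/columns of $\vM$ and of $\wh\vM-\vM$ (at those same states), and the product of norms cannot see that cancellation.

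The fix, which the paper carries out, is to push the rescaling \emph{inside} the matrix Bernstein step: apply the inequality to the deterministic-weight summands $\vD^{-1}\vZ_t\vD^{-1}$, i.e., work with $\errm:=\vD^{-1}(\wh\vM-\vM)\vD^{-1}$ using the true $\vpi$. The variance proxy then becomes $d_{\vP}:=\max_i\sum_j P_{i,j}/\pi_j\le 1/\pimin$, which delivers $\|\errm\|=O(\sqrt{\veps}+\veps)$ directly. Correspondingly, the decomposition should factor through $\vL=\vD^{-1}\vM\vD^{-1}$ (with $\|\vL\|\le1$) rather than $\vM$: writing $\errp:=\hat\vD^{-1}\vD-\vI$ one has $\wh\vL=(\vI+\errp)(\vL+\errm)(\vI+\errp)$, so every term is either $\vL$, $\errp$ (dimensionless, $O(\sqrt{\veps_n/\pimin})$), or $\errm$. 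This also clarifies the $\veps^2$: it comes from the triple products $\errp\errm\errp$ and $\errp\vL\errp$, not from your three-term expansion, which has no quadratic cross term.
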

We briefly describe how to obtain the bound on $|\hatgap - \gap|$ that appears
in \cref{eq:gapbound}, which is of the form $C' \sqrt{\veps}$.
Observe that if $\veps > 1/C'$, then, owing to $C'\ge 1$, the bound on $|\hatgap - \gap|$ is trivial.
So we may assume that $\veps \leq 1/C'$, which implies
$n/\log n \geq C' (\log(d/\delta))/(\pimin\gap)$ (and thus $n\ge 2$), and also
$n \geq C' (\log(d/\delta))(\log(1/(\pimin\delta)))/(\pimin\gap)$.
These inequalities imply that $n$ satisfies the condition in \cref{eq:min-n},
so by \cref{lem:gap}, we have $|\hatgap - \gap| \leq \norm{\wh\vL - \vL}
\leq C (\sqrt{\veps} + \veps + \veps^2) \leq C' \sqrt{\veps}$.

The remainder of this section is devoted to proving this
\lcnamecref{lem:gap}.

When $\hat \vpi$ is positive valued, the error $\wh\vL - \vL$ may be written as 
\[
  \wh\vL - \vL
  = \errm + \errp \vL + \vL \errp + \errp \vL \errp
  + \errp \errm + \errm \errp + \errp \errm \errp\,,
\]
where
\begin{align*}
  \errp & := \Diag(\hat\vpi)^{-1/2} \Diag(\vpi)^{1/2} - \vI  \quad \text{and}\\
  \errm & := \Diag(\vpi)^{-1/2} \Parens{
    \wh\vM - \vM
  } \Diag(\vpi)^{-1/2}\,.
\end{align*}
Therefore
\begin{align*}
  \norm{\wh\vL - \vL}
  \leq \norm{\errm} +
  \Parens{
    \norm{\errm} + \norm{\vL}
  }
  \Parens{
    2\norm{\errp} + \norm{\errp}^2
  }
  .
\end{align*}
If $\norm{\errp} \leq 1$ also holds, then, thanks to $\norm{\vL}\le 1$,
\begin{align}
  \norm{\wh\vL - \vL}
  \leq
  \norm{\errm} + \norm{\errm}^2 + 3\norm{\errp}
  .
  \label{eq:vlsimple}
\end{align}

\subsection{A bound on $\norm{\errp}$}

Since $\errp$ is diagonal,
\begin{align*}
  \norm{\errp}
  = \max_{i\in[d]}
  \Abs{
    \sqrt{\frac{\pi_i}{\hat\pi_i}} - 1
  }
  .
\end{align*}
Assume that
\begin{equation}
  n \geq \frac{108 \ln\Parens{\frac{d}\delta\sqrt{\frac{2}{\pimin}}}
  }{\pimin\gap}
  ,
  \label{eq:min-n1}
\end{equation}
in which case
\[
  \sqrt{8\pi_i(1-\pi_i) \veps_n} + 20\veps_n
  \leq \frac{\pi_i}{2}\,,
\]
where $\veps_n$ is as defined in \cref{eq:singleton-veps}.
Therefore, on the $1-\delta$ probability event from
\cref{lem:hatpimin}, we have $|\pi_i - \hat\pi_i| \leq \pi_i/2$ for
each $i \in [d]$, and moreover, $2/3 \leq \pi_i/\hat\pi_i \leq 2$ for
each $i \in [d]$. In particular, it also holds that $\hat \vpi$ is positive valued.
Further, for this range of $\pi_i/\hat\pi_i$, we have
\[
  \Abs{ \sqrt{\frac{\pi_i}{\hat\pi_i}} - 1 }
  \leq 
  \Abs{ \frac{\hat\pi_i}{\pi_i} - 1 }
  .
\]
We conclude that if $n$ satisfies \cref{eq:min-n1}, then on this
$1-\delta$ probability event from \cref{lem:hatpimin},
$\hat \vpi$ is positive valued and
\begin{multline}
  \norm{\errp}
  \le
  \max_{i \in [d]}
  \Abs{
    \frac{\hat\pi_i}{\pi_i} - 1
  }
  \leq
  \max_{i \in [d]}
  \frac{
    \sqrt{8\pi_i(1-\pi_i) \veps_n} + 20 \veps_n
  }{\pi_i}
  \\
  \leq
  \sqrt{\frac{8\veps_n}{\pimin}} + \frac{20\veps_n}{\pimin}
  =
  \sqrt{
    \frac{
      8\ln\Parens{\frac{d}\delta\sqrt{\frac{2}{\pimin}}}
    }{
      \pimin\gap n
    }
  }
  +
  \frac{
    20\ln\Parens{\frac{d}\delta\sqrt{\frac{2}{\pimin}}}
  }{
    \pimin\gap n
  }
  \leq \min\{ C' ( \sqrt{\veps} + \veps ) , 1 \}
  \label{eq:singletonbound}
\end{multline}
for some suitable constant $C'>0$, where $\veps$ as defined in \cref{lem:gap}.
\todoc{This uses $\delta<1$.}

\subsection{Accuracy of doublet frequency estimates (bounding $\norm{\errm}$)}
\label{sec:pairwise}
In this section we prove a bound on $\norm{\errm}$.
For this, we decompose
$\errm = \Diag(\vpi)^{-1/2}(\wh\vM-\vM)\Diag(\vpi)^{-1/2} $
into $\EE{\errm}$ and 
$\errm-\EE{\errm}$, the first measuring the effect of a non-stationary start of the chain,
while the second measuring the variation due to randomness.
\if0
into the sum of
\[
  \Diag(\vpi)^{-1/2}
  \left(\bbE( \wh\vM )- \vM\right)
  \Diag(\vpi)^{-1/2}
\]
and
\begin{multline*}
  \Diag(\vpi)^{-1/2} \left(\wh\vM- \bbE(\wh\vM)\right) \Diag(\vpi)^{-1/2}
  \\
  =
  \frac{n_H}{n-1}
  \Diag(\vpi)^{-1/2}\Parens{
    \wh{\vM}_H - \bbE(\wh{\vM}_H)
  }\Diag(\vpi)^{-1/2}
  \\
  + \frac{n_T}{n-1}
  \Diag(\vpi)^{-1/2}\Parens{
    \wh{\vM}_T - \bbE(\wh{\vM}_T)
  }\Diag(\vpi)^{-1/2}
  .
\end{multline*}
\fi

\subsubsection{Bounding $\norm{\EE{\errm}}$: The price of a non-stationary start.}
Let $\vpi^{(t)}$ be the distribution of states at time step $t$.
We will make use of the following proposition, which can be derived by
following \textcite[Proposition 1.12]{MoTe06}:
\begin{proposition}
For $t\ge 1$, let $\vUpsilon^{(t)}$ be the vector with $\Upsilon^{(t)}_i= \frac{\pi_i^{(t)}}{\pi_i}$ and let
$\norm{\cdot}_{2,\vpi}$ denote the $\vpi$-weighted $2$-norm
\begin{align}
\label{eq:vv}
  \norm{\vv}_{2,\vpi} := \Parens{ \sum_{i=1}^d \pi_i v_i^2 }^{1/2}
  .
\end{align}
 Then,
\begin{equation}
  \label{eq:L2-contraction}
  \norm{ \vUpsilon^{(t)} - \v1 }_{2,\vpi} \leq
  \frac{(1-\gap)^{t-1}}{\sqrt{\pimin}} \,.
\end{equation}
\end{proposition}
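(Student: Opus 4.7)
The plan is to interpret the squared weighted norm $\norm{\vUpsilon^{(t)} - \v 1}_{2,\vpi}^2$ as the chi-squared divergence $\chi^2(\vpi^{(t)}\,\|\,\vpi)$, exhibit it as the image of the initial chi-squared divergence under $t-1$ applications of the transition operator, and conclude via the $L^2(\vpi)$ self-adjointness of $\vP$ that this divergence contracts by a factor of $\slem = 1-\gap$ at each step. The final ingredient is an elementary bound on the initial divergence in terms of $\pimin$.

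First, I would establish the recursion $\vUpsilon^{(t+1)} = \vP\,\vUpsilon^{(t)}$ where $\vP$ now acts by left multiplication on the column vector $\vUpsilon^{(t)}$. Starting from $\pi_j^{(t+1)} = \sum_i \pi_i^{(t)} P_{i,j}$, dividing by $\pi_j$ and applying reversibility $\pi_i P_{i,j} = \pi_j P_{j,i}$ yields $\Upsilon_j^{(t+1)} = \sum_i P_{j,i} \Upsilon_i^{(t)}$. Since $\vP\v 1 = \v 1$, iteration gives $\vUpsilon^{(t)} - \v 1 = \vP^{t-1}(\vUpsilon^{(1)} - \v 1)$.

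Next, I would verify that $\vUpsilon^{(t)} - \v 1 \in \v 1^{\perp}$ inside $L^2(\vpi)$: indeed $\langle \vUpsilon^{(t)} - \v 1,\,\v 1\rangle_{\vpi} = \sum_i \pi_i(\Upsilon_i^{(t)} - 1) = \sum_i(\pi_i^{(t)} - \pi_i) = 0$. Reversibility makes $\vP$ self-adjoint on $L^2(\vpi)$, since $\vP$ is similar via $\Diag(\vpi)^{1/2}$ to the symmetric matrix $\vL$, and the eigenvalues of $\vP$ on $L^2(\vpi)$ are $1,\lambda_2,\ldots,\lambda_d$ with $\v 1$ spanning the eigenspace for $\lambda_1 = 1$. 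Consequently, its operator norm on the invariant subspace $\v 1^{\perp}$ equals $\slem = 1-\gap$, and so $(t-1)$ applications give
\[
  \norm{\vUpsilon^{(t)} - \v 1}_{2,\vpi}
  \le (1-\gap)^{t-1}\,\norm{\vUpsilon^{(1)} - \v 1}_{2,\vpi}.
\]

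Finally, I would bound the initial divergence by $1/\pimin$. Expanding,
\[
  \norm{\vUpsilon^{(1)} - \v 1}_{2,\vpi}^2
  = \sum_i \frac{(\pi_i^{(1)})^2}{\pi_i} - 1
  \le \frac{1}{\pimin}\sum_i \pi_i^{(1)} - 1
  = \frac{1}{\pimin} - 1
  \le \frac{1}{\pimin},
\]
where the first inequality uses $\pi_i^{(1)} \le 1$ to conclude $(\pi_i^{(1)})^2/\pi_i \le \pi_i^{(1)}/\pimin$. Combining with the previous display gives the claim. There is no real obstacle here: the only point requiring care is recognizing that reversibility converts the left action of $\vP$ on row-vector distributions into the correct left action on the column-vector density $\vUpsilon^{(t)}$, which in turn unlocks the $L^2(\vpi)$ spectral bound.
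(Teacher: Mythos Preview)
Your proof is correct and is precisely the standard $L^2(\vpi)$ spectral argument: rewrite the density evolution as $\vUpsilon^{(t+1)}=\vP\vUpsilon^{(t)}$ via reversibility, use self-adjointness of $\vP$ on $L^2(\vpi)$ to contract on $\v1^\perp$ at rate $\slem$, and bound the initial chi-squared divergence by $1/\pimin$. The paper does not give its own proof of this proposition but instead cites \textcite[Proposition~1.12]{MoTe06}; your argument is exactly the derivation one would extract from that reference, so there is nothing to contrast.
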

An immediate corollary of this result is that 
\begin{align}
\label{eq:dvpi-ratio-bound}
\Norm{  \Dvpit \Dvpi^{-1} - \vI  } \le \frac{(1-\gap)^{t-1}}{\pimin}\,.
\end{align}
Now note that
\begin{align*}
\bbE (\wh\vM) = \frac{1}{n-1} \sum_{t=1}^{n-1} \Dvpit \vP\,
\end{align*}
and thus
\begin{align*}
\EE{\errm} &=
  \Dvpi^{-1/2}
  \left(\bbE( \wh\vM )- \vM\right)
  \Dvpi^{-1/2}\\
& =  
  \frac{1}{n-1} \sum_{t=1}^{n-1}
  \Dvpi^{-1/2} (\Dvpit  - \Dvpi) \vP \Dvpi^{-1/2} \\
& =  
  \frac{1}{n-1} \sum_{t=1}^{n-1}
  \Dvpi^{-1/2} (\Dvpit \Dvpi^{-1} - \vI) \vM \Dvpi^{-1/2} \\
& =  
  \frac{1}{n-1} \sum_{t=1}^{n-1}
   (\Dvpit \Dvpi^{-1} - \vI) \vL \,.
\end{align*}
Combining this, $\norm{\vL}\le 1$ and~\cref{eq:dvpi-ratio-bound}, we get
\begin{align}
  \norm{\bbE(\errm)}
  \le \frac{1}{(n-1)\pimin} \sum_{t=1}^{n-1} (1-\gap)^{t-1}
  \le \frac{1}{(n-1)\gap\pimin}
  .
  \label{eq:biasbound}
\end{align}

\subsubsection{
Bounding $\norm{\errm-\EE{\errm}}$: 
Application of a matrix tail inequality}
\label{sec:pairwise-tailbound}

In this section we analyze the deviations of $\errm-\EE{\errm}$. By the definition of $\errm$,
\begin{align}
\label{eq:errmdev1}
\norm{\errm-\EE{\errm}}
 =  \norm{\Diag(\vpi)^{-1/2}\Parens{\wh\vM - \bbE \wh\vM } \Diag(\vpi)^{-1/2}}
 \,.
\end{align}
The matrix $\wh\vM - \EE{\wh\vM}$  is defined 
as a sum of dependent centered random matrices.
We will use the blocking technique of \textcite{Bernstein27} 
to relate the likely deviations of this matrix to that of
a sum of independent centered random matrices.
The deviations of these will then bounded
with the help of a Bernstein-type matrix tail inequality due to \textcite{tropp2015intro}.

We divide $[n-1]$ into contiguous blocks of time steps; each has size
$a \leq n/3$ except possibly the first block, which has size between
$a$ and $2a-1$.
Formally, let $a' := a + ((n-1) \bmod a) \leq 2a-1$, and define
\begin{align*}
  F & := [a'] , \\
  H_s & := \{ t \in [n-1] : a' + 2(s-1)a + 1 \leq t \leq a' + (2s-1)a \} , \\
  T_s & := \{ t \in [n-1] : a' + (2s-1)a + 1 \leq t \leq a' + 2sa \} ,
\end{align*}
for $s=1,2,\dotsc$.
Let $\mu_H$ (resp., $\mu_T$) be the number of non-empty $H_s$ (resp., $T_s$)
blocks.
Let $n_H := a\mu_H$ (resp., $n_T := a\mu_T$) be the number of time
steps in $\cup_s H_s$ (resp., $\cup_s T_s$).
We have
\begin{align}
  \wh\vM
  & = \frac1{n-1} \sum_{t=1}^{n-1} \ve_{X_t} \ve_{X_{t+1}}^\t
  \notag \\
  & = \frac{a'}{n-1} \cdot
    \underbrace{
      \frac{1}{a'} \sum_{t\in F} \ve_{X_t} \ve_{X_{t+1}}^\t
     }_{\wh\vM_F} +
  \frac{n_H}{n-1} \cdot
  \underbrace{
    \frac1{\mu_H}
    \sum_{s=1}^{\mu_H}
    \Parens{
      \frac1a \sum_{t \in H_s} \ve_{X_t} \ve_{X_{t+1}}^\t
    }
  }_{\wh{\vM}_H}
  \notag \\
  & \qquad
  + \frac{n_T}{n-1} \cdot
  \underbrace{
    \frac1{\mu_T}
    \sum_{s=1}^{\mu_T}
    \Parens{
      \frac1a \sum_{t \in T_s} \ve_{X_t} \ve_{X_{t+1}}^\t
    }
  }_{\wh{\vM}_T}
  .
  \label{eq:blocking}
\end{align}
Here, $\ve_i$ is the $i$-th coordinate basis vector, so $\ve_i
\ve_j^\t \in \{0,1\}^{d \times d}$ is a $d \times d$ matrix of all
zeros except for a $1$ in the $(i,j)$-th position.

The contribution of the first block is easily bounded using the
triangle inequality:
\begin{multline}
  \frac{a'}{n-1}
  \Norm{
    \Diag(\vpi)^{-1/2}
    \Parens{
      \wh\vM_F - \bbE(\wh\vM_F)
    }
    \Diag(\vpi)^{-1/2}
  }
  \\
  \leq
  \frac1{n-1} \sum_{t \in F}
  \Braces{
    \Norm{
      \frac{
        \ve_{X_t} \ve_{X_{t+1}}^\t
      }{\sqrt{\pi_{X_t}\pi_{X_{t+1}}}}
    }
    +
    \Norm{
      \bbE\Parens{
        \frac{
          \ve_{X_t} \ve_{X_{t+1}}^\t
        }{\sqrt{\pi_{X_t}\pi_{X_{t+1}}}}
      }
    }
  }
  \leq
  \frac{2a'}{\pimin(n-1)}
  .
  \label{eq:first-block}
\end{multline}

It remains to bound the contributions of the $H_s$ blocks and the
$T_s$ blocks.
We just focus on the the $H_s$ blocks, since the analysis is identical
for the $T_s$ blocks.

Let
\[
  \vY_s := \frac1a \sum_{t \in H_s} \ve_{X_t} \ve_{X_{t+1}}^\t ,
  \quad s \in [\mu_H] ,
\]
so
\[
  \wh{\vM}_H = \frac1{\mu_H} \sum_{s=1}^{\mu_H} \vY_s ,
\]
an average of the random matrices $\vY_s$.
For each $s \in [\mu_H]$, the random matrix $\vY_s$ is a function of
\[ (X_t : a' + 2(s-1)a + 1 \leq t \leq a' + (2s-1)a + 1) \]
(note the $+1$ in the upper limit of $t$),
so $\vY_{s+1}$ is $a$ time steps ahead of $\vY_s$.
When $a$ is sufficiently large, we will be able to effectively treat
the random matrices $\vY_s$ as if they were independent.
In the sequel, we shall always assume that the block length $a$
satisfies
\begin{equation}
  \label{eq:block-length}
  a \geq
  a_\delta
  :=
  \frac{1}{\gap} \ln \frac{2(n-2)}{\delta\pimin}
\end{equation}
for $\delta \in (0,1)$.

Define
\begin{align*}
  \vpi^{(H_s)}  := \frac1a \sum_{t \in H_s} \vpi^{(t)} , \qquad \qquad
  \vpi^{(H)}  := \frac1{\mu_H} \sum_{s=1}^{\mu_H} \vpi^{(H_s)} .
\end{align*}
Observe that
\[
  \bbE(\vY_s)
  = \Diag(\vpi^{(H_s)}) \vP
\]
so
\[
  \bbE\Parens{
    \frac1{\mu_H} \sum_{s=1}^{\mu_H} \vY_s
  } = \Diag(\vpi^{(H)}) \vP
  .
\]

Define
\[
  \vZ_s
  := \Diag(\vpi)^{-1/2} \left( \vY_s 
  - \bbE(\vY_s) \right)\Diag(\vpi)^{-1/2}
  .
\]
We apply a matrix tail inequality to the average of \emph{independent}
copies of the $\vZ_s$'s.
More precisely, we will apply the tail inequality to independent
copies $\wt{\vZ}_s$, $s\in \iset{\mu_H}$ of the random variables
$\vZ_s$ and then relate the average of $\wt{\vZ}_s$ to that of
$\vZ_s$.
The following probability inequality is from \textcite[Theorem
6.1.1.]{tropp2015intro}.
\begin{theorem}[Matrix Bernstein inequality]
\label{thm:mxbernstein}
Let $\vQ_1,\vQ_2,\dotsc,\vQ_m$ be a sequence of independent, random
$d_1 \times d_2$ matrices.
Assume that $\EE{\vQ_i} = \v0$ and $\Norm{\vQ_i}\le R$ for each $1\le
i \le m$.
Let $\vS = \sum_{i=1}^m \vQ_i$ and let 
\begin{align*}
v = \max\left\{ \norm{\bbE \textstyle\sum_i \vQ_i \vQ_i^\top  }, 
						      \norm{\bbE \textstyle\sum_i \vQ_i^\top \vQ_i  }
			\right\}\,.
\end{align*}
Then, for all $t\ge 0$, 
\[
\bbP\left( \Norm{\vS} \ge t \right) \le 2(d_1 + d_2) \exp\left( -\frac{t^2/2}{v+R t/3}\right)\,.
\]
In other words, for any $\delta \in (0,1)$,
\[
  \bbP\Parens{
    \norm{\vS} > \sqrt{2 v \ln \frac{2(d_1+d_2)}{\delta}} + \frac{2R}{3}
    \ln\frac{2(d_1+d_2)}{\delta}
  } \leq \delta
  \,.
\]
\end{theorem}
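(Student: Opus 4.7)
The plan is to follow the matrix Laplace transform method combined with Lieb's concavity theorem, as in \textcite{tropp2015intro}. First, I would reduce the possibly rectangular case to the Hermitian case via the Hermitian dilation, which maps each $d_1 \times d_2$ matrix $\vQ$ to the self-adjoint $(d_1+d_2) \times (d_1+d_2)$ block matrix $\cD(\vQ)$ with off-diagonal blocks $\vQ$ and $\vQ^\t$ and zero diagonal blocks. This dilation is linear and satisfies $\norm{\vQ} = \lambda_{\max}(\cD(\vQ))$, so $\cD(\vS) = \sum_i \cD(\vQ_i)$ is a sum of independent, centered Hermitian matrices of norm at most $R$. Since $\cD(\vQ)^2$ is block diagonal with diagonal blocks $\vQ\vQ^\t$ and $\vQ^\t\vQ$, one checks that $\Norm{\sum_i \bbE[\cD(\vQ_i)^2]} = v$; it therefore suffices to prove the Hermitian analogue and apply the dilation.

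For the Hermitian case, I would invoke the matrix Laplace bound $\bbP(\lambda_{\max}(\vY) \geq t) \leq e^{-\theta t}\,\bbE\,\tr\exp(\theta \vY)$, which follows from Markov's inequality applied to $\tr\exp$, and apply it to both $\pm\cD(\vS)$. The heart of the argument is to bound $\bbE\,\tr\exp(\theta\cD(\vS))$, for which I would use Lieb's concavity theorem in the form $\bbE\,\tr\exp\bigl(\sum_i \vX_i\bigr) \leq \tr\exp\bigl(\sum_i \log\bbE\exp(\vX_i)\bigr)$ for independent Hermitian summands. For each matrix cumulant I would use the scalar inequality $e^{\theta x} \leq 1 + \theta x + g(\theta) x^2$ valid for $|x| \leq R$, with $g(\theta) := (e^{\theta R} - \theta R - 1)/R^2$, lifted to Hermitian matrices via the transfer rule. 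Because the summands are centered, this yields $\bbE\exp(\theta\cD(\vQ_i)) \preceq \vI + g(\theta)\,\bbE[\cD(\vQ_i)^2]$ and, by operator monotonicity of the logarithm, $\log\bbE\exp(\theta\cD(\vQ_i)) \preceq g(\theta)\,\bbE[\cD(\vQ_i)^2]$.

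Combining the three ingredients and bounding the trace by the ambient dimension times the top eigenvalue gives $\bbE\,\tr\exp(\theta\cD(\vS)) \leq (d_1+d_2)\exp(g(\theta) v)$, and optimizing $\theta \mapsto -\theta t + g(\theta) v$ produces the stated Bernstein tail bound $2(d_1+d_2)\exp\bigl(-t^2/(2(v + Rt/3))\bigr)$; inverting in $t$ for a target confidence $\delta$ delivers the high-probability form. The only deep ingredient is Lieb's concavity theorem, and that is the main obstacle: it is what replaces, in the matrix setting, the scalar identity $\bbE\prod_i e^{\theta X_i} = \prod_i \bbE e^{\theta X_i}$ on which classical Bernstein proofs rely. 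The dilation, the scalar Bernstein estimate, and the optimization in $\theta$ are all routine. Since the theorem is quoted from \textcite{tropp2015intro} rather than reproved in the present paper, the outline above merely retraces the standard textbook route.
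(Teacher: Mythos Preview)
Your outline is correct and faithfully traces the proof in \textcite{tropp2015intro}; as you yourself note at the end, the present paper does not prove this theorem at all but simply quotes it as Theorem~6.1.1 of Tropp. There is no ``paper's own proof'' to compare against, so your sketch of the Hermitian dilation, matrix Laplace transform, Lieb's concavity, and the Bernstein mgf bound is exactly the textbook route and nothing more needs to be said.
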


To apply \cref{thm:mxbernstein}, it suffices to bound the spectral
norms of $\vZ_s$ (almost surely), $\bbE(\vZ_s \vZ_s^\t)$, and
$\bbE(\vZ_s^\t \vZ_s)$.

\paragraph{Range bound.}
By the triangle inequality,
\[
  \norm{\vZ_s}
  \leq \norm{\Diag(\vpi)^{-1/2} \vY_s \Diag(\vpi)^{-1/2}}
  + \norm{\Diag(\vpi)^{-1/2} \bbE(\vY_s) \Diag(\vpi)^{-1/2}}
  \,.
\]
For the first term, we have
\begin{align}
\label{eq:vysbound}
  \norm{\Diag(\vpi)^{-1/2} \vY_s \Diag(\vpi)^{-1/2}} \leq \frac1{\pimin} .
\end{align}
For the second term, we use the fact $\norm{\vL}\le 1$ to bound
\begin{align*}
  \norm{\Diag(\vpi)^{-1/2} (\bbE(\vY_s) -\vM)\Diag(\vpi)^{-1/2}}
  & = \norm{ \bigl(\Diag(\vpi^{(H_s)}) \Diag(\vpi)^{-1} - \vI  \bigr) \vL }
  \\
  & \le \norm{ \Diag(\vpi^{(H_s)}) \Diag(\vpi)^{-1} - \vI }\,.
\end{align*}
Then, using~\cref{eq:dvpi-ratio-bound},
\begin{equation}
  \norm{ \Diag(\vpi^{(H_s)}) \Diag(\vpi)^{-1} - \vI }
  \le \frac{(1-\gap)^{a'+2(s-1)a}}{\pimin}
  \le \frac{(1-\gap)^{a}}{\pimin} \le 1\,,
  \label{eq:ysbias}
\end{equation}
where the last inequality follows from the assumption that the
block length $a$ satisfies~\cref{eq:block-length}.
Combining this with
$\norm{\Diag(\vpi)^{-1/2} \vM\Diag(\vpi)^{-1/2}} = \norm{\vL}\le 1$,
it follows that
\begin{align}
\label{eq:evysbound}
\norm{\Diag(\vpi)^{-1/2} \bbE(\vY_s) \Diag(\vpi)^{-1/2}} \le 2
\end{align}
by the triangle inequality.
Therefore, together with~\cref{eq:vysbound}, we obtain the range bound
\[
  \norm{\vZ_s} \leq \frac1{\pimin} + 2
  .
\]

\paragraph{Variance bound.}
We now determine bounds on the spectral norms of $\bbE(\vZ_s
\vZ_s^\t)$ and $\bbE(\vZ_s^\t \vZ_s)$.
Observe that
\begin{align}
  \lefteqn{
    \bbE(\vZ_s\vZ_s^\t)
  } \notag \\
  & =
  \frac1{a^2} \sum_{t \in H_s}
  \bbE\Parens{
    \Diag(\vpi)^{-1/2}
    \ve_{X_t} \ve_{X_{t+1}}^\t 
    \Diag(\vpi)^{-1}
    \ve_{X_{t+1}} \ve_{X_t}^\t
    \Diag(\vpi)^{-1/2}
  }
  \label{eq:var1}
  \\
  & \quad
  + \frac1{a^2} \sum_{ \substack{ t \neq t' \\ t,t'\in H_s} } 
  \bbE\Parens{
    \Diag(\vpi)^{-1/2}
    \ve_{X_t} \ve_{X_{t+1}}^\t 
    \Diag(\vpi)^{-1}
    \ve_{X_{t'+1}} \ve_{X_{t'}}^\t
    \Diag(\vpi)^{-1/2}
  }
  \label{eq:var2}
  \\
  & \quad
  - \Diag(\vpi)^{-1/2}
  \bbE(\vY_s)
  \Diag(\vpi)^{-1} 
  \bbE(\vY_s^\t)
  \Diag(\vpi)^{-1/2}
  .
  \label{eq:var3}
\end{align}
The first sum, \cref{eq:var1}, easily simplifies to the diagonal matrix
\begin{align*}
  \lefteqn{
    \frac1{a^2}
    \sum_{t \in H_s}
    \sum_{i=1}^d \sum_{j=1}^d \Pr(X_t = i, X_{t+1} = j) \cdot
    \frac{1}{\pi_i\pi_j} \ve_i\ve_j^\t\ve_j\ve_i^\t
  } \\
  & =
  \frac1{a^2}
  \sum_{t \in H_s}
  \sum_{i=1}^d \sum_{j=1}^d \pi_i^{(t)} P_{i,j} \cdot
  \frac{1}{\pi_i\pi_j} \ve_i\ve_i^\t
  =
  \frac1{a}
  \sum_{i=1}^d \frac{\pi_i^{(H_s)}}{\pi_i}
  \Parens{ \sum_{j=1}^d \frac{P_{i,j}}{\pi_j} }
   \ve_i \ve_i^\t
  .
\end{align*}
For the second sum, \cref{eq:var2},  a symmetric matrix, consider
\[
  \vu^\t
  \Parens{
    \frac1{a^2} \sum_{ \substack{ t \neq t' \\ t,t'\in H_s} } 
    \bbE\Parens{
      \Diag(\vpi)^{-1/2}
      \ve_{X_t} \ve_{X_{t+1}}^\t 
      \Diag(\vpi)^{-1}
      \ve_{X_{t'+1}} \ve_{X_{t'}}^\t
      \Diag(\vpi)^{-1/2}
    }
  } \vu
\]
for an arbitrary unit vector $\vu$.
By Cauchy-Schwarz and AM/GM, this is bounded from above by
\begin{multline*}
  \frac1{2a^2}
  \sum_{ \substack{ t \neq t' \\ t,t'\in H_s} }
  \biggl[
  \bbE\Parens{
    \vu^\t
    \Diag(\vpi)^{-1/2} \ve_{X_t} \ve_{X_{t+1}}^\t \Diag(\vpi)^{-1}
    \ve_{X_{t+1}} \ve_{X_t}^\t \Diag(\vpi)^{-1/2}
    \vu
  }
  \\
  +
  \bbE\Parens{
    \vu^\t
    \Diag(\vpi)^{-1/2} \ve_{X_{t'}} \ve_{X_{t'+1}}^\t \Diag(\vpi)^{-1}
    \ve_{X_{t'+1}} \ve_{X_{t'}}^\t \Diag(\vpi)^{-1/2}
    \vu
  }
  \biggr]
  ,
\end{multline*}
which simplifies to
\[
  \frac{a-1}{a^2}
  \vu^\t \bbE\Parens{
    \sum_{t \in H_s}
    \Diag(\vpi)^{-1/2} \ve_{X_t} \ve_{X_{t+1}}^\t \Diag(\vpi)^{-1}
    \ve_{X_{t+1}} \ve_{X_t}^\t \Diag(\vpi)^{-1/2}
  } \vu\,.
\]
The expectation is the same as that for the first term,
\cref{eq:var1}.

Finally, the spectral norm of the third term, \cref{eq:var3}, is
bounded using \cref{eq:evysbound}:
\[
  \norm{\Diag(\vpi)^{-1/2} \bbE(\vY_s) \Diag(\vpi)^{-1/2}}^2
  \leq 4
  .
\]

Therefore, by the triangle inequality, the bound $\pi_i^{(H)}/\pi_i
\leq 2$ from \cref{eq:ysbias}, and simplifications, 
\begin{align*}
  \Norm{
    \bbE(\vZ_s \vZ_s^\t)
  }
  &\leq
  \max_{i\in[d]}
  \Parens{ \sum_{j=1}^d \frac{P_{i,j}}{\pi_j} }
  \frac{\pi_i^{(H)}}{\pi_i}
  + 4
  \leq
  2\max_{i\in[d]}
  \Parens{ \sum_{j=1}^d \frac{P_{i,j}}{\pi_j} }
  + 4
  .
\end{align*}

We can bound $\bbE(\vZ_s^\t \vZ_s)$
in a similar way; the only difference is that the reversibility needs
to be used at one place to simplify an expectation:
\begin{align*}
\MoveEqLeft
    \frac1{a^2} \sum_{t \in H_s}
    \bbE\Parens{
      \Diag(\vpi)^{-1/2}
      \ve_{X_{t+1}} \ve_{X_t}^\t
      \Diag(\vpi)^{-1}
      \ve_{X_t}\ve_{X_{t+1}}^\t
      \Diag(\vpi)^{-1/2}
    }
	\\
  & =
  \frac1{a^2} \sum_{t \in H_s}
  \sum_{i=1}^d \sum_{j=1}^d \Pr(X_t = i, X_{t+1} = j) \cdot
  \frac{1}{\pi_i\pi_j} \ve_j\ve_j^\t
  \\
  & =
  \frac1{a^2} \sum_{t \in H_s}
  \sum_{i=1}^d \sum_{j=1}^d \pi_i^{(t)} P_{i,j} \cdot
  \frac{1}{\pi_i\pi_j} \ve_j\ve_j^\t
  \\
  & =
  \frac1{a^2} \sum_{t \in H_s}
  \sum_{j=1}^d \Parens{
    \sum_{i=1}^d \frac{\pi_i^{(t)}}{\pi_i} \cdot \frac{P_{j,i}}{\pi_i}
  } \ve_j\ve_j^\t
\end{align*}
where the last step uses \cref{eq:reversibility}.
As before, we get
\begin{align*}
  \Norm{
    \bbE(\vZ_s^\t\vZ_s)
  }
  &\leq
  \max_{i\in[d]}
  \Parens{
    \sum_{j=1}^d \frac{P_{i,j}}{\pi_j}
    \cdot \frac{\pi_j^{(H)}}{\pi_j}
  }
  + 4
  \leq
  2 \max_{i\in[d]}
  \Parens{
    \sum_{j=1}^d \frac{P_{i,j}}{\pi_j}
  }
  + 4
\end{align*}
again using the bound $\pi_i^{(H)}/\pi_i \leq 2$ from
\cref{eq:ysbias}.

\paragraph{Independent copies bound.}
Let $\wt\vZ_s$ for $s \in [\mu_H]$ be independent copies of
$\vZ_s$ for $s \in [\mu_H]$.
Applying \cref{thm:mxbernstein} to the average of these random
matrices, we have
\begin{equation}
  \bbP\Parens{
    \Norm{ \frac1{\mu_H} \sum_{s=1}^{\mu_H} \wt\vZ_s}
    >
    \sqrt{
      \frac{4\Parens{ d_{\vP} + 2 } \ln\frac{4d}{\delta}}
      {\mu_H}
    }
    + \frac{2\Parens{ \frac1{\pimin} + 2 } \ln\frac{4d}{\delta}}
    {3\mu_H}
  } \leq \delta
  \label{eq:indpt-bound}
\end{equation}
where
\begin{align*}
  d_{\vP}
  & := \max_{i \in [d]} \sum_{j=1}^d \frac{P_{i,j}}{\pi_j}
  \leq
  \frac{1}{\pimin}
  \,.
\end{align*}

\paragraph{The actual bound.}
To bound the probability that $\norm{\sum_{s=1}^{\mu_H} \vZ_s/\mu_H}$
is large, we appeal to the following result
\parencite[a consequence of][Corollary 2.7]{Yu94}.
For each $s \in [\mu_H]$, let $X^{(H_s)} := (X_t : a' + 2(s-1)a + 1
\leq t \leq a' + (2s-1)a + 1)$, which are the random variables
determining $\vZ_s$.
Let $\bbP$ denote the joint distribution of $(X^{(H_s)} : s \in
[\mu_H])$; let $\bbP_s$ be its marginal over $X^{(H_s)}$, and let
$\bbP_{1:s+1}$ be its marginal over
$(X^{(H_1)},X^{(H_2)},\dotsc,X^{(H_{s+1})})$.
Let $\wt\bbP$ be the product distribution formed from the marginals
$\bbP_1,\bbP_2,\dotsc,\bbP_{\mu_H}$, so $\wt\bbP$ governs the joint
distribution of $(\wt\vZ_s : s \in [\mu_H])$.
The result from \textcite[Corollary 2.7]{Yu94} implies for any event $E$,
\[
  |\bbP(E) - \wt\bbP(E)| \leq (\mu_H-1) \beta(\bbP)
\]
where
\[
  \beta(\bbP)
  :=
  \max_{1 \leq s \leq \mu_H-1}
  \bbE\Parens{
    \Norm{
      \bbP_{1:s+1}(\cdot\,|X^{(H_1)},X^{(H_2)},\dotsc,X^{(H_s)}) - \bbP_{s+1}
    }_{\tv}
  }
  \,.
\]
Here, $\norm{\cdot}_{\tv}$ denotes the total variation norm.
The number $\beta(\bbP)$ can be recognized to be the
\emph{$\beta$-mixing coefficient} of the stochastic process
$\{X^{(H_s)}\}_{s \in [\mu_H]}$.
This result implies that the bound from \cref{eq:indpt-bound} for
$\norm{\sum_{s=1}^{\mu_H} \wt\vZ_s/\mu_H}$ also holds for
$\norm{\sum_{s=1}^{\mu_H} \vZ_s/\mu_H}$, except the probability
bound increases from $\delta$ to $\delta + (\mu_H-1)\beta(\bbP)$:
\begin{equation}
  \bbP\Parens{
    \Norm{ \frac1{\mu_H} \sum_{s=1}^{\mu_H} \vZ_s}
    >
    \sqrt{
      \frac{4\Parens{ d_{\vP} + 2 } \ln\frac{4d}{\delta}}
      {\mu_H}
    }
    + \frac{2\Parens{ \frac1{\pimin} + 2 } \ln\frac{4d}{\delta}}
    {3\mu_H}
  } \leq \delta + (\mu_H-1)\beta(\bbP)
  .
  \label{eq:dpt-bound}
\end{equation}
By the triangle inequality,
\[
  \beta(\bbP)
  \leq
  \max_{1 \leq s \leq \mu_H-1}
  \bbE\Parens{
    \vphantom{\bigg\vert}
    \Norm{
      \bbP_{1:s+1}(\cdot\,|X^{(H_1)},X^{(H_2)},\dotsc,X^{(H_s)})
      - \bbP^{\vpi}
    }_{\tv}
    +
    \Norm{
      \bbP_{s+1}
      - \bbP^{\vpi}
    }_{\tv}
  }
\]
where $\bbP^{\vpi}$ is the marginal distribution of $X^{(H_1)}$ under
the stationary chain.
Using the Markov property and integrating out $X_t$ for $t >
\min H_{s+1} = a'+2sa+1$,
\[
  \Norm{
    \bbP_{1:s+1}(\cdot\,|X^{(H_1)},X^{(H_2)},\dotsc,X^{(H_s)})
    - \bbP^{\vpi}
  }_{\tv}
  =
  \Norm{
    \cL(X_{a'+2sa+1}\,|X_{a'+(2s-1)a+1})
    - \vpi
  }_{\tv}
\]
where $\cL(Y|Z)$ denotes the conditional distribution of $Y$ given
$Z$.
We bound this distance using standard arguments for bounding the
mixing time in terms of the \emph{relaxation time}
$1/\gap$~\parencite[see, e.g., the proof of Theorem 12.3 of][]{LePeWi08}:
for any $i \in [d]$,
\[
  \Norm{
    \cL(X_{a'+2sa+1}\,|X_{a'+(2s-1)a+1}=i)
    - \vpi
  }_{\tv}
  =
  \Norm{
    \cL(X_{a+1}\,|X_1=i)
    - \vpi
  }_{\tv}
  \leq
  \frac{
    \exp\Parens{ -a\gap }
  }{\pimin}
  .
\]
The distance $\norm{ \bbP_{s+1} - \bbP^{\vpi} }_{\tv}$ can be bounded
similarly:
\begin{align*}
  \Norm{
    \bbP_{s+1}
    - \bbP^{\vpi}
  }_{\tv}
  & =
  \Norm{
    \cL(X_{a'+2sa+1})
    - \vpi
  }_{\tv}
  \\
  & =
  \Norm{
    \sum_{i=1}^d \bbP(X_1 = i)
    \cL(X_{a'+2sa+1}\,|X_1 = i)
    - \vpi
  }_{\tv}
  \\
  & \leq
  \sum_{i=1}^d \bbP(X_1 = i)
  \Norm{
    \cL(X_{a'+2sa+1}\,|X_1 = i)
    - \vpi
  }_{\tv}
  \\
  & \leq
  \frac{
    \exp\Parens{-(a'+2sa)\gap}
  }{\pimin}
  \leq
  \frac{
    \exp\Parens{-a\gap}
  }{\pimin}
  .
\end{align*}
We conclude
\[
  (\mu_H-1)\beta(\bbP)
  \leq (\mu_H-1)\frac{2\exp(-a\gap)}{\pimin}
  \leq \frac{2(n-2)\exp(-a\gap)}{\pimin}
  \leq \delta
\]
where the last step follows from the block length assumption
\cref{eq:block-length}.

We return to the decomposition from \cref{eq:blocking}.
We apply \cref{eq:dpt-bound} to both the $H_s$ blocks and the $T_s$
blocks, and combine with \cref{eq:first-block} to obtain the following
probabilistic bound.
Pick any $\delta \in (0,1)$, let the block length be
\[
  a
  := \ceil{ a_\delta }
  =
  \Ceil{
    \frac{1}{\gap}\ln\frac{2(n-2)}{\pimin\delta}
  }
  ,
\]
so
\[
  \min\{\mu_H,\mu_T\}
  =
  \Floor{
    \frac{n-1-a'}{2a}
  }
  \geq
  \frac{n-1}
  {
    2\Parens{
      1 + \frac1{\gap}\ln\frac{2(n-2)}{\pimin\delta}
    }
  }
  -2
  =: \mu
  .
\]
If
\begin{equation}
  n \geq 7 + \frac6{\gap} \ln \frac{2(n-2)}{\pimin\delta}
  \geq 3a
  ,
  \label{eq:min-n2}
\end{equation}
then with probability at least $1-4\delta$,
\begin{multline*}
  \Norm{\Diag(\vpi)^{-1/2}\Parens{\wh\vM - \bbE[\wh\vM]} \Diag(\vpi)^{-1/2} }
  \\
  \leq
  \frac{4
    \Ceil{
      \frac{1}{\gap}\ln\frac{2(n-2)}{\pimin\delta}
    }
  }{\pimin(n-1)}
  +
  \sqrt{
    \frac{4\Parens{ d_{\vP} + 2 } \ln\frac{4d}{\delta}}
    {\mu}
  }
  + \frac{2\Parens{ \frac1{\pimin} + 2 } \ln\frac{4d}{\delta}}
  {3\mu}
  .
\end{multline*}

\subsubsection{The bound on $\norm{\errm}$}
Combining the probabilistic bound from above with the bound on the
bias from \cref{eq:biasbound}, we obtain the following.
Assuming the condition on $n$ from \cref{eq:min-n2}, with probability
at least $1-4\delta$,
\begin{multline}
  \norm{\errm}
  \leq
  \frac{1}{(n-1)\gap\pimin}
  +
  \frac{4
    \Ceil{
      \frac{1}{\gap}\ln\frac{2(n-2)}{\pimin\delta}
    }
  }{\pimin(n-1)}
  \\
  +
  \sqrt{
    \frac{4\Parens{ d_{\vP} + 2 } \ln\frac{4d}{\delta}}
    {\mu}
  }
  + \frac{2\Parens{ \frac1{\pimin} + 2 } \ln\frac{4d}{\delta}}
  {3\mu}
  \leq
  C' \Parens{ \sqrt{\veps} + \veps }
  ,
  \label{eq:doubletbound}
\end{multline}
for some suitable constant $C'>0$, where $\veps$ as defined in \cref{lem:gap}.

\subsection{Overall error bound}
Observe that the assumption on the sequence length in \cref{eq:min-n} implies
the conditions in \cref{eq:min-n1} and \cref{eq:min-n2} for a suitable choice
of $C>0$.
With this assumption, there is a $1-5\delta$ probability event in which
\cref{eq:singletonboundsimple,eq:hatpimin,eq:doubletbound} hold; in particular,
we have the bound on $\norm{\errm}$ from \cref{eq:doubletbound}.
In this event, the bound on $\norm{\errp}$ in \cref{eq:singletonbound} also
holds, and the claimed bound on $\norm{\wh\vL - \vL}$ follows from combining
the bound in \cref{eq:vlsimple} with the bounds on $\norm{\errp}$ and
$\norm{\errm}$:
\begin{align*}
  \norm{\wh\vL - \vL}
  & \leq
  \norm{\errm}
  + \norm{\errm}^2
  + 3\norm{\errp}
  \\
  & \leq
  4C' \Parens{ \sqrt{\veps} + \veps }
  + {C'}^2 \Parens{ \sqrt{\veps} + \veps }^2
  \leq C \Parens{ \sqrt{\veps} + \veps + \veps^2 } ,
\end{align*}
where $\veps$ is defined in the statement of \cref{lem:gap}.
The proof of \cref{lem:gap} now follows by replacing $\delta$ with $\delta/5$.
\hfill\qed

 \section{Proof of \cref{thm:bootstrap}}\label{sec:proof-bootstrap}
In this section, we prove \cref{thm:bootstrap}.

We call $\hatgap$ of \cref{thm:err} the \emph{initial estimator}.
Let $C$ be the constant from \cref{thm:err}, and define
\[
  n_1 = n_1(\ep; \delta, \gap) :=
  \frac{3C^2}{\ep^2\pimin \gap } \cdot \Parens{ \log\frac{d}{\delta} } \cdot \Parens{ \log\frac{3 C^2}{\ep^2\pi_\star^2 \gap \delta} }
\]
and
\[
  M(n; \delta, \gap)
  :=
  C \,
    \sqrt{\frac{\log\frac{d}{\delta}\cdot\log\frac{n}{\pimin\delta}}{\pimin\gap n}}
  ,
\]
which is the right-hand side of \cref{eq:gapbound}.
Observe that
\[
M(n_1; \delta, \gap) 
\leq \ep \sqrt{
  \frac{
    \log\frac{3C^2}{\ep^2\pimin^2\gap\delta}
    + \log\log\frac{d}{\delta}
    + \log\log\frac{3C^2}{\ep^2\pimin^2\gap\delta}
  }
  {
    3\log\frac{3C^2}{\ep^2\pimin^2\gap^2\delta}
  }
}
\leq \ep.
\]
(Each term in the numerator under the radical is at most a third of the
denominator. We have used that $\pi_\star \leq 1/d$ in comparing the second
term in the numerator to the denominator.)

For $a > 0$, the spectral gap of the chain with transition matrix $\vP^{a}$ is
denoted by $\gap(a)$, and the initial estimator of $\gap(a)$, based on $n/a$
steps of $\vP^a$, is denoted by $\hatgap(a)$.
Note that
\[
  \gap(a) = 1 - (1-\gap)^{a}\,.
\]

Define $K_{\gap} := \lfloor \log_2(1/\gap) \rfloor$ and, for any $\delta \in
(0,1)$, $\delta_{\gap} = \delta_{\gap}(\delta) := \delta/(K_{\gap} + 1)$.

\begin{proposition} \label{Prop:gammaA}
  Fix $\ep \in (0,0.01)$ and $\delta \in (0,1)$.
  Let $A$ be the random variable defined in the estimator of
  \cref{thm:bootstrap} (which depends on $(X_t)_{t=1}^n$).
  If $n > n_1(\ep/\sqrt{2};\delta_{\gap}, \gap)$,
  then there is an event $G(\ep)$ having probability at
  least $1-\delta$, such that on $G(\ep)$,
  \begin{align*}
    0.30 < \gap(A)  < 0.54 & \quad  \text{if } \gap < 1/2 \,, \\
    A = 1 & \quad \text{if $\gap \geq 1/2$} \,.
  \end{align*}
  Moreover, on $G(\ep)$, the initial estimator $\hatgap(A)$ applied
  to the chain $(X_{As})_{s=1}^{n/A}$ satisfies
  \begin{align}
  \label{eq:gapboundeps}
    | \hatgap(A) - \gap(A) | \le \ep \,. 
  \end{align}
\end{proposition}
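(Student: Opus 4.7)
My plan is to build the event $G(\ep)$ from a union bound over the at most $K_\gap + 1$ values of $a = 2^k$ that the doubling estimator could query, apply \cref{thm:err} at confidence $\delta_\gap$ to the $2^k$-skipped chain at each such $k$, and then reason deterministically on $G(\ep)$ about when the algorithm stops and what $\gap(A)$ looks like.

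Step 1 (union bound). For each $k \in \{0,1,\dots,K_\gap\}$, the $2^k$-skipped chain $(X_{2^k s})$ is ergodic and reversible with the same stationary distribution $\vpi$ (hence the same $\pimin$) and spectral gap $\gap(2^k) = 1-(1-\gap)^{2^k}$, since the eigenvalues of $\vP^{2^k}$ are the $2^k$-th powers of those of $\vP$ and one of $\lambda_2$ or $|\lambda_d|$ realizes $1-\gap$. Applying \cref{thm:err} at confidence $\delta_\gap$ gives $|\hatgap(2^k) - \gap(2^k)| \leq M(n/2^k; \delta_\gap, \gap(2^k))$ with probability at least $1-\delta_\gap$; intersecting these events defines $G(\ep)$, with $\Pr(G(\ep)) \geq 1 - (K_\gap+1)\delta_\gap = 1-\delta$.

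Step 2 (each accuracy is at most $\ep$). I will show $M(n/2^k; \delta_\gap, \gap(2^k)) \leq \ep$ for every $k \leq K_\gap$. Since $2^{K_\gap}\gap \leq 1$, the elementary inequality $1 - e^{-x} \geq x/2$ for $x \in [0,1]$ yields $\gap(2^k) \geq 2^k\gap / 2$, so $\pimin\,\gap(2^k)\,(n/2^k) \geq \pimin\,\gap\,n/2$; combined with the monotonicity of the $\log(n'/(\pimin\delta))$ factor in $n'$, this gives $M(n/2^k; \delta_\gap, \gap(2^k)) \leq \sqrt{2}\,M(n; \delta_\gap, \gap)$. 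The assumption $n > n_1(\ep/\sqrt{2}; \delta_\gap, \gap)$ is designed precisely to absorb this $\sqrt{2}$: by the same calculation that verifies $M(n_1; \delta, \gap) \leq \ep$ right after the definition of $n_1$, we get $M(n; \delta_\gap, \gap) \leq \ep/\sqrt 2$, hence $M(n/2^k; \delta_\gap, \gap(2^k)) \leq \ep$.

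Step 3 (stopping analysis and conclusion). On $G(\ep)$, every $\hatgap(2^k)$ lies within $\ep$ of $\gap(2^k)$ for $k \leq K_\gap$. If $\gap \geq 1/2$, then $\hatgap(1) \geq 1/2 - \ep > 0.31$, so the algorithm stops at $k=0$ and $A = 1$. If $\gap < 1/2$, then $2^{K_\gap}\gap > 1/2$ forces $\gap(2^{K_\gap}) \geq 1 - e^{-1/2} > 0.39$, whence $\hatgap(2^{K_\gap}) > 0.31$; thus the algorithm exits via the $>0.31$ rule at some $k^* \leq K_\gap$ (never via $a=n$, since $n \gg 2^{K_\gap}$). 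From $\hatgap(2^{k^*}) > 0.31$ we get $\gap(A) > 0.30$. For the upper bound, if $k^* = 0$ then $\gap(A) = \gap < 1/2 < 0.54$; otherwise $\hatgap(2^{k^*-1}) \leq 0.31$ gives $\gap(2^{k^*-1}) \leq 0.31 + \ep$, and the identity $\gap(2a) = \gap(a)\bigl(2 - \gap(a)\bigr)$ yields $\gap(A) \leq (0.31+\ep)(1.69-\ep) < 0.54$ for $\ep < 0.01$. The final bound $|\hatgap(A) - \gap(A)| \leq \ep$ is immediate from $A \in \{2^0,\dots,2^{K_\gap}\}$ and the definition of $G(\ep)$.

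The hard part will be Step 2 — matching the $\sqrt{2}$ slack produced by the lower bound $\gap(2^k) \geq 2^k\gap/2$ against the $\ep/\sqrt{2}$ already built into the hypothesis via $n_1(\ep/\sqrt{2};\cdot,\cdot)$. Everything else is bookkeeping: confirming that $\vP^{2^k}$ is reversible with spectral gap $1-(1-\gap)^{2^k}$, handling the edge case $\gap \geq 1/2$ (where $K_\gap \in \{0,1\}$), and checking that the $\log$ factors in $M$ only improve when $n$ is replaced by $n/2^k$.
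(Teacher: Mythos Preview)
Your proposal is correct and follows essentially the same approach as the paper: the paper defines exactly your event $G(\ep)=\bigcap_{k=0}^{K_\gap}\{|\hatgap(2^k)-\gap(2^k)|\le\ep\}$, proves the key inequality $\gap(a)\ge a\gap/2$ for $a\gap\le 1$ (via a second-order Taylor expansion rather than your route through $(1-\gap)^a\le e^{-a\gap}$ and $1-e^{-x}\ge x/2$), and then carries out the identical stopping analysis, deriving the upper bound on $\gap(A)$ from $\gap(A)=1-(1-\gap(A/2))^2$ with $\gap(A/2)\le 0.32$. The only structural difference is that the paper packages your Step~2 as a separate lemma (\cref{Lem:agamma}).
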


The proof of \cref{Prop:gammaA} is based on the following lemma.
\begin{lemma}  \label{Lem:agamma}
  Fix $n \geq n_1(\ep/\sqrt{2} ; \delta, \gap)$.
  If $a\gap \leq 1$, then
  \[
    \Pr(|\gap(a) - \hatgap(a)| \le \ep) > 1 - \delta
    .
  \]
\end{lemma}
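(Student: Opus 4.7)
The plan is to reduce the lemma to a direct application of \cref{thm:err} (specifically the bound \cref{eq:gapbound}) to the $a$-skipped chain $(X_{as})_{s=1}^{n/a}$, whose transition matrix is $\vP^a$. The key structural observations are: (i) $\vP^a$ is reversible with respect to the same stationary distribution $\vpi$, so its minimum stationary probability is also $\pimin$; (ii) its absolute spectral gap is exactly $\gap(a) = 1-(1-\gap)^a$; and (iii) the chain length used in the estimator is $n/a$. Applying \cref{eq:gapbound} therefore gives, with probability at least $1-\delta$,
\[
  |\hatgap(a) - \gap(a)| \ \leq\  M(n/a;\delta,\gap(a)).
\]

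Next I would simplify $M(n/a;\delta,\gap(a))$ under the hypothesis $a\gap \leq 1$. The main analytic input here is the elementary comparison
\[
  \gap(a) \ =\  1-(1-\gap)^a \ \geq\ 1 - e^{-a\gap} \ \geq\ \tfrac{a\gap}{2},
\]
valid because $1-e^{-x}\geq x/2$ for $x\in[0,1]$. Plugging this lower bound into the denominator of $M(n/a;\delta,\gap(a))$ causes the factor of $a$ to cancel against the $1/a$ in the sample size, so after using $a\geq1$ in the $\log$ term,
\[
  M(n/a;\delta,\gap(a)) \ \leq\  \sqrt{2}\,M(n;\delta,\gap).
\]

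Finally I would invoke the defining property of $n_1$: by construction of $n_1(\cdot;\delta,\gap)$ we have $M(n_1(\eta;\delta,\gap);\delta,\gap) \leq \eta$, and since $n\mapsto M(n;\delta,\gap)$ is decreasing (up to the benign $\log n$ factor, which the proof of the preceding unnumbered display already handles), the hypothesis $n\geq n_1(\ep/\sqrt{2};\delta,\gap)$ yields $M(n;\delta,\gap)\leq \ep/\sqrt{2}$. Chaining the three inequalities gives $|\hatgap(a)-\gap(a)|\leq \ep$ on an event of probability at least $1-\delta$, as required.

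The only nontrivial obstacle is the clean handling of the logarithmic factor in $M$: one must verify that replacing $n/a$ by $n$ inside $\log(n/(\pimin\delta))$ is monotone in the right direction (which holds because $a\geq1$), and that the definition of $n_1$ already absorbs the $\sqrt{2}$ blow-up. Everything else is a book-keeping exercise, since reversibility, the value of $\pimin$ for the skipped chain, and the identity $\gap(a)=1-(1-\gap)^a$ are all immediate from the spectral decomposition of $\vP$.
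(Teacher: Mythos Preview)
Your proposal is correct and follows essentially the same approach as the paper: apply \cref{thm:err} to the $a$-skipped chain, use $\gap(a)\geq a\gap/2$ to cancel the factor of $a$ in $M(n/a;\delta,\gap(a))$, and then invoke the defining property of $n_1$ to absorb the $\sqrt{2}$. The only cosmetic difference is that you obtain $\gap(a)\geq a\gap/2$ via $(1-\gap)^a\le e^{-a\gap}$ and $1-e^{-x}\ge x/2$ on $[0,1]$, whereas the paper uses a second-order Taylor expansion of $(1-\gap)^a$; both arguments are valid under $a\gap\le 1$.
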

\begin{proof}
Recall the bound $M(n;\delta,\gap)$ on the right-hand side
of \cref{eq:gapbound}.  If $\gap(a) \geq \gap a/2$,
then
\[
  M(n/a; \delta, \gap(a))
  \leq \sqrt{2} M(n; a\delta, \gap)
  \leq \sqrt{2} M(n; \delta, \gap)
  \leq \sqrt{2} \cdot \frac{\ep}{\sqrt{2}}
  =\ep \,,
\]
and the lemma follows from applying \cref{thm:err} to
the $\vP^a$-chain.  We now show that $\gap(a) \geq \gap a/2$. 
A Taylor expansion of $(1-\gap)^a$ implies that there exists $\xi
  \in [0, \gap] \subseteq [0, 1/a]$ such that
\begin{equation*}
  \gap(a)  = 1 - (1-\gap)^a 
  = \gap a - \frac{a(a-1)(1-\xi)^{a-2}\gap^2}{2} 
  \geq \frac{\gap a}{2} \,.
\end{equation*}
(We have used the hypothesis $a \gap \leq 1$ in the inequality.)
\end{proof}

\begin{proof}[Proof of \cref{Prop:gammaA}]
  Define the events $G(a; \ep) := \{ |\gap(a) - \hatgap(a)| \le \ep \}$, and
  $G = G(\ep) := \bigcap_{k=0}^{K_{\gap}} G(2^k; \ep)$.
  If $k \leq K_{\gap}$, then $\gap 2^k \leq \gap 2^{\log_2(1/\gap)} \leq 1$ and
  \cref{Lem:agamma} implies that
  \[
  \Pr(G^c) \leq \sum_{k=0}^{K_{\gap}} \Pr(G(2^k; \ep)^c)
   \leq (K_{\gap}+1) \cdot \frac{\delta}{K_{\gap} + 1} = \delta \,.
  \]
  On $G$, if $\gap \geq 1/2$, then 
  $|\hatgap - \gap| \le 0.01$, and
  consequently $\hatgap \geq 0.49 > 0.31$.
  In this case, $A=1$ on $G$.

On the event $G$, if the algorithm has not terminated by step $k-1$, 
then the following hold:
\begin{enumerate} %
  \item If ${\gap}({2^k}) \leq 0.30$, then the algorithm does not
    terminate at step $k$.
  \item If ${\gap}({2^k}) > 0.32$, then the algorithm terminates at
    step $k$.
\end{enumerate}
Also, assuming $\gap\le 1/2$,
\[
{\gap}({2^{K_{\gap}}}) \geq 1 - (1-\gap)^{\frac{1}{2\gap}}
\geq  1 - e^{-1/2} \geq 0.39 \,,
\]
so the algorithm always terminates before $k = K_{\gap}$ on $G$ and thus \eqref{eq:gapboundeps} holds on $G$.

Finally, on $G$, if $A > 1$, then ${\gap}(A/2) \leq 0.32$, whence
\[
\gap(A) = 1 - (1-\gap(A/2))^2
\leq  1 - (0.68)^2 < 0.54 \,.
\]
If $\gap < 1/2$ and $A = 1$, then  $\gap(A) = \gap \leq 1/2$.
\end{proof}

We now prove \cref{thm:bootstrap}.

\begin{proof}[Proof of \cref{thm:bootstrap}]
Let
  \begin{equation} \label{Eq:t0}
  n_0(\ep; \delta, \gap, \pi_\star) = n_0(\ep) := 
    \frac{\cL}{\pi_\star \gap \ep^2}
    \,,
  \end{equation}
where
\begin{equation} \label{Eq:LDef}
    \cL := 
    3\cdot (16\sqrt2)^2 \cdot \Parens{ \log\frac{d(\lfloor \log_2(1/\gap) \rfloor + 1)}{\delta} } \cdot \Parens{ \log\frac{ 3\cdot (16\sqrt2)^2 \cdot  C^2(\lfloor \log_2(1/\gap) \rfloor + 1)}{\ep^2  \pi_\star^2 \gap \delta} }
    \,,
\end{equation}
and $C$ is the constant in \cref{eq:gapbound}.

  Fix $n > n_0(\ep) = n_1(\ep/(16\sqrt{2}); \delta_{\gap}, \gap)$.
Let $A$ and $G$ be as defined in \cref{Prop:gammaA}.  Assume we are on the event 
$G = G(\ep/16)$ for the
rest of this proof.

Suppose first that $\gap < 1/2$.
We have $0.30 < \gap(A) < 0.54$, and
\[
|\hatgap(A) - \gap(A)| \le \frac{\ep}{16} < 0.01 \,,
\]
so both $\gap(A)$ and $\hatgap(A)$ are in $[0.29, 0.55]$,
say. 

Let $h(x) = 1 - (1-x)^{1/A}$, so $\gap = h(\gap(A))$ and $\tilgap = h(\hatgap(A))$.
Since $(1-x)^{1/A} \leq 1 - x/A$, we have 
\[
\frac{1}{1-(1-x)^{1/A}} \leq \frac{A}{x} \,.
\]
Consequently, on $[0.29,0.55]$,
\[
\left| \frac{d}{dx} \log h(x) \right|
= \frac{\frac{1}{A}(1-x)^{1/A-1}}{1-
(1-x)^{1/A}}
\leq \frac{1}{A(1-x)}\frac{A}{x} =
\frac{1}{(1-x)x} \leq \frac{1}{(0.45)(0.29)} < 8 \,.
\]
Thus, $|\frac{d}{dx} \log h(x)|$ is bounded (by $8$) on $[0.29,0.55]$. 
We have
\[
|\log( h(\hatgap(A))/\gap )| =
|\log h(\gap(A)) - \log h(\hatgap(A)) | \leq 8
|\gap(A) - \hatgap(A)| \leq 8 \frac{\ep}{16}
\leq \frac{\ep}{2} \,.
\]
Thus,
\[
\frac{\tilgap}{\gap} = 
  \frac{h(\hatgap(A))}{\gap} \leq e^{ \ep/2} 
\leq 1 +  \ep \,.
\]
  Similarly, $\frac {\gap}{h(\hatgap(A))} \leq e^{\ep/2}$, so
\[
\frac{\tilgap}{\gap} = 
\frac{h(\hatgap(A))}{\gap} \geq e^{- \ep/2} \geq 1 -  \ep
\,.
\]

Now instead suppose that $\gap \geq 1/2$.
Then $A = 1$ on the event $G$, and 
\[
|\tilgap - \gap | < \frac{\ep}{16} \,,
\]
so
\[
\left| \frac{\tilgap}{\gap} - 1 \right| <
\frac{\ep}{16 \gap} \leq  \ep \,.
\qedhere
\]
\end{proof}

  \section{Proof of \cref{thm:empirical}}\label{sec:proof-empirical}
In this section, we derive \cref{alg:empest} and prove \cref{thm:empirical}.

\subsection{{Estimators for {$\vpi$} and {$\gap$}}}

The algorithm forms the estimator $\wh\vP$ of $\vP$ using Laplace smoothing:
\[
  \wh{P}_{i,j}
  := \frac{N_{i,j} + \alpha}{N_i + d\alpha}
\]
where
\[
  N_{i,j} := \Abs{ \Braces{ t \in [n-1] : (X_t,X_{t+1}) = (i,j) } } ,
  \quad
  N_i := \Abs{ \Braces{ t \in [n-1] : X_t = i } }
\]
and $\alpha>0$ is a positive constant, which we set beforehand as $\alpha := 1/d$
for simplicity.

As a result of the smoothing, all entries of $\wh\vP$ are positive,
and hence $\wh\vP$ is a transition probability
matrix for an ergodic Markov chain.
We let $\hat\vpi$ be the unique stationary distribution for $\wh\vP$.
Using $\hat\vpi$, we form an estimator $\Sym(\wh\vL)$ of $\vL$ using:
\[
  \Sym(\wh\vL) := \frac12 \parens{ \wh\vL + \wh\vL^\t }
  ,
  \qquad
  \wh\vL := \Diag(\hat\vpi)^{1/2} \wh\vP \Diag(\hat\vpi)^{-1/2}
  .
\]
Let $\hat\lambda_1 \geq \hat\lambda_2 \geq \dotsb \geq \hat\lambda_d$
be the eigenvalues of $\Sym(\wh\vL)$ (and in fact, we have $1 =
\hat\lambda_1 > \hat\lambda_2$ and $\hat\lambda_d > -1$).
The algorithm estimates the spectral gap $\gap$ using
\[
  \hatgap := 1 - \max\{ \hat\lambda_2, |\hat\lambda_d| \} .
\]

\subsection{Empirical bounds for {$\vP$}}
\label{sec:P-obs-bound}

We make use of a simple corollary of Freedman's inequality for
martingales \parencite[Theorem 1.6]{Fre75}.
\begin{theorem}[Freedman's inequality]
  \label{thm:freedman}
  Let $(Y_t)_{t \in \bbN}$ be a bounded martingale difference sequence
  with respect to the filtration $\cF_0 \subset \cF_1 \subset \cF_2
  \subset \dotsb$; assume for some $b>0$, $|Y_t| \leq b$ almost surely
  for all $t \in \bbN$.
  Let $V_k := \sum_{t=1}^k \EE{Y_t^2|\cF_{t-1}}$ and $S_k :=
  \sum_{t=1}^k Y_t$ for $k \in \bbN$.
  For all $s, v > 0$,
  \[
    \Pr\Brackets{
      \exists k \in \bbN \;\st\,
      S_k > s
      \,\wedge\,
      V_k \leq v
    }
    \leq \Parens{
      \frac{v/b^2}{s/b+v/b^2}
    }^{s/b+v/b^2}
    e^{s/b}
    = \exp\Parens{
      -\frac{v}{b^2} \cdot h\Parens{\frac{bs}{v}}
    }
    \,,
  \]
  where $h(u) := (1+u)\ln(1+u) - u$.
\end{theorem}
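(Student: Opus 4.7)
The plan is to carry out the standard exponential-supermartingale argument for bounded martingale differences and then optimize over the exponential parameter. For each $\theta > 0$, I would introduce the auxiliary process
\[
  M_k(\theta) := \exp\Parens{\theta S_k - g(\theta) V_k}\,,\qquad g(\theta) := \frac{e^{\theta b} - 1 - \theta b}{b^2}\,,
\]
and show that $(M_k(\theta))_k$ is a non-negative $(\cF_k)$-supermartingale with $M_0(\theta) = 1$. The crux is the conditional moment generating function bound
\[
  \bbE\brackets{e^{\theta Y_t} \,\big|\, \cF_{t-1}} \leq \exp\Parens{g(\theta)\,\bbE\brackets{Y_t^2 \,\big|\, \cF_{t-1}}}\,,
\]
which I would derive by first verifying that $y \mapsto (e^y - 1 - y)/y^2$ (continuously extended to $1/2$ at $0$) is nondecreasing on $\bbR$. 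This monotonicity gives the pointwise bound $e^{\theta x} \leq 1 + \theta x + g(\theta)\, x^2$ for all $|x| \leq b$; taking conditional expectations, using $\bbE[Y_t \mid \cF_{t-1}] = 0$, and applying $1 + u \leq e^u$ then yields the MGF bound, from which the supermartingale property is immediate.

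Next, to control the supremum over $k$ that appears in the theorem's event, I would introduce the stopping time
\[
  \tau := \inf\braces{k \geq 1 : S_k > s \text{ and } V_k \leq v}\,.
\]
Because $V_k$ is nondecreasing in $k$ (it is a cumulative sum of nonnegative conditional variances), once $V_k$ exceeds $v$ it cannot drop back, so the event in the statement coincides exactly with $\braces{\tau < \infty}$. Applying the optional stopping theorem for non-negative supermartingales at the bounded time $\tau \wedge k$, and restricting to $\braces{\tau \leq k}$ where $S_\tau > s$ and $V_\tau \leq v$ force $M_\tau(\theta) \geq \exp(\theta s - g(\theta) v)$, I get
\[
  1 \;=\; M_0(\theta) \;\geq\; \bbE\brackets{M_{\tau\wedge k}(\theta)} \;\geq\; \exp\Parens{\theta s - g(\theta) v}\,\Pr(\tau \leq k)\,.
\]
Sending $k \to \infty$ yields $\Pr(\tau < \infty) \leq \exp(-\theta s + g(\theta) v)$ for every $\theta > 0$.

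Finally, I would optimize over $\theta > 0$. Differentiating $-\theta s + g(\theta) v$ and setting the derivative to zero gives the explicit minimizer $\theta_\star = (1/b)\ln(1 + bs/v)$. Substituting back and simplifying converts the exponent into both equivalent forms stated in the theorem: writing $a := s/b$ and $c := v/b^2$, the bound becomes $(c/(a+c))^{a+c} e^a$, and using $(a+c)\ln(1 + a/c) - a = c\, h(a/c)$ with $h(u) = (1+u)\ln(1+u) - u$ rewrites it as $\exp(-(v/b^2)\,h(bs/v))$. The main delicate step is the conditional MGF bound---specifically establishing monotonicity of $(e^y - 1 - y)/y^2$ and ensuring the resulting inequality $e^{\theta x} \leq 1 + \theta x + g(\theta) x^2$ is valid for both signs of $x$ with $|x| \leq b$; the optional-stopping passage and the closed-form optimization over $\theta$ are then routine.
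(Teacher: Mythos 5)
The paper does not prove this statement: it is imported verbatim as Theorem~1.6 of Freedman (1975), so there is no internal proof to compare against. Your argument is the standard exponential-supermartingale proof of that theorem (essentially Freedman's own): the conditional MGF bound via monotonicity of $y \mapsto (e^y-1-y)/y^2$, optional stopping at the first time $k$ with $S_k>s$ and $V_k\le v$, and optimization at $\theta_\star = b^{-1}\ln(1+bs/v)$; the algebra converting the optimized exponent $-(s/b+v/b^2)\ln(1+bs/v)+s/b$ into the two stated forms checks out. The step you flag as delicate is fine: since $\theta>0$ and $x\le b$ imply $\theta x\le \theta b$, the monotonicity of $(e^y-1-y)/y^2$ gives $e^{\theta x}\le 1+\theta x+g(\theta)x^2$ for all $x\le b$, which covers both signs of $x$ automatically (only the upper bound $Y_t\le b$ is actually needed).
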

Observe that in \Cref{thm:freedman}, for any $x>0$, if $s :=
\sqrt{2vx} + bx/3$ and $z := b^2x/v$, then the probability bound
on the right-hand side becomes
\[
  \exp\Parens{
    -x \cdot \frac{h\Parens{\sqrt{2z}+z/3}}{z}
  }
  \leq
  e^{-x}
\]
since $h(\sqrt{2z}+z/3)/z \geq 1$ for all $z > 0$ (see,
e.g., \textcite[proof of Lemma 5]{audibert2009}).

\begin{corollary}
  \label{cor:freedman}
  Under the same setting as \Cref{thm:freedman}, for any $n \geq 1$,
  $x > 0$, and $c > 1$,
  \[
    \Pr\Brackets{
      \exists k \in [n] \;\st\,
      S_k > \sqrt{2cV_kx} + 4bx/3
    }
    \leq
    \Parens{ 1 + \ceil{\log_c(2n/x)}_+ }
    e^{-x}
    .
  \]
\end{corollary}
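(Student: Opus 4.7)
The plan is to convert Freedman's inequality, which controls $S_k$ only when a deterministic upper bound on $V_k$ is known in advance, into a bound that holds uniformly in the random process $V_k$, via a peeling argument on the range of $V_k$. The observation recorded immediately after \cref{thm:freedman} gives that for every fixed $v > 0$,
\[
  \Pr\Brackets{
    \exists k \in \bbN \;\st\;
    S_k > \sqrt{2vx} + bx/3
    \,\wedge\,
    V_k \leq v
  }
  \leq e^{-x}.
\]
I would tile the range of $V_k$ geometrically and union bound.

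Concretely, set $v_j := (b^2 x / 2)\, c^j$ for $j = 0, 1, 2, \dotsc$. On the base stratum $\{V_k \leq v_0\}$, the bad event in the corollary forces $S_k > \tfrac{4bx}{3}$; since $\sqrt{2 v_0 x} = bx$, this equals $\sqrt{2 v_0 x} + bx/3$, so Freedman with $v = v_0$ bounds the contribution by $e^{-x}$. On the stratum $\{v_{j-1} < V_k \leq v_j\}$ with $j \geq 1$, we have $c V_k > c v_{j-1} = v_j$, hence $\sqrt{2c V_k x} > \sqrt{2 v_j x}$, so the bad event implies
\[
  S_k > \sqrt{2 v_j x} + \tfrac{4bx}{3} > \sqrt{2 v_j x} + bx/3;
\]
Freedman with $v = v_j$ again yields $e^{-x}$ on this stratum.

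Since $V_k \leq k b^2 \leq n b^2$ for all $k \in [n]$, strata with $v_j > n b^2$ are vacuous, so only indices $j = 0, 1, \dotsc, J$ are needed, where $J$ is the least nonnegative integer with $v_J \geq n b^2$, namely $c^J \geq 2n/x$, so $J = \ceil{\log_c(2n/x)}_+$ works. A union bound over the resulting $1 + \ceil{\log_c(2n/x)}_+$ strata gives the stated inequality.

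The only real subtlety is the calibration of the base level: choosing $v_0 = b^2 x / 2$ is forced by the fact that it makes $\sqrt{2 v_0 x} = bx$ exactly match the extra slack $4bx/3 - bx/3 = bx$ in the corollary's threshold over Freedman's. This matching is what lets each stratum be absorbed at cost only $e^{-x}$ and produces the clean leading constant $1$ in $1 + \ceil{\log_c(2n/x)}_+$ rather than something worse.
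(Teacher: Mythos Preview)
Your proposal is correct and follows essentially the same approach as the paper: the same geometric grid $v_j = (b^2x/2)c^j$, the same base-stratum calibration $\sqrt{2v_0x}=bx$, and the same count $1+\ceil{\log_c(2n/x)}_+$ of strata via $V_k\le nb^2$. The paper packages the case split slightly differently---introducing the intermediate threshold $\sqrt{2\max\{v_0,cV_k\}x}+bx/3$ and verifying at the end that it is dominated by $\sqrt{2cV_kx}+4bx/3$---but this is purely cosmetic; your explicit separation of the base stratum is equivalent.
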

\begin{proof}
  Define $v_i := c^i b^2x/2$ for $i = 0, 1, 2, \dotsc, \ceil{
  \log_c(2n/x) }_+$, and let $v_{-1} := -\infty$.
  Then, since $V_k \in [0,b^2n]$ for all $k\in[n]$,
  \begin{align*}
    \MoveEqLeft{
      \Pr\Brackets{
        \exists k \in [n] \;\st\,
        S_k > \sqrt{2\max\braces{v_0,cV_k}x} + bx/3
      }
    }
    \\
    & =
    \sum_{i=0}^{\ceil{\log_c(2n/x)}_+}
    \Pr\Brackets{
      \exists k \in [n] \;\st\,
      S_k > \sqrt{2\max\braces{v_0,cV_k}x} + bx/3
      \,\wedge\, v_{i-1} < V_k \leq v_i
    }
    \\
    & \leq
    \sum_{i=0}^{\ceil{\log_c(2n/x)}_+}
    \Pr\Brackets{
      \exists k \in [n] \;\st\,
      S_k > \sqrt{2\max\braces{v_0,cv_{i-1}}x} + bx/3
      \,\wedge\, v_{i-1} < V_k \leq v_i
    }
    \\
    & \leq
    \sum_{i=0}^{\ceil{\log_c(2n/x)}_+}
    \Pr\Brackets{
      \exists k \in [n] \;\st\,
      S_k > \sqrt{2v_ix} + bx/3
      \,\wedge\, V_k \leq v_i
    }
    \\
    & \leq
    \Parens{ 1 + \ceil{\log_c(2n/x)}_+ }
    e^{-x}\,,
  \end{align*}
  where the final inequality uses \Cref{thm:freedman}.
  The conclusion now follows because
  \[
    \sqrt{2cV_kx} + 4bx/3
    \geq \sqrt{2\max\braces{v_0,cV_k}x} + bx/3
  \]
  for all $k \in [n]$.
\end{proof}

\begin{lemma}
  \label{lem:P-unobs-bound}
  The following holds for any constant $c>1$ with probability at least
  $1-\delta$: for all $(i,j) \in \iset{d}^2$,
  \begin{equation}
    \abs{ \wh{P}_{i,j} - P_{i,j} }
    \leq
    \sqrt{\Parens{\frac{N_i}{N_i+d\alpha}}\frac{2cP_{i,j}(1-P_{i,j})\emptail}{N_i+d\alpha}}
    + \frac{(4/3)\emptail}{N_i+d\alpha}
    + \frac{\abs{\alpha-d\alpha P_{i,j}}}{N_i+d\alpha}\,,
    \label{eq:P-unobs-bound}
  \end{equation}
  where
  \begin{equation}
    \emptail
    := \inf
    \Braces{
      t\geq0 :
      2d^2 \Parens{ 1 + \ceil{\log_c(2n/t)}_+ } e^{-t} \leq \delta
    }
    = O\Parens{ \log\Parens{ \frac{d\log(n)}{\delta} } }
    \,.
    \label{eq:emptail}
  \end{equation}
\end{lemma}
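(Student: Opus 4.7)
The plan is to reduce \Cref{lem:P-unobs-bound} to a direct application of \Cref{cor:freedman} via a standard Markov martingale construction. Fix $(i,j) \in [d]^2$, and with respect to the filtration $\cF_t := \sigma(X_1,\dotsc,X_{t+1})$, define
\begin{align*}
  Y_t := \ind{X_t = i,\, X_{t+1} = j} - \ind{X_t = i}\, P_{i,j}, \qquad t \in [n-1].
\end{align*}
The Markov property gives $\bbE[\ind{X_{t+1}=j} \mid \cF_{t-1}] = P_{X_t,j}$, so $\bbE[Y_t \mid \cF_{t-1}] = 0$, while a direct calculation yields $|Y_t| \le 1$ and $\bbE[Y_t^2 \mid \cF_{t-1}] = \ind{X_t = i}\, P_{i,j}(1-P_{i,j})$. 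Summing over $t \in [n-1]$ gives the predictable variation $V_{n-1} = N_i\, P_{i,j}(1-P_{i,j})$ and partial sum $S_{n-1} = N_{i,j} - N_i\, P_{i,j}$.

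Next I apply \Cref{cor:freedman} with $b = 1$ and $x = \emptail$ to both $(Y_t)_t$ and $(-Y_t)_t$, and take a union bound over all $(i,j) \in [d]^2$. The total failure probability is bounded by $2 d^2 \Parens{1 + \ceil{\log_c(2n/\emptail)}_+} e^{-\emptail}$, which is at most $\delta$ by the very definition of $\emptail$. Hence, on the complementary event, simultaneously for every $(i,j)$,
\begin{align*}
  |N_{i,j} - N_i\, P_{i,j}|
  \le \sqrt{2c\, N_i\, P_{i,j}(1-P_{i,j})\, \emptail} + \tfrac{4}{3}\emptail.
\end{align*}

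I then combine this with the algebraic identity
\begin{align*}
  \wh{P}_{i,j} - P_{i,j}
  = \frac{(N_{i,j} - N_i P_{i,j}) + (\alpha - d\alpha\, P_{i,j})}{N_i + d\alpha},
\end{align*}
apply the triangle inequality, and use
\begin{align*}
  \frac{\sqrt{2c\, N_i\, P_{i,j}(1-P_{i,j})\, \emptail}}{N_i + d\alpha}
  = \sqrt{\Parens{\frac{N_i}{N_i+d\alpha}} \frac{2c\, P_{i,j}(1-P_{i,j})\, \emptail}{N_i+d\alpha}}
\end{align*}
to arrive at \Cref{eq:P-unobs-bound}. The asymptotic rate $\emptail = O(\log(d\log(n)/\delta))$ follows by iterating the defining inequality: any $t \ge \ln(2d^2/\delta)$ already controls the exponential factor, and substituting such a lower bound into $\log_c(2n/t)$ shows that the true $\emptail$ is only larger by an additive $O(\log\log n)$ term.

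The main difficulty here is bookkeeping rather than probability: the factor $2d^2$ inside $\emptail$ must be tracked through the union over both signs and all $d^2$ state pairs, and one must verify that the normalization $N_i / (N_i+d\alpha)$ in the stated bound matches exactly what Freedman's inequality yields. The substantive concentration content is discharged entirely by \Cref{cor:freedman}, whose use of an empirical (rather than worst-case) variance $V_k$ is essential here, since $V_{n-1}$ involves the random visit count $N_i$.
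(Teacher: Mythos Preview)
Your proposal is correct and follows essentially the same route as the paper: both construct the martingale difference $Y_t = \ind{X_t=i}(\ind{X_{t+1}=j}-P_{i,j})$ (up to an index shift), compute $S = N_{i,j}-N_iP_{i,j}$ and $V = N_iP_{i,j}(1-P_{i,j})$, apply \Cref{cor:freedman} with $b=1$ and $x=\emptail$, and union-bound over signs and pairs $(i,j)$. Your write-up is actually more explicit than the paper's in spelling out the algebraic identity for $\wh P_{i,j}-P_{i,j}$ and the resulting normalization $N_i/(N_i+d\alpha)$, which the paper leaves implicit.
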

\begin{proof}
  Let $\cF_t$ be the $\sigma$-field generated by $X_1,X_2,\dotsc,X_t$.
  Fix a pair $(i,j) \in [d]^2$.
  Let $Y_1 := 0$, and for $t \geq 2$,
  \[
    Y_t := \Ind{X_{t-1} = i} (\Ind{X_t=j} - P_{i,j})
    ,
  \]
  so that
  \[
    \sum_{t=1}^n Y_t
    = N_{i,j} - N_i P_{i,j}
    .
  \]
  The Markov property implies that the stochastic process
  $(Y_t)_{t\in[n]}$ is an $(\cF_t)$-adapted martingale difference
  sequence: $Y_t$ is $\cF_t$-measurable and $\EE{Y_t| \cF_{t-1} } =
  0$, for each $t$.
  Moreover, for all $t \in [n]$,
  \[
    Y_t \in [-P_{i,j},1-P_{i,j}]
    \,,
  \]
  and for $t \geq 2$,
  \[
    \EE{Y_t^2| \cF_{t-1} } = \Ind{X_{t-1}=i} P_{i,j} (1-P_{i,j})
    \,.
  \]
  Therefore, by \Cref{cor:freedman} and union bounds, we have
  \[
    \abs{ N_{i,j} - N_i P_{i,j} }
    \leq \sqrt{2c N_i P_{i,j} (1-P_{i,j}) \emptail} +
    \frac{4\emptail}3
  \]
  for all $(i,j) \in [d]^2$.
\end{proof}

\Cref{eq:P-unobs-bound} can be viewed as constraints on the possible
value that $P_{i,j}$ may have (with high probability).
Since $P_{i,j}$ is the only unobserved quantity in the bound from
\cref{eq:P-unobs-bound}, we can numerically maximize $|\wh{P}_{i,j} -
P_{i,j}|$ subject to the constraint in \cref{eq:P-unobs-bound}
(viewing $P_{i,j}$ as the optimization variable).
Let $B_{i,j}^*$ be this maximum value, so we have
\[
  P_{i,j} \in
  \Brackets{
    \wh{P}_{i,j} - B_{i,j}^*,\,
    \wh{P}_{i,j} + B_{i,j}^*
  }
\]
in the same event where \cref{eq:P-unobs-bound} holds.

In the algorithm, we give a simple alternative to computing
$B_{i,j}^*$ that avoids numerical optimization, derived in the spirit
of empirical Bernstein bounds \parencite{audibert2009}.
Specifically, with $c := 1.1$ (an arbitrary choice), we compute
\begin{equation}
  \wh{B}_{i,j}
  :=
  \Parens{
    \sqrt{\frac{c\emptail}{2N_i}}
    + \sqrt{
      \frac{c\emptail}{2N_i}
      +
      \sqrt{\frac{2c\wh{P}_{i,j}(1-\wh{P}_{i,j})\emptail}{N_i}}
      + \frac{(4/3)\emptail + \abs{\alpha-d\alpha\wh{P}_{i,j}}}{N_i}
    }
  }^2
  \label{eq:P-obs-bound}
\end{equation}
for each $(i,j) \in [d]^2$, where $\emptail$ is defined in
\cref{eq:emptail}.
We show in \cref{lem:P-obs-bound} that
\[
  P_{i,j} \in
  \Brackets{
    \wh{P}_{i,j} - \wh{B}_{i,j},\,
    \wh{P}_{i,j} + \wh{B}_{i,j}
  }
\]
again, in the same event where \cref{eq:P-unobs-bound} holds.
The observable bound in \cref{eq:P-obs-bound} is not too far from the
unobservable bound in~\cref{eq:P-unobs-bound}.

\begin{lemma}
  \label{lem:P-obs-bound}
  In the same $1-\delta$ event as from \cref{lem:P-unobs-bound},
  we have
  $P_{i,j} \in \brackets{ \wh{P}_{i,j} - \wh{B}_{i,j},\, \wh{P}_{i,j}
  + \wh{B}_{i,j} }$
  for all $(i,j) \in [d]^2$,
  where $\wh{B}_{i,j}$ is defined in \cref{eq:P-obs-bound}.
\end{lemma}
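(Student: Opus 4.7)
\noindent The plan is to convert the unobservable bound of \cref{lem:P-unobs-bound} into an implicit inequality in the deviation $\Delta := |\wh P_{i,j} - P_{i,j}|$, and then solve this inequality in closed form to recover exactly the definition of $\wh B_{i,j}$. Throughout I condition on the $1-\delta$ event of \cref{lem:P-unobs-bound} and fix $(i,j)\in[d]^2$. Writing $M := N_i + 1$ (so that $d\alpha = 1$ for $\alpha = 1/d$), the bound of that lemma reads
\[
\Delta \;\le\; \sqrt{\frac{N_i}{M}\cdot\frac{2cP_{i,j}(1-P_{i,j})\emptail}{M}} \;+\; \frac{(4/3)\emptail + |1/d - P_{i,j}|}{M}.
\]

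\smallskip

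\noindent To eliminate $P_{i,j}$ from the right-hand side, I would use two elementary inequalities: (a)~$P(1-P) \le \wh P(1-\wh P) + \Delta$, which follows from $|P(1-P) - \wh P(1-\wh P)| = \Delta\cdot|1-P-\wh P| \le \Delta$; and (b)~$|1/d - P| \le |1/d - \wh P| + \Delta$, the triangle inequality. Substituting (a) and (b), applying $\sqrt{a+b}\le\sqrt a+\sqrt b$ to split the resulting square root, and using the denominator estimates $N_i/M\le 1$ and $1/M\le 1/N_i$ yields an inequality of the form $\Delta \le A\sqrt\Delta + B$ with
\[
A \;=\; \sqrt{\tfrac{2c\emptail}{N_i}}, \qquad B \;=\; \sqrt{\tfrac{2c\wh P_{i,j}(1-\wh P_{i,j})\emptail}{N_i}} \;+\; \tfrac{(4/3)\emptail + |1/d-\wh P_{i,j}|}{N_i}.
\]
Viewing this as a quadratic in $\sqrt\Delta$, the standard formula gives $\sqrt\Delta \le A/2 + \sqrt{(A/2)^2 + B}$, so
\[
\Delta \;\le\; \Bigl(\sqrt{\tfrac{c\emptail}{2N_i}} \;+\; \sqrt{\tfrac{c\emptail}{2N_i} + B}\,\Bigr)^{\!2} \;=\; \wh B_{i,j},
\]
which is the desired claim. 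Because the $1-\delta$ event from \cref{lem:P-unobs-bound} holds simultaneously for all $(i,j)\in[d]^2$, so does the resulting bound.

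\smallskip

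\noindent The only delicate point is the substitution step: both (a) and (b) generate residual $O(\Delta/N_i)$ terms on the right-hand side (one inside the square root, one outside), which must be absorbed into the leading $A\sqrt{\Delta}$ and $(4/3)\emptail/N_i$ summands so that the final inequality has the exact shape advertised by $\wh B_{i,j}$. Since $\Delta\le 1$ trivially and $\emptail\ge 1$ at any nontrivial confidence level, this absorption is elementary and affects only constants; the rest is clean algebraic manipulation.
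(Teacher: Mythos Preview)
Your approach is essentially the paper's: both use substitutions (a) and (b), split the square root via $\sqrt{a+b}\le\sqrt a+\sqrt b$, and solve the resulting quadratic in $\sqrt\Delta$. The gap is in your final paragraph. The residual from (a) is not a problem---after the split it \emph{is} precisely the $A\sqrt\Delta$ term. The residual from (b), however, is a linear $\Delta/M$ term on the right, and this cannot be ``absorbed'' into $A\sqrt\Delta$ or into $(4/3)\emptail/N_i$ without changing the constants in $\wh B_{i,j}$. For instance, your loosening $1/M\le 1/N_i$ followed by moving the extra $\Delta/N_i$ left gives $(1-1/N_i)\Delta\le A\sqrt\Delta+B$, and dividing through by $(N_i-1)/N_i$ does not reproduce the formula in \cref{eq:P-obs-bound}. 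Since the lemma asserts that specific $\wh B_{i,j}$, ``affects only constants'' is not enough.

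The paper's fix is clean and needs no absorption: keep the exact denominator $M=N_i+d\alpha$ throughout (do not loosen to $1/N_i$ yet), move the $d\alpha\,\Delta/M$ term coming from (b) to the left to obtain
\[
\Bigl(1-\tfrac{d\alpha}{M}\Bigr)\Delta=\tfrac{N_i}{M}\,\Delta
\]
on the left-hand side, and then divide both sides by $N_i/M$ (valid whenever $N_i>0$). The factor $M/N_i$ exactly converts each right-hand term---which carries $N_i/M^2$ under the radical or $1/M$ outside---into the $1/N_i$ form appearing in $\wh B_{i,j}$, yielding $\Delta\le A\sqrt\Delta+B$ with your $A,B$ on the nose. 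The quadratic step then gives $\wh B_{i,j}$ exactly.
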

\begin{proof}
  Recall that in the $1-\delta$ probability event from
  \cref{lem:P-unobs-bound}, we have for all $(i,j) \in [d]^2$,
  \begin{multline*}
    \abs{ \wh{P}_{i,j} - P_{i,j} }
    =
    \Abs{
      \frac{N_{i,j} - N_i P_{i,j}}{N_i + d\alpha}
      + \frac{\alpha - d\alpha P_{i,j}}{N_i + d\alpha}
    } \\
    \leq
    \sqrt{\frac{2cN_iP_{i,j}(1-P_{i,j})\emptail}{(N_i+d\alpha)^2}}
    + \frac{(4/3)\emptail}{N_i+d\alpha}
    + \frac{\abs{\alpha-d\alpha P_{i,j}}}{N_i+d\alpha}
    .
  \end{multline*}
  Applying the triangle inequality to the right-hand side, we obtain
  \begin{align}
    \abs{ \wh{P}_{i,j} - P_{i,j} }
    & \leq
    \sqrt{
      \frac{
        2cN_i\parens{
          \wh{P}_{i,j}(1-\wh{P}_{i,j})
          + \abs{\wh{P}_{i,j} - P_{i,j}}
        } \emptail
      }{(N_i+d\alpha)^2}
    }
    + \frac{(4/3)\emptail}{N_i+d\alpha}
    \notag \\
    & \qquad
    + \frac{
      \abs{\alpha-d\alpha \wh{P}_{i,j}}
      + d\alpha\abs{\wh{P}_{i,j}-P_{i,j}}
    }{N_i+d\alpha}
    .
    \notag
  \end{align}
  Since $\sqrt{A+B} \leq \sqrt{A} + \sqrt{B}$ for non-negative $A,B$, we
  loosen the above inequality and rearrange it to obtain
  \begin{align*}
    \Parens{ 1 - \frac{d\alpha}{N_i+d\alpha} }
    \abs{\wh{P}_{i,j} - P_{i,j}}
    & \leq
    \sqrt{\abs{\wh{P}_{i,j} - P_{i,j}}} \cdot
    \sqrt{\frac{2cN_i\emptail}{(N_i+d\alpha)^2}}
    \\
    & \qquad
    +
    \sqrt{\frac{2cN_i\wh{P}_{i,j}(1-\wh{P}_{i,j})\emptail}{(N_i+d\alpha)^2}}
    + \frac{(4/3)\emptail + \abs{\alpha -
    d\alpha\wh{P}_{i,j}}}{N_i+d\alpha}
    .
  \end{align*}
  Whenever $N_i > 0$, we can solve a quadratic inequality to conclude
  $\abs{\wh{P}_{i,j}-P_{i,j}} \leq \wh{B}_{i,j}$.
\end{proof}

\subsection{Empirical bounds for {$\vpi$}}

Recall that $\hat\vpi$ is obtained as the unique stationary
distribution for $\wh\vP$.
Let $\wh\vA := \vI - \wh\vP$, and let $\giAh$ be the \emph{group
inverse} of $\wh\vA$---i.e., the unique square matrix satisfying the
following equalities:
\[
  \wh\vA\giAh\wh\vA = \wh\vA , \quad
  \giAh\wh\vA\giAh = \giAh , \quad
  \giAh\wh\vA = \wh\vA\giAh .
\]
The matrix $\giAh$, which is well defined no matter what transition probability matrix $\wh\vP$ we start with
\parencite{meyer1975role},
 is a central quantity that captures many properties
of the ergodic Markov chain with transition matrix
$\wh\vP$ \parencite{meyer1975role}.
We denote the $(i,j)$-th entry of $\giAh$ by $\giAh_{i,j}$.
Define
\begin{equation}
  \hat\kappa :=
  \frac12
  \max
  \Braces{
    \giAh_{j,j}
    - \min\Braces{ \giAh_{i,j} : i \in [d] }
    : j \in [d]
  }
  .
  \notag
\end{equation}
Analogously define
\begin{align*}
  \vA & := \vI - \vP , \\
  \giA & := \text{group inverse of $\vA$} , \\
  \kappa & :=
  \frac12
  \max
  \Braces{
    \giA_{j,j}
    - \min\Braces{ \giA_{i,j} : i \in [d] }
    : j \in [d]
  }
  .
\end{align*}
We now use the following perturbation bound from \textcite[Section
3.3]{cho2001comparison} (derived
from~\textcites{haviv1984perturbation,kirkland1998applications}).
\begin{lemma}[\cites{haviv1984perturbation,kirkland1998applications}]
  \label{lem:pi-perturb}
  If
  $\abs{\wh{P}_{i,j} - P_{i,j}} \leq \wh{B}_{i,j}$ for each $(i,j) \in
  [d]^2$, then
  \begin{align}
    \max\Braces{
      \abs{\hat\pi_i - \pi_i}
      : i \in [d]
    }
    & \leq \min\{\kappa,\hat\kappa\} \max
    \braces{
      \wh{B}_{i,j}
      : (i,j) \in [d]^2
    }
    \notag \\
    & \leq \hat\kappa \max
    \braces{
      \wh{B}_{i,j}
      : (i,j) \in [d]^2
    }
    .
    \notag
  \end{align}
\end{lemma}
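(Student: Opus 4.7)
The plan is to derive the bound from a classical perturbation identity for stationary distributions that is expressed through the group inverse. Setting $\vE := \wh\vP - \vP$, I would start from the identity
\[
  \hat\vpi - \vpi \;=\; \hat\vpi \,\vE\, \giA
\]
(and symmetrically, $\hat\vpi - \vpi = \vpi\,\vE\,\giAh$). This identity follows from $\hat\vpi\wh\vP = \hat\vpi$, $\vpi\vP = \vpi$, and the basic property $\vA\giA = \vI - \v1\vpi$ of the group inverse, after a short manipulation that uses $\hat\vpi\v1 = 1$. Either form is valid, so the same bound will hold with either $\vA^\#$ or $\wh\vA^\#$, which is what produces the $\min\{\kappa,\hat\kappa\}$ factor.

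The next key observation is that the row sums of $\vE$ vanish, since both $\wh\vP$ and $\vP$ are stochastic: $\vE\v1 = \v0$. Consequently, if $\vc \in \bbR^d$ is any row vector of constants, then $\hat\vpi\,\vE\,(\giA - \v1\vc) = \hat\vpi\,\vE\,\giA$. Choosing for each column $j$ of $\giA$ the shift $c_j := \tfrac12\bigl(\giA_{j,j} + \min_i \giA_{i,j}\bigr)$ (or any affine shift centering that column), one replaces the entries $\giA_{i,j}$ in the product by entries bounded in absolute value by
\[
  \tfrac12\Bigl(\giA_{j,j} - \min_{i}\giA_{i,j}\Bigr)\,,
\]
which, by definition, is at most $\kappa$. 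Combined with $\hat\vpi \in \Delta^{d-1}$ and entrywise $|E_{i,j}| \le \wh B_{i,j} \le \max_{i,j}\wh B_{i,j}$, this yields
\[
  |\hat\pi_j - \pi_j| \;\le\; \kappa \max_{(i,j)}\wh B_{i,j}\,,
\]
and repeating the argument with the roles of $\vP$ and $\wh\vP$ swapped (using the second form of the identity) gives the same bound with $\hat\kappa$ in place of $\kappa$.

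The only delicate point is verifying the perturbation identity itself; everything else is centering and an elementary matrix--vector inequality. I would either cite \textcite{meyer1975role,cho2001comparison} for this identity or derive it by pre-multiplying $\wh\vA\giA = \vI - \v1\vpi$ by $\hat\vpi$ and substituting $\wh\vA = \vA - \vE$, which after using $\hat\vpi\wh\vA = \v0$ and $\hat\vpi\v1 = 1$ rearranges to the desired form. Beyond that, the only subtlety is to note that the column-centering freedom is precisely what allows the half-range $\tfrac12(\giA_{j,j}-\min_i\giA_{i,j})$ --- rather than a full sup-norm of the column --- to appear, since any constant shift leaves $\vE\,(\text{column})$ unchanged. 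The outer $\min\{\kappa,\hat\kappa\}$ then comes from applying whichever of the two identities is tighter.
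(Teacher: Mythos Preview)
Your approach via the group-inverse perturbation identity and the column-centering trick is exactly the standard route (and is essentially what the cited references do; the paper itself does not prove this lemma but simply quotes it). The identity $\hat\vpi - \vpi = \hat\vpi\,\vE\,\giA$ and the use of $\vE\v1 = \v0$ to center each column of $\giA$ are both correct, and together with Meyer's fact that $\giA_{j,j} = \max_i \giA_{i,j}$ the centered column entries are indeed bounded in absolute value by $\tfrac12(\giA_{j,j}-\min_i\giA_{i,j}) \le \kappa$.

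However, the final inequality does not follow. After centering you obtain
\[
  |\hat\pi_j - \pi_j|
  \;\le\; \kappa \sum_{k} \hat\pi_k \sum_{l} |E_{k,l}|
  \;\le\; \kappa \,\max_k \sum_{l} |E_{k,l}|
  \;=\; \kappa\,\|\vE\|_\infty ,
\]
where $\|\vE\|_\infty$ is the maximum absolute row sum; replacing each $|E_{k,l}|$ by $\max_{i,j}\wh B_{i,j}$ only gives $\kappa\,d\cdot\max_{i,j}\wh B_{i,j}$, not $\kappa\,\max_{i,j}\wh B_{i,j}$. The missing factor is real: take $d=3$, $\vP$ with every entry $1/3$ (so $\giA = \vI - \v1\vpi$ and $\kappa = 1/2$), and let every row of $\wh\vP$ be $(1/3+\veps,\,1/3-\veps,\,1/3)$. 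Then $\hat\vpi = (1/3+\veps,\,1/3-\veps,\,1/3)$, so $|\hat\pi_1-\pi_1| = \veps$, whereas $\kappa\,\max_{i,j}|E_{i,j}| = \veps/2$; here $\hat\kappa = 1/2$ as well, so taking $\min\{\kappa,\hat\kappa\}$ does not help. In other words, your argument correctly establishes the classical Haviv--Van der Heyden/Kirkland bound $\min\{\kappa,\hat\kappa\}\,\|\vE\|_\infty$, which is what the cited references actually prove, but the lemma's displayed inequality with the entrywise maximum $\max_{i,j}\wh B_{i,j}$ does not follow from this and is in fact false as stated.
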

This establishes the validity of the confidence intervals for the
$\pi_i$ in the same event from \cref{lem:P-unobs-bound}.

We now establish the validity of the bounds for the ratio quantities
$\sqrt{\hat\pi_i/\pi_i}$ and $\sqrt{\pi_i/\hat\pi_i}$.
\begin{lemma}
  \label{lem:pi-ratio-perturb}
  If $\max\braces{\abs{\hat\pi_i-\pi_i} : i \in [d]} \leq \hat{b}$,
  then
  \begin{equation}
    \max\bigcup_{i\in[d]}
    \braces{
      \abs{\sqrt{\pi_i/\hat\pi_i}-1},\,
      \abs{\sqrt{\hat\pi_i/\pi_i}-1}
    }
    \leq
    \frac12 \max \bigcup_{i\in[d]}
    \Braces{
      \frac{\hat{b}}{\hat\pi_i},\,
      \frac{\hat{b}}{\brackets{\hat\pi_i-\hat{b}}_+}
    }
    .
    \notag
  \end{equation}
\end{lemma}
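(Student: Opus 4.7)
The plan is to reduce to the following pointwise statement: for each $i \in [d]$,
\[
\max\braces{\abs{\sqrt{\pi_i/\hat\pi_i} - 1},\, \abs{\sqrt{\hat\pi_i/\pi_i} - 1}} \;\le\; \frac{1}{2}\cdot\frac{\hat{b}}{\brackets{\hat\pi_i-\hat{b}}_+}.
\]
Maximizing over $i$ yields the lemma, because $\brackets{\hat\pi_i-\hat{b}}_+ \le \hat\pi_i$ implies $\hat{b}/\brackets{\hat\pi_i-\hat{b}}_+ \ge \hat{b}/\hat\pi_i$, so this pointwise bound is already at least as large as either of the two fractions inside the right-hand side $\max$ of the lemma.

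Fix $i$. If $\hat\pi_i \le \hat{b}$, the right-hand side above is $+\infty$ and there is nothing to prove. Otherwise the hypothesis $\abs{\hat\pi_i - \pi_i} \le \hat{b}$ gives $\pi_i \ge \hat\pi_i - \hat{b} > 0$, so the ratio $r := \pi_i/\hat\pi_i$ and its reciprocal are both positive and finite. The key calculus fact I will use is that for every $y > 0$,
\[
\abs{\sqrt{y} - 1} \;\le\; \frac{1}{2}\Parens{\max\braces{y,\,1/y}-1}.
\]
For $y \ge 1$ this follows from $\sqrt{y}-1 = (y-1)/(\sqrt{y}+1)$ together with $\sqrt{y}+1 \ge 2$. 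For $0 < y \le 1$, I rewrite $1-\sqrt{y} = \sqrt{y}\parens{1/\sqrt{y}-1} = \sqrt{y}\cdot(1/y-1)/(1/\sqrt{y}+1)$ and then bound $\sqrt{y} \le 1$ and $1/\sqrt{y}+1 \ge 2$.

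Applying this inequality once with $y = r$ and once with $y = 1/r$ (both produce the same bound, since $\max\braces{y,1/y}$ is invariant under $y \mapsto 1/y$) and then taking the maximum gives
\[
\max\braces{\abs{\sqrt{r}-1},\,\abs{\sqrt{1/r}-1}} \;\le\; \frac{1}{2}\Parens{\max\braces{r,1/r}-1} \;=\; \frac{1}{2}\cdot\frac{\abs{\pi_i-\hat\pi_i}}{\min\braces{\pi_i,\hat\pi_i}} \;\le\; \frac{1}{2}\cdot\frac{\hat{b}}{\hat\pi_i-\hat{b}},
\]
where the last step uses $\min\braces{\pi_i,\hat\pi_i} \ge \hat\pi_i - \hat{b}$. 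The only step needing any care is the $y \le 1$ case of the key inequality: the direct identity $1-\sqrt{y} = (1-y)/(1+\sqrt{y})$ yields only the factor $1$ rather than $1/2$, and the factor $1/2$ is recovered by passing through the reciprocal as done above. Everything else is routine bookkeeping.
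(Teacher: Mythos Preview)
Your proof is correct and follows essentially the same approach as the paper: both rely on the elementary inequality $\max\{|\sqrt{x}-1|,|\sqrt{1/x}-1|\}\le\tfrac12\max\{|x-1|,|1/x-1|\}$ (your formulation $|\sqrt{y}-1|\le\tfrac12(\max\{y,1/y\}-1)$ is equivalent) applied to $x=\pi_i/\hat\pi_i$, together with $|\hat\pi_i-\pi_i|\le\hat b$ and $\pi_i\ge\hat\pi_i-\hat b$. Your version is slightly more explicit in justifying the key inequality and goes directly to the dominant term $\hat b/[\hat\pi_i-\hat b]_+$, but the substance is identical.
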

\begin{proof}
  By \cref{lem:pi-perturb}, we have for each $i \in [d]$,
  \begin{equation}
    \frac{\abs{\hat\pi_i - \pi_i}}{\hat\pi_i}
    \leq \frac{\hat{b}}{\hat\pi_i}
    , \quad
    \frac{\abs{\hat\pi_i - \pi_i}}{\pi_i}
    \leq
    \frac{\hat{b}}{\pi_i}
    \leq
    \frac{\hat{b}}{\brackets{\hat{\pi}_i-\hat{b}}_+}
    .
    \notag
  \end{equation}
  Therefore, using the fact that for any $x>0$,
  \[
    \max\Braces{
      |\sqrt{x}-1| ,\, |\sqrt{1/x}-1|
    } \leq \frac12 \max\Braces{ |x-1| ,\, |1/x-1| }
  \]
  we have for every $i \in [d]$,
  \begin{align*}
    \max\Braces{
      \abs{\sqrt{\pi_i/\hat\pi_i}-1}
      ,\,
      \abs{\sqrt{\hat\pi_i/\pi_i}-1}
    }
    & \leq
    \frac12 \max\Braces{
      \abs{\pi_i/\hat\pi_i-1}
      ,\,
      \abs{\hat\pi_i/\pi_i-1}
    }
    \\
    & \leq
    \frac12 \max\Braces{
      \frac{\hat{b}}{\hat\pi_i}
      ,\,
      \frac{\hat{b}}{\brackets{\hat{\pi}_i-\hat{b}}_+}
    }
    .
    \qedhere
  \end{align*}
\end{proof}

\subsection{Empirical bounds for {$\vL$}}

By Weyl's inequality and the triangle inequality,
\[
  \max_{i \in [d]} |\lambda_i - \hat\lambda_i|
  \leq \norm{\vL - \Sym(\wh\vL)}
  \leq \norm{\vL - \wh\vL} .
\]
It is easy to show that $|\hatgap - \gap|$ is bounded by the same
quantity.
Therefore, it remains to establish an empirical bound on $\norm{\vL -
\wh\vL}$.

\begin{lemma}
  \label{lem:L-obs-bound}
  If
  $\abs{\wh{P}_{i,j} - P_{i,j}} \leq \wh{B}_{i,j}$ for each $(i,j) \in
  [d]^2$ and
  $\max\braces{\abs{\hat\pi_i-\pi_i} : i \in [d]} \leq \hat{b}$,
  then
  \[
    \norm{\wh\vL - \vL}
    \leq
    2\hat\rho + \hat\rho^2
    + (1+2\hat\rho+\hat\rho^2)
    \Biggl(
      \sum_{(i,j)\in[d]^2} \frac{\hat\pi_i}{\hat\pi_j} \hat{B}_{i,j}^2
    \Biggr)^{1/2}
    ,
  \]
  where
  \[
    \hat\rho := \frac12 \max \bigcup_{i\in[d]}
    \Braces{
      \frac{\hat{b}}{\hat\pi_i},\,
      \frac{\hat{b}}{\brackets{\hat\pi_i-\hat{b}}_+}
    }
    .
  \]
\end{lemma}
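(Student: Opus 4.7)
The plan is to split $\wh\vL - \vL$ additively into a ``transition-matrix'' part (capturing the error in estimating $\vP$) and a ``stationary-distribution'' part (capturing the error in estimating $\vpi$), then bound each by a quantity depending only on the observable $\hat\vpi$ and $\wh{B}_{i,j}$. Writing $\vD := \Diag(\vpi)^{1/2}$ and $\wh\vD := \Diag(\hat\vpi)^{1/2}$, so that $\vL = \vD\vP\vD^{-1}$ and $\wh\vL = \wh\vD\wh\vP\wh\vD^{-1}$, I would insert the intermediate matrix $\wh\vD\vP\wh\vD^{-1}$ to obtain
\[
\wh\vL - \vL = \underbrace{\wh\vD(\wh\vP - \vP)\wh\vD^{-1}}_{T_1} + \underbrace{\bigl(\wh\vD\vP\wh\vD^{-1} - \vL\bigr)}_{T_2}.
\]

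For $T_1$, the plan is to bound the spectral norm by the Frobenius norm. Since the $(i,j)$ entry of $T_1$ is $\sqrt{\hat\pi_i/\hat\pi_j}(\wh{P}_{i,j} - P_{i,j})$, the hypothesis $|\wh{P}_{i,j} - P_{i,j}| \le \wh{B}_{i,j}$ gives directly
\[
\norm{T_1} \;\le\; \norm{T_1}_F \;\le\; \Biggl(\sum_{(i,j)\in[d]^2}\frac{\hat\pi_i}{\hat\pi_j}\wh{B}_{i,j}^2\Biggr)^{1/2}.
\]
The key point here is that choosing $\wh\vD$ (rather than $\vD$) on both sides of $\wh\vP - \vP$ is what makes the final bound depend only on $\hat\vpi$, not on the unknown $\vpi$.

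For $T_2$, I would factor out the stationary-distribution perturbation by introducing the diagonal matrices $\vE_1 := \wh\vD\vD^{-1} - \vI$ and $\vE_2 := \vD\wh\vD^{-1} - \vI$, whose $(i,i)$ entries equal $\sqrt{\hat\pi_i/\pi_i}-1$ and $\sqrt{\pi_i/\hat\pi_i}-1$, respectively. By \cref{lem:pi-ratio-perturb} applied with the given $\hat{b}$, both $\norm{\vE_1}$ and $\norm{\vE_2}$ are at most $\hat\rho$. Using the identities $\wh\vD = (\vI + \vE_1)\vD$ and $\wh\vD^{-1} = \vD^{-1}(\vI + \vE_2)$, substitution gives
\[
T_2 = (\vI + \vE_1)\vL(\vI + \vE_2) - \vL = \vE_1\vL + \vL\vE_2 + \vE_1\vL\vE_2,
\]
and then $\norm{\vL}\le 1$ (eigenvalues of $\vL$ lie in $(-1,1]$) combined with the triangle inequality yields $\norm{T_2}\le 2\hat\rho + \hat\rho^2$. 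Adding the bounds on $T_1$ and $T_2$ immediately gives the conclusion; the multiplier $(1+2\hat\rho+\hat\rho^2) = (1+\hat\rho)^2$ in front of the sum appears as slack relative to this direct argument. The only real subtlety to navigate is making sure the decomposition is performed so that both pieces end up written in terms of observable quantities; once the intermediate $\wh\vD\vP\wh\vD^{-1}$ is chosen, the rest is bookkeeping with $\norm{\vL}\le 1$ and the Frobenius bound.
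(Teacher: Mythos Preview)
Your proof is correct and in fact yields a slightly sharper inequality than the lemma states (without the factor $(1+\hat\rho)^2$ in front of the Frobenius sum), so the stated bound follows \emph{a fortiori}.

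The paper's argument is structurally the same but pivots differently. It writes
\[
  \vL = (\vI+\errpil)(\wh\vL + \errP)(\vI+\errpir),
\]
with $\errP = \wh\vD(\vP-\wh\vP)\wh\vD^{-1}$, $\errpil = \vD\wh\vD^{-1}-\vI$, $\errpir = \wh\vD\vD^{-1}-\vI$, and then expands into seven terms. This produces the cross terms $\errpil\errP$, $\errP\errpir$, $\errpil\errP\errpir$, which is where the $(1+2\hat\rho+\hat\rho^2)$ multiplier on the Frobenius piece comes from; it also requires $\|\wh\vL\|\le 1$, which holds because any stochastic matrix is an $L^2(\hat\vpi)$-contraction. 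Your choice of the intermediate $\wh\vD\vP\wh\vD^{-1}$ isolates the $\wh\vP-\vP$ error in a single term and lets you expand the remaining piece around $\vL$ itself, so you only need the immediate $\|\vL\|\le1$ (from symmetry of $\vL$) and you avoid the cross terms altogether. Both routes use the same ingredients (diagonal error matrices, Frobenius bound on the transition-matrix part, \cref{lem:pi-ratio-perturb}); yours is a cleaner packaging that happens to save a harmless factor.
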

\begin{proof}
  We use the following decomposition of $\vL - \wh\vL$:
  \[
    \vL - \wh\vL
    = \errP
    + \errpil \wh\vL
    + \wh\vL \errpir
    + \errpil \errP
    + \errP \errpir
    + \errpil \wh\vL \errpir
    + \errpil \errP \errpir
  \]
  where
  \begin{align*}
    \errP
    & := \Diag(\hat\vpi)^{1/2} \parens{ \vP - \wh\vP } \Diag(\hat\vpi)^{-1/2} ,
    \\
    \errpil
    & := \Diag(\vpi)^{1/2} \Diag(\hat\vpi)^{-1/2} - \vI ,
    \\
    \errpir
    & := \Diag(\hat\vpi)^{1/2} \Diag(\vpi)^{-1/2} - \vI .
  \end{align*}
  Therefore
  \begin{align*}
    \norm{\vL - \wh\vL}
    & \leq
    \norm{\errpil} + \norm{\errpir} + \norm{\errpil} \norm{\errpir}
    \\
    & \quad
    + \Parens{
      1 + \norm{\errpil} + \norm{\errpir} + \norm{\errpil} \norm{\errpir}
    }
    \norm{\errP}
    .
  \end{align*}
  Observe that for each $(i,j) \in [d]^2$, the $(i,j$)-th entry of
  $\errP$ is bounded in absolute value by
  \[
    |(\errP)_{i,j}|
    = \hat{\pi}_i^{1/2} \hat{\pi}_j^{-1/2} |P_{i,j} - \wh{P}_{i,j}|
    \leq
    \hat\pi_i^{1/2} \hat{\pi}_j^{-1/2} \wh{B}_{i,j}
    .
  \]
  Since the spectral norm of $\errP$ is bounded above by its Frobenius
  norm,
  \[
    \norm{\errP}
    \leq \Biggl(
      \sum_{(i,j) \in [d]^2}
      (\errP)_{i,j}^2
    \Biggr)^{1/2}
    \leq \Biggl(
      \sum_{(i,j) \in [d]^2}
      \frac{\pi_i}{\pi_j} \wh{B}_{i,j}^2
    \Biggr)^{1/2}
    .
  \]
  Finally, the spectral norms of $\errpil$ and $\errpir$ satisfy
  \[
    \max\Braces{ \norm{\errpil} ,\, \norm{\errpir} }
    =
    \max\bigcup_{i\in[d]}
    \braces{
      \abs{\sqrt{\pi_i/\hat\pi_i}-1},\,
      \abs{\sqrt{\hat\pi_i/\pi_i}-1}
    }
    ,
  \]
  which can be bounded using \cref{lem:pi-ratio-perturb}.
\end{proof}

This establishes the validity of the confidence interval for $\gap$ in
the same event from \cref{lem:P-unobs-bound}.

\subsection{Asymptotic widths of intervals}
\label{sec:asymptotic}

Let us now turn to the asymptotic behavior of the interval widths
(regarding $\hat{b}$, $\hat\rho$, and $\hat{w}$ all as
functions of $n$).

A simple calculation gives that, almost surely, as $n\to \infty$,
\begin{align*}
  \sqrt{\frac{n}{\log\log n}}
  \hat{b}
  & =
  O\Parens{
    \max_{i,j} \kappa \sqrt{\frac{P_{i,j}}{\pi_i}}
  }
  ,
  \\
  \sqrt{\frac{n}{\log\log n}}
  \hat\rho
  & =
  O\Parens{
    \frac{\kappa}{\pimin^{3/2}}
  }
  .
\end{align*}
Here, we use the fact that $\hat\kappa \to \kappa$ as $n\to\infty$
since $\giAh \to \giA$ as $\wh\vP \to
\vP$ \parencites{li2001improvement,benitez2012continuity}.

Further, since
\begin{align*}
  \sqrt{\frac{n}{\log\log n}}
  \Biggl(
    \sum_{i,j} \frac{\hat{\pi}_i}{\hat{\pi}_j} \wh{B}_{i,j}^2
  \Biggr)^{1/2}
  & =
  O\Parens{
    \Biggl(
      \sum_{i,j} \frac{{\pi}_i}{{\pi}_j}
      \cdot \frac{P_{i,j}(1-P_{i,j})}{\pi_i}
    \Biggr)^{1/2}
  }
  =
  O\Parens{ \sqrt{ \frac{d}{\pimin}} }
  \,,
\end{align*}
we thus have
\begin{align*}
  \sqrt{\frac{n}{\log\log n}}
  \hat{w}
  =
  O\Parens{
    \frac{\kappa}{\pimin^{3/2}} +
    \sqrt{\frac{d}{\pimin}}
  }
  .
\end{align*}
This completes the proof of \cref{thm:empirical}.
\hfill $\qed$

The following \lcnamecref{claim:kappa-bound}
 provides a bound on $\kappa$ in terms of the
number of states and the spectral gap.
\begin{lemma}
  \label{claim:kappa-bound}
  $\kappa \leq \frac{1}{\gap} \min\{d, 8 + \ln(4/\pi_\star)\}$
\end{lemma}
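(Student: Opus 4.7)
The plan is to convert $\kappa$ into a statement about expected hitting times via the classical identity of \textcite{meyer1975role}: for $i \neq j$,
\[
  \giA_{j,j} - \giA_{i,j} = \pi_j\, \bbE_i[\tau_j]\,,
\]
where $\tau_j := \min\{t \geq 1 : X_t = j\}$ is the first passage time to state $j$. Since both sides vanish at $i = j$, this yields $\kappa = \tfrac{1}{2} \max_{i,j \in [d]} \pi_j\, \bbE_i[\tau_j]$, and reduces the claim to two separate upper bounds on the rescaled hitting time $\pi_j\, \bbE_i[\tau_j]$.

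For the bound $\kappa \leq d/\gap$, I will combine the above identity with the observation $\giA \v{1} = \v{0}$, which follows instantly from the series representation $\giA = \sum_{t \geq 0}(\vP^t - \v{1}\vpi)$ together with $\vP^t \v{1} = \v{1}$ and $\vpi \v{1} = 1$. Summing over $j$ and using that $\giA$ has eigenvalue $0$ on $\v{1}$ and eigenvalues $\{1/(1-\lambda_k)\}_{k\geq 2}$ on the orthogonal complement,
\[
  \sum_{j=1}^{d} \pi_j\, \bbE_i[\tau_j] = \tr(\giA) - \sum_{j=1}^d \giA_{i,j} = \sum_{k=2}^d \frac{1}{1-\lambda_k} \leq \frac{d-1}{\gap}\,.
\]
Every summand on the left is non-negative, so each individual term $\pi_j\, \bbE_i[\tau_j]$ is bounded by this sum, giving $\kappa \leq (d-1)/(2\gap) \leq d/\gap$.

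For the bound $\kappa \leq (8 + \ln(4/\pimin))/\gap$, I will use a standard exponential-tail estimate for hitting times. Reversibility combined with the spectral decomposition of the symmetric matrix $\vL$ (and a Cauchy--Schwarz step in the $\vpi$-weighted inner product) yields the pointwise bound
\[
  |P^t(i,j) - \pi_j| \leq (1-\gap)^t \sqrt{\pi_j/\pi_i}\,.
\]
Setting $T_0 := \lceil \gap^{-1} \ln(2/\sqrt{\pi_i \pi_j}) \rceil$ makes this at most $\pi_j/2$, so $P^{T_0}(i,j) \geq \pi_j/2$ and thus $\Pr_i(\tau_j \leq T_0) \geq \pi_j/2$. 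Iterating via the strong Markov property gives $\Pr_i(\tau_j > k T_0) \leq (1 - \pi_j/2)^k$, and summing yields $\bbE_i[\tau_j] \leq 2T_0/\pi_j$. Therefore
\[
  \pi_j\, \bbE_i[\tau_j] \leq 2 T_0 \leq \frac{2 \ln(2/\sqrt{\pi_i \pi_j})}{\gap} + 2 \leq \frac{2 \ln(2/\pimin)}{\gap} + 2\,,
\]
and halving together with $\gap \leq 1$ packs the constants into the form $(8 + \ln(4/\pimin))/\gap$.

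The argument is mostly assembly. The main technical point is the pointwise spectral bound on $|P^t(i,j) - \pi_j|$, which rests essentially on reversibility and is what keeps $T_0$ only logarithmic in $1/\pimin$ rather than polynomial; everything else is elementary arithmetic to fit the constants.
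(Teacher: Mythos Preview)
Your argument is essentially correct and takes a genuinely different route from the paper on both halves of the minimum.

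For the bound $\kappa \leq d/\gap$, the paper simply quotes the chain of inequalities $\kappa \leq \max_{i,j}|\giA_{i,j}| \leq \sup_{\norm{\vv}_1=1,\,\dotp{\vv,\v1}=0}\norm{\vv^\t\giA}_1 \leq d/\gap$ from \textcite{cho2001comparison} and \textcite{seneta1993sensitivity}. Your route via the eigentime identity $\sum_{j\neq i}\pi_j\,\bbE_i[\tau_j]=\tr(\giA)=\sum_{k\geq 2}(1-\lambda_k)^{-1}$ is more self-contained and in fact gives the sharper constant $(d-1)/(2\gap)$. For the bound $\kappa \leq (8+\ln(4/\pimin))/\gap$, both proofs pass through the hitting-time representation $\kappa=\tfrac12\max_{j}\max_{i\neq j}\pi_j\,\bbE_i[\tau_j]$, but the paper bounds $\bbE_i[\tau_j]$ via \cref{lem:hitrel}, which in turn appeals to Theorem~12.19 of \textcite{LePeWi08} and the relation $\tmix\leq\trelax\ln(4/\pimin)$, whereas you derive the exponential tail directly from the pointwise spectral estimate $|P^t(i,j)-\pi_j|\leq(1-\gap)^t\sqrt{\pi_j/\pi_i}$. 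Your argument is more elementary and avoids the black-box reference.

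Two small slips to clean up. First, with your definition $\tau_j=\min\{t\geq 1:X_t=j\}$, the quantity $\pi_j\,\bbE_j[\tau_j]$ equals $1$, not $0$, so Meyer's identity does not extend to $i=j$ as you claim; this is harmless because $\kappa$ is attained at some $i\neq j$ and your trace sum is over $\giA_{j,j}-\giA_{i,j}$, which does vanish on the diagonal. Second, your $T_0=\lceil\gap^{-1}\ln(2/\sqrt{\pi_i\pi_j})\rceil$ depends on the \emph{initial} state $i$, but the Markov-property iteration needs $\Pr_\ell(\tau_j\leq T_0)\geq\pi_j/2$ uniformly over all states $\ell$ reached along the way; as written this can fail when $\pi_\ell<\pi_i$. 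Replacing $\sqrt{\pi_i\pi_j}$ by $\pimin$ in the definition of $T_0$ (which is what your final inequality uses anyway) repairs this immediately.
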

Before proving this, we prove a lemma of independent interest.
\begin{lemma} \label{lem:hitrel}
  Let $\tau_j$ be the first positive time that state $j$ is
  visited by the Markov chain.  Then
  \begin{equation} \label{eq:hitrelax}
    {\mathbb E}_i \tau_j
    \leq 2\left( \tmix + 8 \frac{\trelax}{\pi_j} \right) \,.
  \end{equation}
\end{lemma}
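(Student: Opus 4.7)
The plan is to decompose $\mathbb{E}_i \tau_j$ into a ``mixing phase'' of length $\tmix$, after which the chain is approximately stationary, plus an ``approximately stationary hitting phase.'' Setting $h := \max_i \mathbb{E}_i \tau_j$, I would first show that $h \leq 2(\tmix + \mathbb{E}_\vpi \tau_j)$, and then bound $\mathbb{E}_\vpi \tau_j \leq 8\trelax/\pi_j$ to conclude.

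For the first (reduction) step, fix any starting state $i$ and apply the strong Markov property at time $\tmix$. Writing $\mu := \ve_i \vP^{\tmix}$, the strong Markov property gives
\[
  \mathbb{E}_i \tau_j \;\leq\; \tmix + \mathbb{E}_\mu \tau_j \;=\; \tmix + \mathbb{E}_\vpi \tau_j + \sum_k (\mu_k - \pi_k)\, \mathbb{E}_k \tau_j.
\]
The remainder sum is bounded in absolute value by $h \cdot \sum_k |\mu_k - \pi_k| = 2\|\mu - \vpi\|_{\tv} \cdot h \leq h/2$, using the defining $1/4$-bound on total variation at time $\tmix$. Taking the supremum over $i$ and rearranging yields $h \leq 2(\tmix + \mathbb{E}_\vpi \tau_j)$.

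For the second step, I would run a second-moment argument. Let $N := \sum_{s=1}^{L} \mathds{1}\{X_s = j\}$ denote the number of visits to $j$ in the first $L$ steps of the stationary chain, so $\mathbb{E}_\vpi N = L \pi_j$. The standard variance bound for additive functionals of a reversible chain (derived via the spectral decomposition of $\vP$ in $L^2(\vpi)$, together with $\Var_\vpi(\mathds{1}_{\cdot=j}) \leq \pi_j$) yields $\Var_\vpi(N) \leq 2 L \pi_j \trelax$. Choosing $L = \lceil 4\trelax/\pi_j \rceil$, the Paley--Zygmund inequality gives
\[
  \Pr_\vpi(\tau_j \leq L) \;=\; \Pr_\vpi(N \geq 1) \;\geq\; \frac{(L\pi_j)^2}{(L\pi_j)^2 + \Var_\vpi(N)} \;\geq\; \frac{L\pi_j}{L\pi_j + 2\trelax} \;\geq\; \frac{2}{3}.
\]
To upgrade this to an expectation bound, iterate the strong Markov property: $\Pr_\vpi(\tau_j > (s+t))$ can be pushed inside as $\mathbb{E}_\vpi[\mathds{1}_{\tau_j > s}\Pr_{X_s}(\tau_j > t)]$, and combining this iteratively with the tail bound (after one additional mixing interval to bring the chain back close to $\vpi$) yields geometric decay of $\Pr_\vpi(\tau_j > kL)$, hence an expectation bound of the form $\mathbb{E}_\vpi \tau_j \leq 8\trelax/\pi_j$ once the constants are tuned.

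The main obstacle is the passage from the tail bound $\Pr_\vpi(\tau_j \leq L) \geq 2/3$ to an explicit expectation bound with the small constant $8$: naive submultiplicativity requires controlling $\sup_m \Pr_m(\tau_j > L)$, not just the stationary version, which in turn forces one either to enlarge $L$ by a mixing factor between blocks or to switch to a more refined spectral argument bounding $\mathbb{E}_\vpi \tau_j$ directly through the fundamental matrix $(\vI - \vQ)^{-1}$ with $\vQ$ the restriction of $\vP$ to $[d]\setminus\{j\}$. In either route, reversibility is essential, both for the variance estimate underlying Paley--Zygmund and for the spectral control of the restricted resolvent.
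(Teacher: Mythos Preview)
Your reduction step---bounding $h:=\max_i \mathbb{E}_i\tau_j$ by $2(\tmix+\mathbb{E}_\vpi\tau_j)$ via the $1/4$ total-variation bound at time $\tmix$---is correct and clean. The gap is in the second step, and you identify it yourself: the Paley--Zygmund argument gives only the \emph{stationary} tail bound $\Pr_\vpi(\tau_j>L)\le 1/3$ for $L\approx 4\trelax/\pi_j$, and you do not actually close the passage to $\mathbb{E}_\vpi\tau_j\le 8\trelax/\pi_j$. Your route (a)---inserting a mixing window between blocks so that submultiplicativity applies from an arbitrary state---reintroduces $\tmix$ into the block length and pushes the constant well past $8$ (indeed the resulting bound on $\mathbb{E}_\vpi\tau_j$ would itself contain a $\tmix$ term, changing the shape of the final inequality). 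Your route (b), computing $\mathbb{E}_\vpi\tau_j$ directly from the fundamental matrix, is viable and would in fact give a constant better than $8$, but you do not carry it out; and once you invoke the spectral representation of $(\vI-\vP+\v1\vpi^\t)^{-1}$ the second-moment detour becomes irrelevant.

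The paper's proof avoids this difficulty by quoting a result that already delivers a \emph{uniform} tail bound: Theorem~12.19 of \textcite{LePeWi08} (applied with $f=\mathds{1}_j$) gives $\Pr_i(\tau_j>t)\le 1/2$ for \emph{every} starting state $i$, once $t=\tmix+8\trelax/\pi_j$. With uniformity in $i$, the Markov property yields $\Pr_i(\tau_j>kt)\le 2^{-k}$ directly, and summing gives $\mathbb{E}_i\tau_j\le 2t$. The whole proof is two lines; the substance of your second step is exactly what that theorem packages, and the uniformity over starting states is precisely the missing ingredient in your version.
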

\begin{proof}
  By taking
  $f$ to be the indicator of state $j$ in Theorem 12.19
  of \textcite{LePeWi08}, for any $i$,
  if $t = \tmix + 8 \trelax/\pi_j$, then
  \[
    {\rm Pr}_i( \tau_j > t )
    \leq \frac{1}{2} \,.
  \]
  Thus, ${\rm Pr}_i( \tau_j > tk) \leq 2^{-k}$, whence
  \cref{eq:hitrelax} follows.
\end{proof}
\begin{proof}[Proof of \cref{claim:kappa-bound}]
  It is established by \textcite{cho2001comparison} that
  \[
    \kappa
    \leq \max_{i,j} |\giA_{i,j}|
    \leq
    \sup_{\norm{\vv}_1 = 1 , \dotp{\vv,\v1} = 0} \norm{\vv^\t \giA}_1
  \]
  (our $\kappa$ is the $\kappa_4$ quantity from
  \textcite{cho2001comparison}), and \textcite{seneta1993sensitivity}
  establishes
  \[
    \sup_{\norm{\vv}_1 = 1 , \dotp{\vv,\v1} = 0} \norm{\vv^\t \giA}_1
    \leq
    \frac{d}{\gap}
    .
  \]
  
Since it is shown in \textcite{cho2001comparison} that
\[
\kappa = \frac{1}{2}\max_j\left[\max_{i \neq j} {\mathbb
    E}_i(\tau_j)\right] \pi_j \,,
\]
it follows from \cref{lem:hitrel} that
\[
\kappa \leq \tmix + 8 \trelax
\leq \trelax(8 + \ln(4/\pi_\star) ) \,.
\qedhere
\]
\end{proof}

  \section{Proof of \cref{thm:combined}}\label{sec:proof-combined}
\newcommand\LB{\ensuremath{\operatorname{lb}}}
\newcommand\piminlb{\ensuremath{\hat\pi_{\star,\LB}}}
\newcommand\gaplb{\ensuremath{\hat\gamma_{\star,\LB}}}
Let $\piminlb$ and $\gaplb$ be the lower bounds on $\pimin$ and
$\gap$, respectively, computed from \cref{alg:empest}.
Let $\hatpimin$ and $\hatgap$ be the estimates of $\pimin$ and $\gap$
computed using the estimators from \cref{thm:err}.
By a union bound, we have by \cref{thm:err,thm:empirical}
that with probability at least $1-2\delta$,
\begin{equation}
  \Abs{\hatpimin-\pimin}
  \le
  C \,
  \Parens{
    \sqrt{\frac{\pimin\log\frac{d}{\piminlb\delta}}{\gaplb n}}
    +
    \frac{\log\frac{d}{\piminlb\delta}}{\gaplb n}
  }
  \label{eq:pimin-plugin}
\end{equation}
and
\begin{equation}
  \Abs{\hatgap-\gap}
  \leq
  C \,
  \Parens{
    \sqrt{\frac{\log\frac{d}{\delta}\cdot\log\frac{n}{\piminlb\delta}}{\piminlb\gaplb n}}
    +
    \frac{\log\frac{d}{\delta}\cdot\log\frac{n}{\piminlb\delta}}{\piminlb\gaplb n}  
    + \frac{\log\frac{1}{\gaplb}}{\gaplb n}  
  }
  \,.
  \label{eq:gap-plugin}
\end{equation}
The bound on $\abs{\hatgap-\gap}$ in \cref{eq:gap-plugin}---call it
$\hat{w}'$---is fully observable and hence yields a confidence
interval for $\gap$.
The bound on $\abs{\hatpimin-\pimin}$ in \cref{eq:pimin-plugin}
depends on $\pimin$, but from it one can derive
\[
  \Abs{\hatpimin-\pimin}
  \le
  C' \,
  \Parens{
    \sqrt{\frac{\hatpimin\log\frac{d}{\piminlb\delta}}{\gaplb n}}
    +
    \frac{\log\frac{d}{\piminlb\delta}}{\gaplb n}
  }
\]
using the approach from the proof of \cref{lem:P-obs-bound}.
Here, $C'>0$ is an absolute constant that depends only on $C$.
This bound---call it $\hat{b}'$---is now also fully observable.
We have established that in the $1-2\delta$ probability event from
above,
\[
  \pimin \in \wh{U} := [\hatpimin-\hat{b}', \hatpimin+\hat{b}']
  , \quad
  \gap \in \wh{V} := [\hatgap-\hat{w}', \hatgap+\hat{w}']
  .
\]
It is easy to see that almost surely (as $n \to \infty$),
\[
  \sqrt{\frac{n}{\log n}} \hat{w}'
  = O\Parens{
    \sqrt{\frac{\log(d/\delta)}{\pimin\gap}}
  }
\]
and
\[
  \sqrt{n} \hat{b}'
  = O\Parens{
    \sqrt{\frac{\pimin \log\frac{d}{\pimin\delta}}{\gap}}
  }
  .
\]
This completes the proof of \cref{thm:combined}.
\hfill\qed

  \section{Discussion}\label{sec:discussion}
  The construction used in \cref{thm:combined} applies more generally:
Given a confidence interval of the form $I_n = I_n(\gap,\pimin,\delta)$ 
for some confidence level $\delta$
and a confidence set $E_n(\delta)$ for $(\gap,\pimin)$  for the same level,
$I_n' = E_n(\delta) \cap \cup_{(\gamma,\pi)\in E_n(\delta)} I_n(\gamma,\pi,\delta)$ is a valid
$2\delta$-level confidence interval whose asymptotic width
matches that of $I_n$ up to lower order terms under reasonable assumptions on $E_n$ and $I_n$.
In particular, this suggests that future work should focus on closing the gap
between the lower and upper bounds on the accuracy of point-estimation.
The bootstrap estimator of \cref{thm:bootstrap} closes most of the gap when
$\vpi$ is uniform.
Another interesting direction is to reduce the computation cost: the current
cubic cost in the number of states can be too high even when the number of
states is only moderately large.

Perhaps more important, however, is to extend our results to large state space
Markov chains.
In most practical applications the state space is continuous or is
exponentially large in some natural parameters.
To subvert our lower bounds, we must restrict attention to Markov chains with
additional structure.
Parametric classes, such as Markov chains with factored transition kernels with
a few factors, are promising candidates for such future investigations.
The results presented here are a first step in the ambitious research agenda
outlined above, and we hope that they will serve as a point of departure for
further insights on the topic of fully empirical estimation of Markov chain
parameters based on a single sample path.

\printbibliography
\end{document}